\begin{document}
\newtheorem{The}{Theorem}[section]

\newcommand{\K}{\mathbb{Q}(\sqrt{d},i)}
\newcommand{\q}{\mathbb{Q}(\sqrt{d})/\mathbb{Q}}
\newcommand{\qq}{\mathbb{Q}(\sqrt{d})}
\newcommand{\pro}{\prod_{p/\Delta_K}e(p)}
\newtheorem{lem}[The]{Lemma}
\newtheorem{propo}[The]{Proposition}
\newtheorem{coro}[The]{Corollary}
\newtheorem{proprs}[The]{properties}
\newtheorem*{mainthem}{Theorem}
\theoremstyle{remark}
\newtheorem{rema}[The]{\bf Remark}
\newtheorem{remas}[The]{\bf Remarks}
\newtheorem{exam}[The]{\textbf{Numerical Example}}
\newtheorem{exams}[The]{\textbf{Numerical Examples}}
\newtheorem{df}[The]{definition}
\newtheorem{dfs}[The]{definitions}
\def\NN{\mathds{N}}
\def\RR{\mathbb{R}}
\def\HH{I\!\! H}
\def\QQ{\mathbb{Q}}
\def\CC{\mathbb{C}}
\def\ZZ{\mathbb{Z}}
\def\OO{\mathcal{O}}
\def\kk{\mathds{k}}
\def\KK{\mathbb{K}}
\def\FF{\mathbb{F}}
\def\ho{\mathcal{H}_0^{\frac{h(d)}{2}}}
\def\LL{\mathbb{L}}
\def\o{\mathcal{H}_0}
\def\h{\mathcal{H}_1}
\def\hh{\mathcal{H}_2}
\def\hhh{\mathcal{H}_3}
\def\hhhh{\mathcal{H}_4}
\def\k{\mathds{k}^{(*)}}
\def\G{\mathds{k}^{(*)}}
\def\l{\mathds{L}}
\def\L{\kk_2^{(2)}}
\def\kkk{k^{(*)}}
\def\Q{\mathcal{Q}}
\def\D{\kk_2^{(1)}}
\title[Capitulation in the absolutely abelian  extensions...]{Capitulation in the absolutely abelian  extensions of some number fields II}
\author[A. Azizi]{Abdelmalek Azizi}
\address{Abdelmalek Azizi: Mohammed First University, Mathematics Department, Sciences Faculty, Oujda, Morocco }
\email{abdelmalekazizi@yahoo.fr}
\author[A. Zekhnini]{Abdelkader Zekhnini}
\address{Abdelkader Zekhnini: Mohammed First University, Mathematics Department, Pluridisciplinary faculty, Nador, Morocco}
\email{zekha1@yahoo.fr}
\author[M. Taous]{Mohammed Taous}
\address{Mohammed Taous: Moulay Ismail University, Mathematics Department, Sciences and Techniques Faculty, Errachidia, Morocco.}
\email{taousm@hotmail.com}

\subjclass[2010]{11R11, 11R16, 11R20, 11R27, 11R29}
\keywords{absolute and relative genus fields, fundamental systems of units, 2-class group, capitulation, quadratic fields, biquadratic fields,  multiquadratic CM-fields}
\maketitle
\selectlanguage{english}
\begin{abstract}
We study the capitulation of $2$-ideal classes of an infinite
family of imaginary  biquadratic number fields consisting of
 fields $\kk =\QQ(\sqrt{pq_1q_2}, i)$, where $i=\sqrt{-1}$ and $q_1\equiv q_2\equiv-p\equiv-1 \pmod 4$
are different primes. For each of the three quadratic extensions $\KK/\kk$
inside the absolute genus field $\kk^{(*)}$ of $\kk$, we compute the capitulation
kernel of $\KK/\kk$. Then we deduce that each strongly ambiguous class of $\kk/\QQ(i)$
capitulates already in $\kk^{(*)}$.
\end{abstract}
\section{\bf Introduction and Notations}\label{sec1}
Let $k$ be an algebraic number field and let $\mathbf{C}l_2(k)$ denote its 2-class group, that is the 2-Sylow
subgroup of the ideal class group, $\mathbf{C}l(k)$,  of $k$.
We denote by  $k^{(*)}$  the absolute genus field of $k$, that is  the maximal abelian unramified extension of
$k$ obtained by composing  $k$ and an abelian extension over $\QQ$.

Suppose $F$ is a finite extension of $k$,
then we say that an ideal class of $k$ capitulates in $F$ if it is in the kernel of the
homomorphism
$$J_F: \mathbf{C}l(k) \longrightarrow \mathbf{C}l(F)$$ induced by extension of ideals from $k$
to $F$. An
important problem in Number Theory is to explicitly determine the kernel of  $J_F$, which is
usually called the capitulation kernel.

 If $F$ is
the  relative genus  field of a cyclic extension $K/k$, which we denote  by $(K/k)^*$ and that is  the maximal unramified extension  of  $K$ which is obtained by composing  $K$ and an abelian extension over $k$, F. Terada  states in  \cite{FT-71} that all the ambiguous ideal classes of $K/k$, which are classes of $K$ fixed under any element of $\mathrm{G}al(K/k)$,  capitulate in
 $(K/k)^*$.

  If $F$ is the  absolute genus  field of an abelian extension $K/\QQ$, then H. Furuya confirms in \cite{Fu-77} that every   strongly ambiguous  class of $K/\QQ$, that is an ambiguous ideal class containing at least one ideal invariant under  $\mathrm{G}al(K/\QQ)$, capitulates in  $F$.

   In this paper, we construct a family of number fields $k$ for which  all the strongly ambiguous  classes of $k/\QQ(i)$ capitulate in  $k^{(*)}\subset (k/\QQ(i))^*$.\par
 Let $\kk=\QQ(\sqrt{d}, i)$ and $\mathds{K}$ be an unramified quadratic extension of $\kk$ that is abelian  over $\QQ$. Denote by $\mathrm{A}m_s(\kk/\QQ(i))$
 the group of the strongly ambiguous classes of  $\kk/\QQ(i)$. In \cite{AZT14-1}, we studied the capitulation problem in the absolutely abelian extensions of $\kk$ for  $d=2pq$ and $p\equiv q\equiv1\pmod4$ are different primes, and in  \cite{AZT14-3}, we  dealt with  the same problem assuming $p\equiv -q\equiv1\pmod4$. In  \cite{AZT14-2, AZTM, AZT-15} and under the assumption $\mathbf{C}l_2(\kk)\simeq (2, 2, 2)$, we studied the capitulation problem of the $2$-ideal classes of $\kk$ in its fourteen unramified extensions, within the first Hilbert $2$-class field of $\kk$, and we gave the abelian type invariants of the $2$-class groups of these fourteen fields. Additionally we determined  the structure of the metabelian Galois group $G=\mathrm{Gal}(\L/\kk)$ of the second Hilbert 2-class field $\L$ of $\kk$.

 Let $q_1\equiv q_2\equiv-p\equiv-1 \pmod 4$ be different primes and  $d=pq_1q_2$. It is the purpose of the present article
to pursue this research project. We will  compute  the capitulation
kernel of $\KK/\kk$ and  we will  deduce  that   $\mathrm{A}m_s(\kk/\QQ(i))\subseteq \ker J_{\k}$. As
an application we will determine these kernels when  $\mathbf{C}l_2(\mathds{k})$ is of type $(2, 2, 2)$.

Let  $k$ be a number field, during this paper, we adopt the following notations:
 \begin{itemize}
   \item $\kappa_{K}$: the capitulation kernel of an unramified extension $K/k$.
   \item $\mathcal{O}_k$: the ring of integers of $k$.
   \item $E_k$: the unit group of $\mathcal{O}_k$.
   \item  $W_k$: the group of roots of unity contained in $k$.
   \item $k^+$: the maximal real subfield of $k$, if it is a CM-field.
   \item $Q_k=[E_k:W_kE_{k^+}]$ is Hasse's unit index, if $k$ is a CM-field.
   \item $q(k/\QQ)=[E_k:\prod_{i=1}^{s} E_{k_i}]$ is the unit index of $k$, if $k$ is multiquadratic, where $k_1, \dots, k_s$  are the  quadratic subfields of $k$.
  \item $k^{(*)}$: the absolute genus field of $k$.
  \item $\mathbf{C}l_2(k)$: the 2-class group of $k$.
  \item $i=\sqrt{-1}$.
  \item $\epsilon_m$: the fundamental unit of $\QQ(\sqrt m)$, if $m>1$ is a square-free integer,  that is a generator (modulo the roots of unity) for the unit group of the ring of integers of $\QQ(\sqrt m)$.
  \item $N(a)$: denotes the absolute norm of a number $a$, i.e., $N_{k/\QQ}(a)$ with $a\in k$ .
  \item $x\pm y$ means $x+y$ or $x-y$ for some numbers $x$ and $y$.
 \end{itemize}
\section{\bf Preliminary results}
Let us  first collect some results that will be useful in what follows.\par
Let $k_j$, $1\leq j\leq 3$, be the three real quadratic subfields of a biquadratic  real number field $K_0$
and $\epsilon_j>1$ be the fundamental unit of $k_j$. Since $$\alpha^2N_{K_0/{\bf Q}}(\alpha) =\prod_{j=1}^3N_{K_0/k_j}(\alpha)$$ for any $\alpha\in K_0$,
the square of any unit of $K_0$ is in the group generated by the $\epsilon_j$'s, $ 1\leq j\leq 3$.
Hence, to determine a fundamental system of units of $K_0$ it suffices to determine which of the units in
$B:=\{\epsilon_1, \epsilon_2, \epsilon_3, \epsilon_1\epsilon_2, \epsilon_1\epsilon_3, \epsilon_2\epsilon_3, \epsilon_1\epsilon_2\epsilon_3\}$
are  squares in $K_0$ (for details see \cite{Wa-66} or \cite{Lo-00}).
\begin{lem}[\cite{Wa-66}]\label{2}
A fundamental system of units of $K_0$ consists of three positive units chosen among $$B':=B\cup\{\sqrt \eta\ |\  \eta\in B \text{ and }\sqrt \eta\in K_0\}.$$
\end{lem}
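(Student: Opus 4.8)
The plan is to combine Dirichlet's unit theorem with the displayed norm identity $\alpha^2 N_{K_0/\QQ}(\alpha)=\prod_{j=1}^3 N_{K_0/k_j}(\alpha)$. Since $K_0$ is totally real biquadratic it has four real and no complex places, so by Dirichlet the unit rank equals $3$; as the only roots of unity in a real field are $\pm1$, the quotient $\overline{E}_{K_0}:=E_{K_0}/\{\pm1\}$ is free abelian of rank $3$, and a fundamental system of units is precisely a $\ZZ$-basis of $\overline{E}_{K_0}$ made of positive representatives. The subgroup $W:=\langle\epsilon_1,\epsilon_2,\epsilon_3\rangle$ is free of rank $3$ as well: a relation $\epsilon_1^{a}\epsilon_2^{b}\epsilon_3^{c}=\pm1$ is killed by applying the three nontrivial elements of $\mathrm{Gal}(K_0/\QQ)$, each of which inverts (up to sign) exactly two of the $\epsilon_j$ while fixing the third, forcing $a=b=c=0$. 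Thus $\overline{W}:=W\{\pm1\}/\{\pm1\}$ is a finite-index sublattice of $\overline{E}_{K_0}$.

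First I would pin down the quotient $\overline{E}_{K_0}/\overline{W}$. For a unit $\alpha$ one has $N_{K_0/\QQ}(\alpha)=\pm1$ and each $N_{K_0/k_j}(\alpha)\in E_{k_j}=\langle-1,\epsilon_j\rangle$, so the norm identity gives $\alpha^2=\pm\epsilon_1^{a}\epsilon_2^{b}\epsilon_3^{c}\in\langle-1\rangle W$. Hence $E_{K_0}^2\subseteq\langle-1\rangle W$, so $\overline{E}_{K_0}/\overline{W}$ is an elementary abelian $2$-group, isomorphic to $(\ZZ/2\ZZ)^t$ with $0\le t\le3$.

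Next I would identify the nontrivial coset representatives with the square roots appearing in $B'$. Given a nontrivial coset, pick $\alpha$ with $\alpha^2=\pm\epsilon_1^{a}\epsilon_2^{b}\epsilon_3^{c}$; replacing $\alpha$ by $\alpha\,\epsilon_1^{-\lfloor a/2\rfloor}\epsilon_2^{-\lfloor b/2\rfloor}\epsilon_3^{-\lfloor c/2\rfloor}$ does not change its class modulo $\overline{W}$ and reduces the exponents to $\{0,1\}$, so after this normalisation $\alpha^2=\pm\eta$ with $\eta\in B$. Because $K_0$ is totally real, $\alpha$ is real in every embedding and $\alpha^2$ is totally positive, which fixes the sign and shows that the normalised representative is exactly a square root $\sqrt\eta\in K_0$ of some $\eta\in B$, hence lies in $B'$. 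Consequently $\overline{E}_{K_0}$ is generated by the images of $\epsilon_1,\epsilon_2,\epsilon_3$ together with finitely many such square roots, all of which belong to $B'$.

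Finally I would extract the actual fundamental system, and this is where the main obstacle lies: one must select three elements of $B'$ whose images form a $\ZZ$-basis of $\overline{E}_{K_0}$ (index exactly $1$), not merely a full-rank sublattice. Concretely, I would run an elementary-divisor reduction on the finite group $\overline{E}_{K_0}/\overline{W}\cong(\ZZ/2\ZZ)^t$: adapting a basis to this elementary abelian $2$-group lets me trade, one at a time, each generator that "becomes a square" for the corresponding $\sqrt\eta$, each such replacement strictly enlarging the lattice until index $1$ is reached, after which I adjust signs so the three chosen units are positive. The delicate point is precisely this bookkeeping, namely verifying that the square roots produced are mutually compatible so that the successive replacements stay multiplicatively independent and genuinely close the index gap; by contrast, the rank computation and the norm-identity inclusion $E_{K_0}^2\subseteq\langle-1\rangle W$ are routine.
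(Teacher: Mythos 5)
The paper offers no proof of this lemma at all: it is quoted from Wada \cite{Wa-66}, and the only in-text justification is the norm-identity remark preceding it, which your first two paragraphs reproduce and your third paragraph correctly extends (unit rank $3$, freeness of $W=\langle\epsilon_1,\epsilon_2,\epsilon_3\rangle$ via the Galois action, $E_{K_0}^2\subseteq\langle-1\rangle W$, and the identification of nontrivial cosets of $\overline{W}$ with square roots $\sqrt\eta$, $\eta\in B$). The genuine gap is exactly the step you flag and then leave as ``bookkeeping'': producing three elements of $B'$ that generate \emph{all} of $\overline{E}_{K_0}$, not merely a full-rank or finite-index subgroup. Your trading procedure, as described, does not do this. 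Two things are unaddressed: first, when you adjoin $\sqrt\eta$ you must discard a generator that occurs in $\eta$ with exponent exactly $\pm1$ (discarding a generator absent from $\eta$ means the new triple no longer contains the old lattice, so the claim that each replacement ``strictly enlarges'' fails without this choice); second, and more fundamentally, reaching index one at the end requires knowing that the elements of $B$ underlying your final triple generate $W$ on the nose and not a proper sublattice of odd index --- a condition invisible to any argument carried out modulo squares, which is where your elementary-divisor reduction lives.

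Here is the missing argument, and why it genuinely uses the fact that exponents lie in $\{0,1\}$. Let $V=W/W^2\cong(\ZZ/2\ZZ)^3$ and let $U\subseteq V$ correspond to $\overline{E}_{K_0}/\overline{W}$ under $\alpha\mapsto[\alpha^2]$; your third paragraph essentially shows that the nonzero elements of $U$ are precisely the classes $[\eta]$ with $\eta\in B$ and $\sqrt\eta\in K_0$. Choose $\eta_1,\dots,\eta_t\in B$ whose classes form a basis of $U$ and extend by $\eta_{t+1},\dots,\eta_3\in B$ to a basis of $V$. The group $M$ generated by $-1$, $\sqrt{\eta_1},\dots,\sqrt{\eta_t}$, $\eta_{t+1},\dots,\eta_3$ contains $N:=\langle\eta_1,\eta_2,\eta_3\rangle$, and one must show $N=W$: only then does $M\supseteq W$, whereupon $M=E_{K_0}$ because the images of the $\sqrt{\eta_i}$ generate $\overline{E}_{K_0}/\overline{W}\cong U$. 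Now $[W:N]$ equals the absolute value of the determinant of the exponent matrix of the $\eta_i$; this matrix has entries in $\{0,1\}$ and is invertible mod $2$, and a $3\times3$ zero-one matrix has determinant of absolute value at most $2$, so the determinant is $\pm1$ and $N=W$. Without this observation (or an equivalent use of the $\{0,1\}$ structure) the conclusion is false for general lattice sandwiches: if one could have, say, $\eta_1=\epsilon_1^3$, its class is still a legitimate basis vector of $V$, yet $N$ has index $3$ in $W$ and $M/\!\pm\! N$ is a $2$-group, so $\epsilon_1\notin M$ and no sequence of square-root adjunctions ever repairs this. That is precisely the failure mode your sketch needs to, but does not, exclude.
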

\begin{lem}[\cite{Lo-00}]\label{4}
The units   $\epsilon\in B$ that can be squares in $K_0$ are as follows:
\begin{enumerate}[\rm1.]
\item $\epsilon=\epsilon_j$ and $N(\epsilon_j)=1$ with $1\leq j\leq 3$,
\item $\epsilon=\epsilon_j\epsilon_l$ and $N(\epsilon_j)=N(\epsilon_l)=1$ with $1\leq j\neq l\leq 3$,
\item $\epsilon=\epsilon_1\epsilon_2\epsilon_3$ and $N(\epsilon_1)=N(\epsilon_2)=N(\epsilon_3)$.
\end{enumerate}
\end{lem}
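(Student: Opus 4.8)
The plan is to reduce the statement to a signature (total positivity) computation. Since $K_0$ is totally real, any square $\xi^2$ with $\xi\in K_0$ is totally positive, and a unit that is a square in $K_0$ has a square root that is again a unit (it is an algebraic integer whose square is a unit). Hence a \emph{necessary} condition for $\epsilon\in B$ to be a square in $K_0$ is that $\epsilon$ be totally positive. I claim that this single condition already cuts $B$ down to exactly the list (1)--(3), so no converse is needed and the whole argument is a sign count.

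First I would fix $K_0=\QQ(\sqrt a,\sqrt b)$ with $k_1=\QQ(\sqrt a)$, $k_2=\QQ(\sqrt b)$, $k_3=\QQ(\sqrt{ab})$, and index the four real embeddings of $K_0$ by the sign pair $(\mathrm{sgn}\,\sqrt a,\mathrm{sgn}\,\sqrt b)\in\{+,-\}^2$, ordered $(++,+-,-+,--)$. For each $j$ I would then record the signature of $\epsilon_j$ across these four embeddings. Writing $s_j:=\mathrm{sgn}\,N(\epsilon_j)$ and using $\epsilon_j>1$ together with $\bar\epsilon_j=N(\epsilon_j)/\epsilon_j$, the unit $\epsilon_j$ is positive at an embedding that fixes $k_j$ and has sign $s_j$ at the other embeddings. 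Since $\mathrm{sgn}\,\sqrt{ab}=\mathrm{sgn}\,\sqrt a\cdot\mathrm{sgn}\,\sqrt b$, this yields the signature vectors $(+,+,s_1,s_1)$ for $\epsilon_1$, $(+,s_2,+,s_2)$ for $\epsilon_2$, and $(+,s_3,s_3,+)$ for $\epsilon_3$.

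Then I would run through the seven elements of $B$, multiplying the relevant signature vectors componentwise and demanding that the result be $(+,+,+,+)$. A single $\epsilon_j$ forces $s_j=+$, giving case (1); a pair $\epsilon_j\epsilon_l$ forces $s_j=s_l=+$, giving case (2); and $\epsilon_1\epsilon_2\epsilon_3$ has signature $(+,s_2s_3,s_1s_3,s_1s_2)$, which is totally positive exactly when $s_1=s_2=s_3$, i.e.\ $N(\epsilon_1)=N(\epsilon_2)=N(\epsilon_3)$, giving case (3). The only delicate point is the bookkeeping for $\epsilon_3$: because it lives in $\QQ(\sqrt{ab})$ its sign is governed by the \emph{product} of the two chosen signs, and it is precisely this coupling that ties the three norm conditions together in the triple-product case. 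Everything else is a routine componentwise multiplication of sign vectors.
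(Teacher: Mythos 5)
Your proof is correct, but there is nothing in the paper to compare it against: the paper does not prove this lemma at all, it quotes it from Louboutin's article \cite{Lo-00} and uses it as a black box (e.g., in the proof of Proposition 3.1, where $N(\epsilon_p)=-1$ is used to rule out any element of $B$ involving $\epsilon_p$). So what you have produced is an independent, self-contained proof, and it is sound. Your two structural observations are exactly right: first, the lemma only asserts a \emph{necessary} condition (``can be squares''), which is also the only way the paper ever invokes it, so the total-positivity test suffices and no converse is needed; second, since $K_0$ is totally real, a square in $K_0$ is totally positive, and the signatures of the fundamental units at the four embeddings indexed by $\left(\mathrm{sgn}\,\sqrt a,\ \mathrm{sgn}\,\sqrt b\right)$ are $(+,+,s_1,s_1)$, $(+,s_2,+,s_2)$, $(+,s_3,s_3,+)$, because $\epsilon_j>1$ is fixed by the embeddings fixing $k_j$ and is sent to $N(\epsilon_j)/\epsilon_j$ by the others, with the sign of $\sqrt{ab}$ governed by the product of the two chosen signs. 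The componentwise multiplication then gives total positivity precisely in cases (1)--(3); in particular the triple product has signature $(+,s_2s_3,s_1s_3,s_1s_2)$, which is totally positive iff $s_1=s_2=s_3$, correctly allowing the possibility that all three norms equal $-1$ --- this is why item 3 of the lemma reads $N(\epsilon_1)=N(\epsilon_2)=N(\epsilon_3)$ rather than requiring all norms to be $1$. The trade-off relative to the paper's treatment is clear: the citation is economical and defers to the literature, while your argument makes the statement self-contained and exposes the content of the lemma as nothing more than the observation that its norm conditions coincide with the total-positivity conditions on the seven units of $B$.
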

Put  $K=K_0(i)$,  then  to determine a  fundamental system of units  of $K$,  we will use the following result   that the second author has deduced from a theorem of Hasse \cite[\S 21, Satz 15 ]{Ha-52}.
\begin{lem}\cite[p.18]{Az-99}.\label{3}
Let $n\geq2$ be an integer   and $\xi_n$ a $2^n$-th primitive root of unity, then
$$\begin{array}{lllr}\xi_n = \dfrac{1}{2}(\mu_n + \lambda_ni), &\hbox{ where }&\mu_n =\sqrt{2 +\mu_{n-1}},&\lambda_n =\sqrt{2 -\mu_{n-1}}, \\
                               &    & \mu_2=0, \lambda_2=2 &\hbox{ and\quad } \mu_3=\lambda_3=\sqrt 2.
                                              \end{array}
                                            $$
Let $n_0$ be the greatest integer such that  $\xi_{n_0}$ is contained in $K$, $\{\epsilon'_1, \epsilon'_2, \epsilon'_3\}$ a  fundamental system of units  of $K_0$ and $\epsilon$ a unit of  $K_0$ such that $(2 + \mu_{n_0})\epsilon$ is a square in  $K_0$ $($if it exists$)$. Then a  fundamental system of units  of  $K$ is one of the following systems:
\begin{enumerate}[\rm1.]
\item $\{\epsilon'_1, \epsilon'_2, \epsilon'_3\}$ if $\epsilon$ does not exist,
\item $\{\epsilon'_1, \epsilon'_2, \sqrt{\xi_{n_0}\epsilon}\}$ if $\epsilon$
exists;  in this case $\epsilon = {\epsilon'_1}^{i_1}{
\epsilon'_2}^{i_2}\epsilon'_3$, where $i_1$, $i_2\in \{0, 1\}$ $($up to a permutation$)$.
\end{enumerate}
\end{lem}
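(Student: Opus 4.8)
The plan is to exploit the fact that $K=K_0(i)$ is a CM-field whose maximal real subfield is exactly $K^+=K_0$, so that the entire question is governed by Hasse's unit index $Q_K=[E_K:W_KE_{K_0}]$, which is the input supplied by the cited theorem of Hasse \cite[\S 21, Satz 15]{Ha-52}. Since $K_0$ is totally real of degree $4$ and $K$ is totally imaginary of degree $8$, both have unit rank $3$, so $[E_K:E_{K_0}]$ is finite. First I would reprove the dichotomy $Q_K\in\{1,2\}$ intrinsically by studying the homomorphism $\phi\colon E_K\to W_K$, $\phi(\eta)=\eta/\overline{\eta}$, where the bar is complex conjugation (the nontrivial element of $\mathrm{Gal}(K/K_0)$). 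One checks $\ker\phi=E_K\cap K_0=E_{K_0}$ and $\phi(\zeta)=\zeta/\zeta^{-1}=\zeta^2$ for $\zeta\in W_K$, so $\phi(W_K)=W_K^2$ and hence $E_K/E_{K_0}\cong\mathrm{Im}(\phi)$ is squeezed between $W_K^2$ and $W_K$. Comparing with $[W_KE_{K_0}:E_{K_0}]=[W_K:\{\pm1\}]=|W_K|/2$ gives $Q_K=2$ precisely when $\phi$ is onto $W_K$.

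The second step produces the extra unit explicitly. Since $n_0\ge 2$ (because $i=\xi_2\in K$) and $\xi_{n_0}\in W_K$, surjectivity of $\phi$ yields $\eta\in E_K$ with $\eta/\overline{\eta}=\xi_{n_0}$. Setting $\epsilon:=\eta\overline{\eta}\in E_{K_0}$, which is totally positive, we obtain
$$\eta^2=(\eta/\overline{\eta})(\eta\overline{\eta})=\xi_{n_0}\,\epsilon.$$
Conversely, if $\xi_{n_0}\epsilon=\eta^2$ for some $\epsilon\in E_{K_0}$ and $\eta\in K$, then $(\eta/\overline{\eta})^2=\xi_{n_0}^2$ forces $\eta/\overline{\eta}=\pm\xi_{n_0}\notin W_K^2$, so $\mathrm{Im}(\phi)=W_K$ and $Q_K=2$. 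Thus $Q_K=2$ if and only if $\xi_{n_0}\epsilon$ is a square in $K$ for some $\epsilon\in E_{K_0}$, and the required extra unit is $\eta=\sqrt{\xi_{n_0}\epsilon}$.

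The crux is to convert ``$\xi_{n_0}\epsilon$ is a square in $K$'' into the $K_0$-condition of the statement. For this I would use the identity
$$(1+\xi_{n_0})^2=(2+\mu_{n_0})\,\xi_{n_0},$$
which is immediate from $\mu_{n_0}=\xi_{n_0}+\xi_{n_0}^{-1}=2\,\mathrm{Re}(\xi_{n_0})$, together with the fact that $2+\mu_{n_0}=2+\xi_{n_0}+\xi_{n_0}^{-1}\in K_0$ is totally positive (equal to $4\cos^2(\pi/2^{n_0})$ at the principal embedding). Given $\epsilon\in E_{K_0}$, this shows the equivalence: if $(2+\mu_{n_0})\epsilon=\beta^2$ with $\beta\in K_0$, then $\xi_{n_0}\epsilon=\bigl(\beta(1+\xi_{n_0})/(2+\mu_{n_0})\bigr)^2$ is a square in $K$; conversely, if $\xi_{n_0}\epsilon=\eta^2$ then $\bigl(\eta(1+\xi_{n_0})/\xi_{n_0}\bigr)^2=(2+\mu_{n_0})\epsilon$ is totally positive, so the square root $\eta(1+\xi_{n_0})/\xi_{n_0}$ lies in $K_0$ rather than $iK_0$, exhibiting $(2+\mu_{n_0})\epsilon$ as a square in $K_0$. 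Combined with the previous step, $Q_K=2$ exactly when an $\epsilon$ as in the statement exists; this is case $(1)$ when it does not.

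Finally, to pin down the fundamental system in case $(2)$, I would note that the square condition on $(2+\mu_{n_0})\epsilon$ depends only on the class of $\epsilon$ modulo $E_{K_0}^2$, so using $E_{K_0}=\langle-1,\epsilon'_1,\epsilon'_2,\epsilon'_3\rangle$ one may take $\epsilon={\epsilon'_1}^{i_1}{\epsilon'_2}^{i_2}{\epsilon'_3}^{i_3}$ with $i_j\in\{0,1\}$. Not all $i_j$ vanish, for otherwise $\eta\in W_KE_{K_0}$, contradicting $Q_K=2$; after a permutation of the $\epsilon'_j$ we arrange $i_3=1$. The relation $\eta^2=\xi_{n_0}{\epsilon'_1}^{i_1}{\epsilon'_2}^{i_2}\epsilon'_3$ then makes $\epsilon'_3$ redundant modulo $W_K$, so $\{\epsilon'_1,\epsilon'_2,\eta\}$ is a $\mathbb{Z}$-basis of $E_K/W_K$, i.e.\ a fundamental system of units of $K$, which is case $(2)$. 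The main obstacle I anticipate is not any single computation but the careful root-of-unity bookkeeping: correctly identifying $\xi_{n_0}$ via the maximality of $n_0$, absorbing the odd part of $W_K$ (whose elements are squares) so that one may take $\phi(\eta)=\xi_{n_0}$ exactly, and controlling signs and total positivity in the square-descent from $K$ down to $K_0$.
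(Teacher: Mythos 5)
The paper itself contains no proof of this lemma: it is quoted verbatim from \cite{Az-99} (p.~18), where it is deduced from Hasse's Satz~15 on the unit index $Q_K=[E_K:W_KE_{K_0}]$ of a CM-field. Your proposal is a correct, self-contained reconstruction of exactly that chain of reasoning, so in substance it is the same approach as the cited source rather than a new one: the map $\phi(\eta)=\eta/\overline{\eta}$ with $\ker\phi=E_{K_0}$ and $W_K^2\subseteq\operatorname{Im}\phi\subseteq W_K$ is the standard proof that $Q_K\in\{1,2\}$, with $Q_K=2$ precisely when $\phi$ is onto; the identity $(1+\xi_{n_0})^2=(2+\mu_{n_0})\,\xi_{n_0}$ is precisely what converts ``$\xi_{n_0}\epsilon$ is a square in $K$'' into the real criterion ``$(2+\mu_{n_0})\epsilon$ is a square in $K_0$''; and the index-$2$ argument that, after permuting so that $i_3=1$, the set $\{\epsilon'_1,\epsilon'_2,\sqrt{\xi_{n_0}\epsilon}\}$ generates $E_K$ modulo $W_K$ is sound, since three generators of the free rank-$3$ group $E_K/W_K$ form a basis.

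One step is asserted more strongly than you justify it, and as literally stated it is false: in the descent direction you claim $(2+\mu_{n_0})\epsilon$ ``is totally positive'', hence its square root lies in $K_0$ rather than $iK_0$. But if $\xi_{n_0}\epsilon=\eta^2$ then also $\xi_{n_0}(-\epsilon)=(i\eta)^2$, so both $\pm\epsilon$ satisfy your hypothesis while only one of them can make $(2+\mu_{n_0})\epsilon$ totally positive. The repair is short: from $\overline{\eta}^2=\xi_{n_0}^{-1}\epsilon$ one gets $(\eta\overline{\eta})^2=\epsilon^2$, so $\epsilon=\pm\eta\overline{\eta}$ is either totally positive or totally negative; replacing $(\epsilon,\eta)$ by $(-\epsilon,i\eta)$ if necessary, you may assume $\epsilon=\eta\overline{\eta}$ is totally positive, and then the square-root argument placing $\gamma=\eta(1+\xi_{n_0})/\xi_{n_0}$ in $K_0$ goes through. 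The same normalization (multiplying $\eta$ by $i$ to absorb a sign) is what legitimizes taking $\epsilon={\epsilon'_1}^{i_1}{\epsilon'_2}^{i_2}{\epsilon'_3}^{i_3}$ without a $-1$ in front when you reduce modulo $E_{K_0}^2$. This is exactly the ``sign and total-positivity bookkeeping'' you flagged as the anticipated obstacle; with it spelled out, your proof is complete.
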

\begin{lem}[\cite{Az-00}, {Lemma 5}]\label{1:046}
Let $d>1$ be a square-free integer and $\epsilon_d=x+y\sqrt d$,
where $x$, $y$ are  integers or semi-integers. If $N(\epsilon_d)=1$, then $2(x+1)$, $2(x-1)$, $2d(x+1)$ and
$2d(x-1)$ are not squares in  $\QQ$.
\end{lem}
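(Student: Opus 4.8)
The plan is to argue by contradiction, using that $\epsilon_d$ is by definition the fundamental unit of $\qq$, so that $\sqrt{\epsilon_d}\notin\qq$: a square root of $\epsilon_d$ lying in $\qq$ would be a unit strictly between $1$ and $\epsilon_d$, contradicting minimality. Throughout I would use the norm hypothesis $N(\epsilon_d)=x^2-dy^2=1$ in the equivalent form $(x-1)(x+1)=dy^2$. Since $\epsilon_d>1$ and $y\neq0$, one has $y>0$ and, from $x^2=1+dy^2$ with $d\geq2$, also $x>1$; hence $x-1$ and $x+1$ are positive and the real numbers $\sqrt{2(x+1)}$ and $\sqrt{2(x-1)}$ are defined.

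First I would record two identities. Squaring gives
\[
\bigl(\sqrt{2(x+1)}+\sqrt{2(x-1)}\bigr)^2=4x+2\sqrt{4(x^2-1)}=4x+4y\sqrt d=4\epsilon_d,
\]
so that $\sqrt{\epsilon_d}=\tfrac12\bigl(\sqrt{2(x+1)}+\sqrt{2(x-1)}\bigr)$; and the two factors satisfy the product relation
\[
\sqrt{2(x+1)}\cdot\sqrt{2(x-1)}=\sqrt{4(x^2-1)}=2y\sqrt d\in\qq .
\]

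Next I would assume for contradiction that one of the four numbers $2(x+1)$, $2(x-1)$, $2d(x+1)$, $2d(x-1)$ is a square in $\QQ$, and show that in every case one of the two factors $\sqrt{2(x+1)}$, $\sqrt{2(x-1)}$ already lies in $\qq$. Indeed, if $2(x+1)$ or $2(x-1)$ is a rational square then the corresponding root lies in $\QQ\subset\qq$; while if $2d(x+1)$ or $2d(x-1)$ is a rational square then $\sqrt{2(x+1)}=\sqrt{2d(x+1)}/\sqrt d$ or $\sqrt{2(x-1)}=\sqrt{2d(x-1)}/\sqrt d$ lies in $\qq$. By the product relation the remaining square root is then $2y\sqrt d$ divided by a nonzero element of $\qq$, hence it too lies in $\qq$. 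Thus in all four cases $\sqrt{\epsilon_d}\in\qq$.

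Finally I would convert this into the contradiction. Let $\mathcal{O}$ be the ring of integers of $\qq$. The element $\beta=\sqrt{\epsilon_d}$ is a root of the monic polynomial $X^2-\epsilon_d\in\mathcal{O}[X]$, hence is integral over $\mathcal{O}$; as $\mathcal{O}$ is integrally closed in $\qq$ and $\beta\in\qq$, we get $\beta\in\mathcal{O}$, and since $\beta^2=\epsilon_d$ is a unit, $\beta$ is a unit. Taking $\beta>0$ yields a unit with $1<\beta<\epsilon_d$, contradicting the minimality of the fundamental unit (equivalently, $\beta=\pm\epsilon_d^{\,m}$ would force $\epsilon_d^{\,2m-1}=1$, which is impossible). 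I expect the only delicate points to be the case bookkeeping — tracking which of $\sqrt{2(x\pm1)}$ lands in $\qq$ for each of the four numbers — and this last step, where one must know that $\sqrt{\epsilon_d}$ is a genuine unit; the latter rests on the integral closedness of $\mathcal{O}$ and goes through uniformly whether $x,y$ are integers or semi-integers, since the whole argument takes place inside $\qq$. This proves Lemma~\ref{1:046}.
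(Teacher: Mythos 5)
Your proof is correct. The paper does not actually prove this lemma — it is quoted as Lemma 5 of \cite{Az-00} — and your argument is essentially the one in that cited source: from $x^2-1=dy^2$ one gets $2\sqrt{\epsilon_d}=\sqrt{2(x+1)}+\sqrt{2(x-1)}$ with $\sqrt{2(x+1)}\cdot\sqrt{2(x-1)}=2y\sqrt d$, so if any of the four quantities were a rational square then $\sqrt{\epsilon_d}$ would lie in $\QQ(\sqrt d)$ and would be a unit there, contradicting the minimality of the fundamental unit $\epsilon_d$.
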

\begin{lem}[\cite{Az-00}, {Lemma 6}]\label{1:048}
Let $q\equiv-1\pmod4$ be a prime and $\epsilon_q=x+y\sqrt q$ be the fundamental unit of  $\QQ(\sqrt q)$. Then $x$ is an even integer, $x\pm1$ is a square in  $\NN$ and $2\epsilon_q$ is a square in $\QQ(\sqrt q)$.
\end{lem}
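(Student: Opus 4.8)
The plan is to reduce everything to the Pell-type equation satisfied by $\epsilon_q$ and then exploit the elementary divisibility structure of its coefficients, with Lemma~\ref{1:046} doing the decisive work.

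First I would fix the integral structure. Since $q\equiv 3\pmod 4$ is prime, the ring of integers of $\QQ(\sqrt q)$ is $\ZZ[\sqrt q]$, so in $\epsilon_q=x+y\sqrt q$ the coordinates $x,y$ are rational integers. Next I would show $N(\epsilon_q)=1$: if instead $N(\epsilon_q)=-1$, then $x^2-qy^2=-1$, whence $x^2\equiv-1\pmod q$ and $-1$ is a quadratic residue modulo $q$; by Euler's criterion this forces $q\equiv1\pmod4$, contradicting the hypothesis. Thus $x^2-qy^2=1$, that is, $(x-1)(x+1)=qy^2$.

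The heart of the argument is the parity of $x$, and here I would invoke Lemma~\ref{1:046}. Suppose, for contradiction, that $x$ is odd. Then $x-1$ and $x+1$ are both even, say $x-1=2m$ and $x+1=2n$ with $n=m+1$, so $\gcd(m,n)=1$; substituting gives $4mn=qy^2$, and since $q$ is odd we get $2\mid y$, say $y=2w$, hence $mn=qw^2$. As $\gcd(m,n)=1$ and $q$ is prime, exactly one of $m,n$ is a perfect square while the other is $q$ times a perfect square. In either case one of $x-1,x+1$ equals twice a square, so one of $2(x-1),2(x+1)$ is a perfect square in $\QQ$. Because $N(\epsilon_q)=1$, this contradicts Lemma~\ref{1:046}. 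Therefore $x$ is even. I expect this step to be the main obstacle, precisely because the conclusion is false without the fundamentality of $\epsilon_q$ --- and it is exactly that minimality which Lemma~\ref{1:046} packages for us.

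Finally I would read off the remaining claims. Since $x$ is even, $x-1$ and $x+1$ are odd and coprime, so from $(x-1)(x+1)=qy^2$ with $q$ prime, one of them is a perfect square $u^2$ and the other is $qv^2$, with $y=uv$; this is exactly the assertion that $x\pm1$ is a square in $\NN$. Setting $\alpha=u+v\sqrt q$ and using $u^2+qv^2=(x-1)+(x+1)=2x$ together with $2uv=2y$, I obtain $\alpha^2=(u^2+qv^2)+2uv\sqrt q=2x+2y\sqrt q=2\epsilon_q$, so $2\epsilon_q$ is a square in $\QQ(\sqrt q)$ (indeed in $\ZZ[\sqrt q]$). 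This establishes all three conclusions simultaneously.
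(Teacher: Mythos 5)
Your proof is correct. The paper does not prove this lemma at all --- it quotes it from \cite{Az-00} (Lemma 6) --- so there is no internal argument to compare against; your route is the standard one and surely the one in the cited source: integrality of $x,y$ since $q\not\equiv1\pmod4$, the norm forced to be $+1$ because $-1$ is a quadratic non-residue modulo $q\equiv3\pmod4$, the factorization $(x-1)(x+1)=qy^2$, the use of Lemma \ref{1:046} to exclude the decomposition $x\pm1=2u^2$, $x\mp1=2qv^2$ that odd $x$ would force, and then the coprime splitting $x\pm1=u^2$, $x\mp1=qv^2$ yielding $2\epsilon_q=(u+v\sqrt q)^2$. Every step checks out, including the gcd bookkeeping and the degenerate case $x=1$.
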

\begin{lem}[\cite{Az-99}, {3.(1) p.19}]\label{6}
Let $d>2$ be a square-free integer and $k=\QQ(\sqrt d,i)$, put $\epsilon_d=x+y\sqrt d$.
\begin{enumerate}[\rm\indent1.]
  \item If $N(\epsilon_d)=-1$, then $\{\epsilon_d\}$ is a  fundamental system of units  of $k$.
  \item If $N(\epsilon_d)=1$, then $\{\sqrt{i\epsilon_d}\}$ is a fundamental system of units  of $k$ if
   and only if $x\pm1$ is a square in $\NN$ i.e. $2\epsilon_d$ is a square in $\QQ(\sqrt d)$. Else $\{\epsilon_d\}$ is  a  fundamental system of units  of $k$.
\end{enumerate}
\noindent This result is also in \cite{Kub-56}.
\end{lem}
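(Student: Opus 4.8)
The plan is to treat $\kk=\QQ(\sqrt d,i)$ as a CM-field with maximal real subfield $k^{+}=\QQ(\sqrt d)$ and to read off its single fundamental unit from the Hasse unit index $Q_{\kk}=[E_{\kk}:W_{\kk}E_{k^{+}}]$. First I would record that $\kk$ has signature $(0,2)$, so by Dirichlet its unit rank is $1$ and a fundamental system of units is a single unit; everything therefore reduces to deciding whether that unit is $\epsilon_d$ or a ``half'' of it. Writing $c$ for the nontrivial automorphism of $\kk/k^{+}$ (complex conjugation), I would introduce the homomorphism $\phi\colon E_{\kk}\to W_{\kk}$, $\phi(\eta)=\eta/c(\eta)$, which lands in $W_{\kk}$ because $\eta/c(\eta)$ has absolute value $1$ at every archimedean place (Kronecker's theorem). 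Since $\ker\phi=E_{k^{+}}$ and $\phi(W_{\kk})=W_{\kk}^{2}$, one gets $Q_{\kk}=[\phi(E_{\kk}):W_{\kk}^{2}]$, which is $1$ or $2$ as $[W_{\kk}:W_{\kk}^{2}]=2$. The decisive elementary observation, used throughout, is that for square-free $d>2$ one has $\sqrt2\notin\QQ(\sqrt d)$, hence $\zeta_8\notin\kk$ and therefore $i\notin W_{\kk}^{2}$; consequently $W_{\kk}\smallsetminus W_{\kk}^{2}=iW_{\kk}^{2}$, and $Q_{\kk}=2$ exactly when some unit $\eta$ has $\phi(\eta)\notin W_{\kk}^{2}$.

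For part (1), assume $N(\epsilon_d)=-1$. If $Q_{\kk}=2$, a fundamental unit $\eta$ may be normalized so that $\eta^{2}=\zeta\epsilon_d$ with $\zeta\in W_{\kk}$ (the exponent of $\epsilon_d$ must be odd, otherwise $\eta\in W_{\kk}E_{k^{+}}$). Applying $N_{\kk/k^{+}}$ gives $N_{\kk/k^{+}}(\eta)^{2}=\epsilon_d^{2}$, so $N_{\kk/k^{+}}(\eta)=\pm\epsilon_d$. But $N_{\kk/k^{+}}(\eta)$ is totally positive, whereas $N(\epsilon_d)=-1$ forces $\epsilon_d$ to change sign under the two real embeddings of $k^{+}$, so neither $\epsilon_d$ nor $-\epsilon_d$ is totally positive; this is a contradiction. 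Hence $Q_{\kk}=1$ and $\{\epsilon_d\}$ is a fundamental system.

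For part (2), assume $N(\epsilon_d)=1$, so $\epsilon_d$ is totally positive. I would first establish the field-theoretic equivalence that $\sqrt{i\epsilon_d}\in\kk$ if and only if $2\epsilon_d$ is a square in $\QQ(\sqrt d)$: writing a putative square root as $u+vi$ with $u,v\in\QQ(\sqrt d)$ and expanding $(u+vi)^{2}=i\epsilon_d$ yields $u^{2}-v^{2}=0$ and $2uv=\epsilon_d$, whence $u^{2}=v^{2}=\epsilon_d/2$, while conversely $\bigl(\sqrt{\epsilon_d/2}\,(1+i)\bigr)^{2}=i\epsilon_d$. Granting this, the two implications follow from the index computation. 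If $\sqrt{i\epsilon_d}\in\kk$, it is a unit $\eta$ with $\phi(\eta)^{2}=\phi(i)\phi(\epsilon_d)=-1$, so $\phi(\eta)=\pm i\notin W_{\kk}^{2}$, giving $Q_{\kk}=2$; moreover, writing $\epsilon_d=\zeta'\eta_0^{2}$ for a fundamental unit $\eta_0$ shows $(\eta/\eta_0)^{2}\in W_{\kk}$, hence $\eta/\eta_0\in W_{\kk}$ and $\{\sqrt{i\epsilon_d}\}$ is itself a fundamental system. Conversely, if $Q_{\kk}=2$, normalize a fundamental unit to $\eta^{2}=\zeta\epsilon_d$ with $\zeta\in W_{\kk}\smallsetminus W_{\kk}^{2}=iW_{\kk}^{2}$; writing $\zeta=i\mu^{2}$ gives $(\eta/\mu)^{2}=i\epsilon_d$, i.e. $\sqrt{i\epsilon_d}\in\kk$. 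In the remaining case $Q_{\kk}=1$ and $\{\epsilon_d\}$ is a fundamental system.

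It then remains to identify the condition ``$2\epsilon_d$ is a square in $\QQ(\sqrt d)$'' with the arithmetic condition that $x\pm1$ be a square in $\NN$. Writing $2\epsilon_d=(a+b\sqrt d)^{2}$ and using $N(\epsilon_d)=1$, comparison of components gives $a^{2}+db^{2}=2x$ and $a^{2}-db^{2}=\pm2$, hence $a^{2}=x\pm1$ and $db^{2}=x\mp1$; this is exactly the statement that one of $x\pm1$ is a perfect square. The converse — reconstructing $a+b\sqrt d$ from the hypothesis that $x\pm1$ is a square — is the step requiring the most care: from $x^{2}-dy^{2}=1$ one has $(x-1)(x+1)=dy^{2}$, and one must exclude the competing factorizations in which $2(x\pm1)$ or $2d(x\pm1)$ is a square, which is precisely what Lemma \ref{1:046} forbids, and treat the half-integral case $d\equiv1\pmod4$ separately. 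I expect this elementary but delicate descent, rather than the unit-index machinery, to be the main obstacle; the structural part is rigid once the observation $i\notin W_{\kk}^{2}$ is in hand.
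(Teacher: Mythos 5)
This lemma is nowhere proved in the paper: it is quoted from \cite{Az-99} (where it is deduced from Hasse's Satz 15, the same machinery recorded as Lemma \ref{3}) and attributed also to \cite{Kub-56}. Your argument is therefore best compared with that classical route, and it reconstructs it faithfully: your homomorphism $\phi(\eta)=\eta/c(\eta)$ together with $Q_k=[\phi(E_k):W_k^2]\le 2$ is exactly the Hasse unit index, and the pivotal observation $i\notin W_k^2$ (since $\zeta_8\in k$ would force $\sqrt 2\in\QQ(\sqrt d)$, impossible for square-free $d>2$) is the right one; it even handles the exceptional case $d=3$, where $W_k=\langle\zeta_{12}\rangle$, uniformly. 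Part (1) (total positivity of $N_{k/k^{+}}(\eta)$ against $N(\epsilon_d)=-1$) and the equivalence $\sqrt{i\epsilon_d}\in k\Leftrightarrow 2\epsilon_d$ is a square in $\QQ(\sqrt d)$ are handled correctly.

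Two points need tightening, though neither is fatal. (i) In the direction ``$Q_k=2\Rightarrow\sqrt{i\epsilon_d}\in k$'' you normalize a fundamental unit so that $\eta^{2}=\zeta\epsilon_d$ with $\zeta\in W_k\setminus W_k^{2}$; that membership is indeed automatic, but it is not free: one must rule out that $\epsilon_d$ is a square in $k$. The argument is short: if $\alpha^{2}=\epsilon_d$ with $\alpha\in k$, then $c(\alpha)^{2}=\epsilon_d$ forces $c(\alpha)=\pm\alpha$; the sign $+$ puts $\alpha$ in $E_{\QQ(\sqrt d)}$, contradicting the fundamentality of $\epsilon_d$, while the sign $-$ gives $\alpha=i\beta$ with $\beta\in\QQ(\sqrt d)$, whence $\epsilon_d=-\beta^{2}<0$, absurd. (ii) The step you defer as ``the main obstacle'' --- that $x\pm1$ a square in $\NN$ implies $2\epsilon_d$ is a square in $\QQ(\sqrt d)$ --- needs no exclusion of competing factorizations and no appeal to Lemma \ref{1:046}: if $x+1=a^{2}$, then $dy^{2}=(x-1)(x+1)$ gives $x-1=d(y/a)^{2}$, and $b=y/a$ is an integer because $d$ is square-free, so $2\epsilon_d=(x+1)+(x-1)+2y\sqrt d=(a+b\sqrt d)^{2}$ directly; in the half-integral case ($d\equiv1\pmod 4$, $x=u/2$ with $u$ odd) a parity check on $m^{2}=2(u\pm2)$ shows $2\epsilon_d$ is never a square there, so both sides of the equivalence fail simultaneously and nothing is lost. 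With (i) and (ii) inserted, your proof is complete and is, in substance, the proof behind the cited references.
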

 \begin{lem}[\cite{ZAT-15}]\label{3:105}
 Let $d\equiv1\pmod4$ be a positive square free integer and   $\epsilon_d=x+y\sqrt d$ be the fundamental unit of  $\QQ(\sqrt d)$. Assume   $N(\epsilon_d)=1$, then
 \begin{enumerate}[\rm\indent1.]
   \item $x+1$ and $x-1$ are not squares in  $\NN$ i.e. $2\epsilon_{d}$ is not a square in  $\QQ(\sqrt{d})$.
   \item For all prime  $p$ dividing   $d$, $p(x+1)$ and $p(x-1)$ are not squares in  $\NN$.
 \end{enumerate}
 \end{lem}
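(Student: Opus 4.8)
The plan is to reduce both assertions to the single arithmetic fact that, because $d\equiv1\pmod4$, the rational prime $2$ is unramified in $\qq$, so the ideal $(2)$ of $\OO_{\qq}$ is not the square of any ideal. Writing $\epsilon_d=x+y\sqrt d$ with $N(\epsilon_d)=1$ gives $x^2-dy^2=1$, that is $(x+1)(x-1)=dy^2$. Since $d$ is odd, $x$ and $y$ are simultaneously integers or simultaneously semi-integers; in the latter case $x\pm1$ and $p(x\pm1)$ are not even rational integers, so the $\NN$-statements are vacuous and only the claim on $2\epsilon_d$ needs proof. I would therefore first dispose of the clean field-theoretic statement and afterwards translate it into the elementary ones.

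For part 1, suppose $2\epsilon_d=\beta^2$ for some $\beta\in\qq$; as $2\epsilon_d$ is an algebraic integer we may take $\beta\in\OO_{\qq}$, and passing to ideals yields $(2)=(2\epsilon_d)=(\beta)^2$. But $d\equiv1\pmod4$ forces $2$ to be unramified in $\qq$, so every prime above $2$ occurs in $(2)$ with exponent $1$; hence $(2)$ cannot be the square of an ideal, a contradiction. Thus $2\epsilon_d$ is not a square. To obtain the equivalent statement on $x\pm1$, assume $x$ is a rational integer (otherwise there is nothing to prove) and that, say, $x+1=u^2$ is a square in $\NN$. Using $(x+1)(x-1)=dy^2$ together with $\gcd(x+1,x-1)\mid2$ and the fact that $d$ is squarefree, one checks that $x-1=d\,w^2$ with $y=uw$; then
\[
(u+w\sqrt d)^2=u^2+dw^2+2uw\sqrt d=(x+1)+(x-1)+2y\sqrt d=2\epsilon_d,
\]
so $2\epsilon_d$ is a square, contradicting what we just proved. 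The case $x-1=u^2$ is identical.

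For part 2, let $p\mid d$ and suppose $p(x+1)$ is a square in $\NN$ (the factor $x-1$ being symmetric). Writing $x+1=d_1a^2$ and $x-1=(d/d_1)b^2$ with $d_1\mid d$ squarefree and $y=ab$, squarefreeness forces $p\,d_1$ to be a perfect square, whence $d_1=p$; thus $x+1=pa^2$ and $x-1=(d/p)b^2$. Setting $\theta=pa+b\sqrt d\in\OO_{\qq}$ gives
\[
\theta^2=p^2a^2+db^2+2pab\sqrt d=p\bigl((x+1)+(x-1)+2y\sqrt d\bigr)=2p\,\epsilon_d.
\]
Passing to ideals, $(2p)=(\theta)^2$ is a square ideal. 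Since $p$ is odd and ramifies, $(p)=\mathfrak p^2$ is already a square; but $(2)$ is not, because $2$ is unramified, so the primes above $2$ appear in $(2p)=(2)\mathfrak p^2$ to odd exponent. This contradicts $(2p)$ being a square, proving that $p(x+1)$ is not a square.

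The conceptual core---that $(2)$, and hence $(2p)$, fails to be the square of an ideal precisely because $d\equiv1\pmod4$---is immediate from the ramification of $2$; this is exactly where the hypothesis $d\equiv1\pmod4$ enters and reverses the conclusion of Lemma \ref{1:048}, valid for $q\equiv-1\pmod4$. The only delicate point, and the main piece of routine work, is the elementary bookkeeping that turns the hypotheses ``$x+1$ is a square'' and ``$p(x+1)$ is a square'' into the exact identities $x-1=dw^2$ and $x-1=(d/p)b^2$; here one must separate the parity of $x$ (the coprimality $\gcd(x+1,x-1)=1$ holds only when $x$ is even, and an extra factor $2$ must be tracked when $x$ is odd) and handle the semi-integer case, but none of this affects the ideal-theoretic contradiction.
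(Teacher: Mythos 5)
Your proof is correct, but note that this paper never proves Lemma \ref{3:105} at all: it is imported verbatim from \cite{ZAT-15}, so the only comparison available is with that reference's (and this paper's general) style of argument, which is elementary: decompose $x^2-1=dy^2$ via Lemma \ref{1:046} into $x\pm1=d_1a^2$, $x\mp1=d_2b^2$ and kill each possibility by quadratic-residue/congruence conditions modulo $4$ (squares are $0,1 \bmod 4$, while $d_1+d_2\equiv 2 \bmod 4$ is forced by $d\equiv 1 \bmod 4$). You replace that congruence bookkeeping by a single conceptual fact: $d\equiv1\pmod4$ means the discriminant is odd, so $2$ is unramified in $\QQ(\sqrt d)$, hence neither $(2)$ nor $(2p)=(2)\mathfrak p^2$ can be the square of an ideal, which rules out $2\epsilon_d$ and $2p\epsilon_d$ being squares in the field. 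This is a genuinely different and arguably better route: it explains in one line why the conclusion here is opposite to Lemma \ref{1:048} (where $q\equiv-1\pmod4$ makes $2$ ramify and $2\epsilon_q$ \emph{is} a square), and it generalizes painlessly to part 2. What it costs you is exactly the translation step you flag as routine: converting ``$x\pm1$ (resp.\ $p(x\pm1)$) is a square in $\NN$'' into ``$2\epsilon_d$ (resp.\ $2p\epsilon_d$) is a square in $\QQ(\sqrt d)$'' still requires the coprime factorization of $(x+1)(x-1)=dy^2$, with the parity caveat you mention; in fact, when $x$ is odd one gets $x\pm1=2d_1a^2$ with $2d_1$ squarefree, so the hypothesis is impossible outright and the ideal argument is not even needed there --- a small imprecision in your closing remark, but not a gap.
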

\section{\textbf{ Fundamental system of units  of some CM-fields}}
As  $\kk=\QQ(\sqrt{pq_1q_2}, i)$, so $\kk$ admits three unramified quadratic extensions that are abelian over $\QQ$, which are $\KK_1=\kk(\sqrt{p})=\QQ(\sqrt{p}, \sqrt{q_1q_2}, i)$,  $\KK_2=\kk(\sqrt{q_1})=\QQ(\sqrt{q_1}, \sqrt{q_2p}, i)$ and   $\KK_3=\kk(\sqrt{q_2})=\QQ(\sqrt{q_2}, \sqrt{q_1p}, i)$. Put $\epsilon_{pq_1q_2}=x+y\sqrt{pq_1q_2}$. In what follows, we determine the  fundamental system of units of $\KK_j$, $1\leq j\leq3$.
\subsection{\textbf{ Fundamental system of units  of the field  $\KK_1$}}~\\
We begin by determining the systems of fundamental  units of $\KK_1^+$ and $\KK_1$.
\begin{propo}\label{27}
Keep the previous notations. Then  $Q_{\KK_1}=1$ and
\begin{enumerate}[\rm1.]
\item If $2p(x\pm1)$ is a square in $\NN$,  then $\left\{\epsilon_{p},  \epsilon_{q_1q_2}, \sqrt{\epsilon_{pq_1q_2}}\right\}$ is a
  fundamental system of units of both of $\KK_1^+$ and $\KK_1$.
\item Else  $\left\{\epsilon_{p},  \epsilon_{q_1q_2}, \sqrt{\epsilon_{q_1q_2}\epsilon_{pq_1q_2}}\right\}$ is a  fundamental system of units of both of $\KK_1^+$ and $\KK_1$.
\end{enumerate}
\end{propo}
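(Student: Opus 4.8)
The plan is to determine first a fundamental system of units (FSU) of the totally real biquadratic field $\KK_1^+=\QQ(\sqrt p,\sqrt{q_1q_2})$, whose quadratic subfields are $\QQ(\sqrt p)$, $\QQ(\sqrt{q_1q_2})$, $\QQ(\sqrt{pq_1q_2})$, and then to lift the result to $\KK_1=\KK_1^+(i)$ by means of Lemma~\ref{3}. The starting point is the computation of the three relevant norms. Since $p\equiv1\pmod4$ is prime, $N(\epsilon_p)=-1$; and since $q_1\equiv q_2\equiv3\pmod4$ make $-1$ a non-residue modulo $q_1$, the equation $X^2-pq_1q_2Y^2=-1$ (resp. $X^2-q_1q_2Y^2=-1$) is unsolvable, so $N(\epsilon_{q_1q_2})=N(\epsilon_{pq_1q_2})=+1$. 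Feeding these signs into Lemma~\ref{4} discards at once every element of $B$ that involves $\epsilon_p$ (namely $\epsilon_p$, the three pairs containing it, and the triple, whose factors do not share a common norm), so the only members of $B$ that can be squares in $\KK_1^+$ are $\epsilon_{q_1q_2}$, $\epsilon_{pq_1q_2}$ and $\epsilon_{q_1q_2}\epsilon_{pq_1q_2}$; by Lemma~\ref{2} a FSU of $\KK_1^+$ is then obtained by adjoining the square root of whichever of these turns out to be a square.

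The core is a square-detection argument. Writing $\epsilon_{pq_1q_2}=x+y\sqrt{pq_1q_2}$ with $N(\epsilon_{pq_1q_2})=1$, so that its conjugate equals $\epsilon_{pq_1q_2}^{-1}$, one has the identities $(\sqrt{\epsilon_{pq_1q_2}}\pm\sqrt{\epsilon_{pq_1q_2}^{-1}})^2=2(x\pm1)$; hence $\sqrt{\epsilon_{pq_1q_2}}\in\KK_1^+$ if and only if $2(x+1)$ or $2(x-1)$ is a square in $\KK_1^+$. As the positive rationals that are squares in $\KK_1^+$ are exactly those with squarefree part in $\{1,p,q_1q_2,pq_1q_2\}$, this amounts to one of $2(x\pm1),\,2p(x\pm1),\,2q_1q_2(x\pm1),\,2pq_1q_2(x\pm1)$ being a perfect square in $\NN$. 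Lemma~\ref{1:046} kills $2(x\pm1)$ and $2pq_1q_2(x\pm1)$, and the relation $(x-1)(x+1)=pq_1q_2y^2$ with $\gcd(x-1,x+1)\mid2$ forces each of $p,q_1,q_2$ to divide exactly one of $x\pm1$, so that $2q_1q_2(x+1)$ is a square precisely when $2p(x-1)$ is, and symmetrically. Thus $\sqrt{\epsilon_{pq_1q_2}}\in\KK_1^+$ is equivalent to the single condition that $2p(x+1)$ or $2p(x-1)$ be a square in $\NN$, which is case~1 and yields $\{\epsilon_p,\epsilon_{q_1q_2},\sqrt{\epsilon_{pq_1q_2}}\}$. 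A parallel computation using Lemma~\ref{3:105} (valid since $q_1q_2\equiv1\pmod4$ and $N(\epsilon_{q_1q_2})=1$) shows $\sqrt{\epsilon_{q_1q_2}}\notin\KK_1^+$ in every case, and that $\epsilon_{q_1q_2}\epsilon_{pq_1q_2}$ cannot be a square together with $\epsilon_{pq_1q_2}$, for otherwise their quotient $\epsilon_{q_1q_2}$ would be a square.

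In case~2 ($2p(x\pm1)$ not a square) it then remains to prove that $\epsilon_{q_1q_2}\epsilon_{pq_1q_2}$ \emph{is} a square in $\KK_1^+$, and I expect this to be the main obstacle. The product is not of the shape $z+w\sqrt m$ for a single quadratic subfield, so the clean conjugate-sum identity is unavailable; instead one must expand $\epsilon_{q_1q_2}\epsilon_{pq_1q_2}$ on the basis $\{1,\sqrt p,\sqrt{q_1q_2},\sqrt{pq_1q_2}\}$ and solve the resulting square conditions, feeding in the partition of $\{p,q_1,q_2\}$ between $x-1$ and $x+1$ that case~2 imposes, together with Lemmas~\ref{1:046} and~\ref{3:105}. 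Alternatively, one establishes that the unit index $q(\KK_1^+/\QQ)$ equals $2$, so that exactly one square relation must hold among the three candidates; since $\epsilon_{q_1q_2}$ is never a square and $\epsilon_{pq_1q_2}$ is a square only in case~1, the unique relation in case~2 has to be $\epsilon_{q_1q_2}\epsilon_{pq_1q_2}$, giving $\{\epsilon_p,\epsilon_{q_1q_2},\sqrt{\epsilon_{q_1q_2}\epsilon_{pq_1q_2}}\}$.

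Finally I would pass to $\KK_1=\KK_1^+(i)$ through Lemma~\ref{3}. Since $2$ is not, up to squares, a product of $-1,p,q_1q_2$, we have $\sqrt2\notin\KK_1$, so the primitive $8$th root $\xi_3=\frac{\sqrt2}{2}(1+i)$ is not in $\KK_1$; therefore $n_0=2$, $\xi_{n_0}=i$, $\mu_{n_0}=\mu_2=0$ and $2+\mu_{n_0}=2$. It then suffices to check that no unit $\epsilon$ of $\KK_1^+$ makes $2\epsilon$ a square in $\KK_1^+$. For a unit carrying an odd power of $\epsilon_p$ this is immediate from a norm-sign consideration: a square of $\KK_1^+$ has a norm to $\QQ(\sqrt{q_1q_2})$ that is itself a square, hence totally positive, whereas $N_{\KK_1^+/\QQ(\sqrt{q_1q_2})}(2\epsilon_p)=4N(\epsilon_p)=-4<0$, and likewise for $2\epsilon_p$ times any element of $\QQ(\sqrt{q_1q_2})\cup\QQ(\sqrt{pq_1q_2})$. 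For the remaining units, all of norm $+1$, one reruns the squarefree-part analysis with Lemmas~\ref{1:046} and~\ref{3:105}. Hence $\epsilon$ does not exist, so by Lemma~\ref{3}(1) the FSU of $\KK_1$ coincides with that of $\KK_1^+$; consequently $E_{\KK_1}=W_{\KK_1}E_{\KK_1^+}$, that is $Q_{\KK_1}=1$, which completes the proof in both cases.
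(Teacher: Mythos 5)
Your opening reductions (the norm computations, Lemma \ref{4} discarding everything involving $\epsilon_p$) and your treatment of case~1 are correct and essentially equivalent to the paper's: the paper reads off $\sqrt{\epsilon_{pq_1q_2}}=y_1\sqrt p+y_2\sqrt{q_1q_2}\in\KK_1^+$ from the factorization $x\pm1=2py_1^2$, $x\mp1=2q_1q_2y_2^2$, while you use the conjugate-sum identity plus rational square classes; both are fine. The genuine gap is case~2, and you flag it yourself: you never prove that $\epsilon_{q_1q_2}\epsilon_{pq_1q_2}$ \emph{is} a square in $\KK_1^+$ when $2p(x\pm1)$ is not a square in $\NN$. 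Your route (a) (``expand on the basis and solve the resulting square conditions'') is a plan, not an argument. Your route (b) is circular: asserting $q(\KK_1^+/\QQ)=2$ is exactly equivalent to asserting that one of the three candidates is a square, which is the statement to be proved; nothing in Lemmas \ref{2} and \ref{4} forces a square relation to exist at all (a priori the index could be $1$), so ``exactly one relation must hold'' cannot be invoked.

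The missing idea --- and this is how the paper closes case~2 --- is that suitable multiples of these units have square roots lying in quadratic subfields of $\KK_1^+$, even though $\sqrt{\epsilon_{q_1q_2}}$ and $\sqrt{\epsilon_{pq_1q_2}}$ themselves do not. Writing $\epsilon_{q_1q_2}=a+b\sqrt{q_1q_2}$, the same factorization argument you used for $x$ (via Lemmas \ref{1:046} and \ref{3:105}) gives $a\pm1=2b_1^2q_1$, $a\mp1=2b_2^2q_2$, hence $\sqrt{\epsilon_{q_1q_2}}=b_1\sqrt{q_1}+b_2\sqrt{q_2}$ and therefore
$\sqrt{q_1}\,\sqrt{\epsilon_{q_1q_2}}=b_1q_1+b_2\sqrt{q_1q_2}\in\QQ(\sqrt{q_1q_2})\subset\KK_1^+$;
so $q_1\epsilon_{q_1q_2}$ (and likewise $q_2\epsilon_{q_1q_2}$) is \emph{always} a square in $\KK_1^+$. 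In case~2 the partition of $\{p,q_1,q_2\}$ forces, say, $x\pm1=2q_1y_1^2$, $x\mp1=2pq_2y_2^2$, whence in the same way $\sqrt{q_1}\,\sqrt{\epsilon_{pq_1q_2}}=y_1q_1+y_2\sqrt{pq_1q_2}\in\QQ(\sqrt{pq_1q_2})\subset\KK_1^+$, i.e.\ $q_1\epsilon_{pq_1q_2}$ is a square in $\KK_1^+$. Multiplying, $\epsilon_{q_1q_2}\epsilon_{pq_1q_2}=(q_1\epsilon_{q_1q_2})(q_1\epsilon_{pq_1q_2})/q_1^2$ is a square in $\KK_1^+$, which is precisely what case~2 requires. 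The same explicit square roots also tighten your last paragraph: your norm-sign argument only disposes of units involving $\epsilon_p$, and the remaining checks needed for Lemma \ref{3} (that $2\epsilon$ is never a square, e.g.\ for $\epsilon=\epsilon_{q_1q_2}$ or $\epsilon=\sqrt{\epsilon_{pq_1q_2}}$) follow quickly from them --- for instance $2\epsilon_{q_1q_2}$ being a square would force $\sqrt{2q_1}\in\KK_1^+$, which is impossible; only then do you get $n_0=2$, the nonexistence of $\epsilon$, and hence $Q_{\KK_1}=1$.
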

\begin{proof}
As   $N(\epsilon_{p})=-1$, then by Lemma \ref{4} only  $\epsilon_{q_1q_2}$, $\epsilon_{pq_1q_2}$ and $\epsilon_{q_1q_2}\epsilon_{pq_1q_2}$ can be squares in $\KK_1^+$.

Put $\epsilon_{q_1q_2}=a+b\sqrt{q_1q_2}$, then $a^2-1=b^2q_1q_2$. Hence by Lemmas  \ref{1:046} and \ref{3:105} we get that only the number   $2q_1(a\pm1)$ (i.e. $2q_2(a\pm1)$) is a square in $\NN$. So there exist    $b_1$ and $b_2$ in $\ZZ$ such that
 $$\left\{\begin{array}{rl}
    a\pm1 &=2b_1^2q_1\\
    a\mp1 &=2b_2^2q_2,
    \end{array}\right.$$
 therefore  $\sqrt{\epsilon_{q_1q_2}}=b_1\sqrt{q_1}+b_2\sqrt{q_2}$, which implies that$q_1\epsilon_{q_1q_2}$ and $q_2\epsilon_{q_1q_2}$ are squares in $\KK_1^+$ but $\epsilon_{q_1q_2}$ is not.

  Since $N(\epsilon_{pq_1q_2})=1$, then $x^2-1=y^2pq_1q_2$.  Hence Lemmas \ref{1:046} and \ref{3:105}  allowed us to distinguish the following cases:
 \begin{enumerate}[\rm a.]
 \item   If $2p(x\pm1)$ is a square in $\NN$, then $\sqrt{\epsilon_{pq_1q_2}}=y_1\sqrt{p}+y_2\sqrt{q_1q_2}$, hence   $\epsilon_{pq_1q_2}$ is a square in $\KK_1^+$.
 \item  If $2q_1(x\pm1)$ is a square in $\NN$,  then $\sqrt{\epsilon_{pq_1q_2}}=y_1\sqrt{q_1}+y_2\sqrt{pq_2}$, hence  $q_1\epsilon_{pq_1q_2}$ and $pq_2\epsilon_{pq_1q_2}$ are squares in $\KK_1^+$ but $\epsilon_{pq_1q_2}$ is not.
  \item   If $2q_2(x\pm1)$ is a square in $\NN$, then  $\sqrt{\epsilon_{pq_1q_2}}=y_1\sqrt{q_2}+y_2\sqrt{pq_1}$, hence   $q_2\epsilon_{pq_1q_2}$ and $pq_1\epsilon_{pq_1q_2}$ are  squares in  $\KK_1^+$  but $\epsilon_{pq_1q_2}$  is not.
 \end{enumerate}
 Consequently, we have
 \begin{enumerate}[\rm 1.]
 \item If  $2p(x\pm1)$ is a square in $\NN$, then $\epsilon_{pq_1q_2}$ is a square in $\KK_1^+$. Thus Lemmas \ref{2} and \ref{3} yield that $\left\{\epsilon_{p},  \epsilon_{q_1q_2}, \sqrt{\epsilon_{pq_1q_2}}\right\}$ is a   fundamental system of units of both of $\KK_1^+$ and $\KK_1$.
\item    If $2q_1(x\pm1)$ or $2q_2(x\pm1)$ is a square in $\NN$, then  $q_1\epsilon_{pq_1q_2}$ or $q_2\epsilon_{pq_1q_2}$ is a square in $\KK_1^+$. As $q_1\epsilon_{q_1q_2}$ and $q_2\epsilon_{q_1q_2}$ are squares in $\KK_1^+$, so $\epsilon_{q_1q_2}\epsilon_{pq_1q_2}$ is a square in $\KK_1^+$. Thus Lemmas \ref{2} and \ref{3} yield that  $\left\{\epsilon_{p},  \epsilon_{q_1q_2}, \sqrt{\epsilon_{q_1q_2}\epsilon_{pq_1q_2}}\right\}$ is a  fundamental system of units of both of $\KK_1^+$ and $\KK_1$.
\end{enumerate}
\end{proof}
\subsection{ Fundamental system of units of the field  $\KK_2$}~\\
Let us now determine the  fundamental system of units's of  $\KK_2^+=\QQ(\sqrt{q_1}, \sqrt{pq_2})$ and $\KK_2=\QQ(\sqrt{q_1}, \sqrt{pq_2}, i)$.
\begin{propo}\label{31}
Keep the previous notations and put $\epsilon_{pq_2}=a+b\sqrt{pq_2}$. Then   $Q_{\KK_2}=2$. Moreover we have:
\begin{enumerate}[\upshape1.]
\item Assume $2q_1(x\pm1)$ is a square in $\NN$,  then
\begin{enumerate}[\upshape i.]
\item If $a\pm1$ is a square in $\NN$, then $\left\{\epsilon_{q_1},  \sqrt{\epsilon_{q_1}\epsilon_{pq_2}}, \sqrt{\epsilon_{pq_1q_2}}\right\}$ is a  fundamental system of units of  $\KK_2^+$, and that of  $\KK_2$ is $\left\{\sqrt{\epsilon_{q_1}\epsilon_{pq_2}}, \sqrt{\epsilon_{pq_1q_2}}, \sqrt{i\epsilon_{q_1}}\right\}$.
\item Else   $\left\{\epsilon_{q_1}, \epsilon_{pq_2},  \sqrt{\epsilon_{pq_1q_2}}\right\}$ is a  fundamental system of units of  $\KK_2^+$, and that of  $\KK_2$ is  $\left\{\epsilon_{pq_2}, \sqrt{\epsilon_{pq_1q_2}}, \sqrt{i\epsilon_{q_1}}\right\}$.
\end{enumerate}
\item Assume $2q_1(x\pm1)$ is not a square in $\NN$,  then
\begin{enumerate}[\upshape i.]
\item If $a\pm1$ is a square in $\NN$, then $\left\{\epsilon_{q_1},  \sqrt{\epsilon_{q_1}\epsilon_{pq_2}}, \epsilon_{pq_1q_2}\right\}$ is a  fundamental system of units of   $\KK_2^+$, and that of $\KK_2$  is $\left\{\sqrt{\epsilon_{q_1}\epsilon_{pq_2}},  \epsilon_{pq_1q_2}, \sqrt{i\epsilon_{q_1}}\right\}$.
\item If $p(a\pm1)$ is a square in $\NN$, then $\left\{\epsilon_{q_1}, \epsilon_{pq_2}, \sqrt{\epsilon_{q_1}\epsilon_{pq_2}\epsilon_{pq_1q_2}}\right\}$ is a  fundamental system of units of   $\KK_2^+$, and that of  $\KK_2$ is $\left\{\epsilon_{pq_2},  \sqrt{\epsilon_{q_1}\epsilon_{pq_2}\epsilon_{pq_1q_2}}, \sqrt{i\epsilon_{q_1}}\right\}$.
\item If $2p(a\pm1)$ is a square in $\NN$, then $\left\{\epsilon_{q_1}, \epsilon_{pq_2}, \sqrt{\epsilon_{pq_2}\epsilon_{pq_1q_2}}\right\}$ is a  fundamental system of units of   $\KK_2^+$, and that of $\KK_2$ is  $\left\{\epsilon_{pq_2},  \sqrt{\epsilon_{pq_2}\epsilon_{pq_1q_2}}, \sqrt{i\epsilon_{q_1}}\right\}$.
\end{enumerate}
\end{enumerate}
\end{propo}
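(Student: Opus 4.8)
The plan is to reduce everything to the real field $\KK_2^+=\QQ(\sqrt{q_1},\sqrt{pq_2})$ and only at the end pass to $\KK_2=\KK_2^+(i)$ via Lemma \ref{3}. Since $q_1\equiv q_2\equiv3\pmod4$, each of $\QQ(\sqrt{q_1})$, $\QQ(\sqrt{pq_2})$ and $\QQ(\sqrt{pq_1q_2})$ has a fundamental unit of norm $+1$, so by Lemma \ref{4} every element of $B$ is a priori a candidate square in $\KK_2^+$; the work is to decide which products actually are squares. For this I would record explicit radicals: writing $\epsilon_{q_1}=s+t\sqrt{q_1}$, Lemma \ref{1:048} gives $\sqrt{2\epsilon_{q_1}}=\sqrt{s+1}+\sqrt{s-1}\in\QQ(\sqrt{q_1})$; applying Lemma \ref{1:046} to $\epsilon_{pq_2}=a+b\sqrt{pq_2}$ and factoring $(a-1)(a+1)=b^2pq_2$ splits the analysis into exactly the three listed cases — $a\pm1$ a square, $p(a\pm1)$ a square, or $2p(a\pm1)$ a square — with $\sqrt{2\epsilon_{pq_2}}$, resp.\ $\sqrt{\epsilon_{pq_2}}$, of the shape $u\sqrt p+v\sqrt{q_2}$ (or lying in $\QQ(\sqrt{pq_2})$); and, exactly as in the proof of Proposition \ref{27}, Lemmas \ref{1:046} and \ref{3:105} applied to $\epsilon_{pq_1q_2}=x+y\sqrt{pq_1q_2}$ force precisely one of $2q_1(x\pm1)$, $2p(x\pm1)$, $2q_2(x\pm1)$ to be a square, giving $\sqrt{\epsilon_{pq_1q_2}}$ equal to $y_1\sqrt{q_1}+y_2\sqrt{pq_2}$, $y_1\sqrt p+y_2\sqrt{q_1q_2}$ or $y_1\sqrt{q_2}+y_2\sqrt{pq_1}$ respectively.

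The heart of the argument is then a cross-matching computation: multiply the relevant radicals and check when the irrational parts collapse into $\KK_2^+=\QQ(\sqrt{q_1},\sqrt{pq_2})$, i.e.\ when the $\sqrt p$ and $\sqrt{q_2}$ contributions cancel, leaving only $\sqrt{q_1}$, $\sqrt{pq_2}$, $\sqrt{pq_1q_2}$ and rationals. When $2q_1(x\pm1)$ is a square, $\sqrt{\epsilon_{pq_1q_2}}=y_1\sqrt{q_1}+y_2\sqrt{pq_2}$ already lies in $\KK_2^+$, so $\epsilon_{pq_1q_2}$ is a square there; combined with $a\pm1$ a square (which makes $2\epsilon_{pq_2}$ a square in $\QQ(\sqrt{pq_2})$, hence $\epsilon_{q_1}\epsilon_{pq_2}$ a square in $\KK_2^+$) this gives case 1.i, while the remaining $\epsilon_{pq_2}$-subcases give case 1.ii. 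When $2q_1(x\pm1)$ is not a square, $\epsilon_{pq_1q_2}$ is not a square in $\KK_2^+$, and one checks that $\sqrt{\epsilon_{pq_2}}\,\sqrt{\epsilon_{pq_1q_2}}$ (or $\sqrt{2\epsilon_{pq_2}}\,\sqrt{\epsilon_{pq_1q_2}}$) always lands in $\KK_2^+$ regardless of which of the $2p$/$2q_2$ alternatives holds for $\epsilon_{pq_1q_2}$, which produces cases 2.i--2.iii.

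The main obstacle I anticipate is the bookkeeping of the powers of $2$, since this is exactly what separates cases 2.ii and 2.iii. In case $p(a\pm1)$ a square one only controls $\sqrt{2\epsilon_{pq_2}}$, so $\sqrt{2\epsilon_{pq_2}}\,\sqrt{\epsilon_{pq_1q_2}}\in\KK_2^+$ shows that $2\epsilon_{pq_2}\epsilon_{pq_1q_2}$ (not $\epsilon_{pq_2}\epsilon_{pq_1q_2}$) is a square, so one must spend the extra factor $2$ from $2\epsilon_{q_1}$ to conclude that $\epsilon_{q_1}\epsilon_{pq_2}\epsilon_{pq_1q_2}$ is a square — the triple product of case 2.ii. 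In case $2p(a\pm1)$ a square one instead has the clean radical $\sqrt{\epsilon_{pq_2}}=u\sqrt p+v\sqrt{q_2}$, so $\epsilon_{pq_2}\epsilon_{pq_1q_2}$ itself is a square, the double product of case 2.iii. Alongside this I would verify that in each case exactly one product of $B$ becomes a square in $\KK_2^+$, so that Lemma \ref{2} pins down the stated fundamental system of $\KK_2^+$ with a single adjoined square root.

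Finally, to pass to $\KK_2$ I would apply Lemma \ref{3}. Since $\sqrt2\notin\KK_2^+$, the field $\KK_2$ contains no primitive $8$th root of unity, so $n_0=2$ and $\mu_{n_0}=0$; the unit $\epsilon=\epsilon_{q_1}$ satisfies $(2+\mu_{n_0})\epsilon=2\epsilon_{q_1}$, a square in $\KK_2^+$ by Lemma \ref{1:048}, so case (2) of Lemma \ref{3} applies and replaces the generator $\epsilon_{q_1}$ of each fundamental system of $\KK_2^+$ by $\sqrt{i\epsilon_{q_1}}=\tfrac{1+i}{2}\sqrt{2\epsilon_{q_1}}$, yielding the stated fundamental systems of $\KK_2$. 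As this unit squares to $i\epsilon_{q_1}\in W_{\KK_2}E_{\KK_2^+}$ while itself lying outside $W_{\KK_2}E_{\KK_2^+}$, we get $Q_{\KK_2}=[E_{\KK_2}:W_{\KK_2}E_{\KK_2^+}]=2$, completing the proof.
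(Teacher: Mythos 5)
Your proposal follows essentially the same route as the paper's proof: Lemma \ref{4} to list the candidate squares, Lemma \ref{1:048} to get $2\epsilon_{q_1}$ as a square, Lemmas \ref{1:046} and \ref{3:105} to drive the same three-way case analyses for $\epsilon_{pq_2}$ and $\epsilon_{pq_1q_2}$, then Lemmas \ref{2} and \ref{3} to conclude for $\KK_2^+$ and $\KK_2$ respectively; your explicit radical cross-multiplication is just the paper's ``$m\epsilon$ is a square'' bookkeeping written out. One phrasing slip worth fixing: the claim that $\sqrt{2\epsilon_{pq_2}}\,\sqrt{\epsilon_{pq_1q_2}}$ lands in $\KK_2^+$ ``regardless'' in case 2 fails in case 2.i (where $a\pm1$ is a square, so $\sqrt{2\epsilon_{pq_2}}\in\QQ(\sqrt{pq_2})$ while $\sqrt{\epsilon_{pq_1q_2}}\notin\KK_2^+$), but there the correct mechanism --- $\epsilon_{q_1}\epsilon_{pq_2}$ a square and $\epsilon_{pq_1q_2}$ kept as a fundamental unit --- is the one you already describe, so the conclusion is unaffected.
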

\begin{proof}
By Lemma \ref{4} the units that can be squares in $\KK_2$ are:  $\epsilon_{q_1}$, $\epsilon_{pq_2}$, $\epsilon_{pq_1q_2}$, $\epsilon_{q_1}\epsilon_{pq_2}$, $\epsilon_{q_1}\epsilon_{pq_1q_2}$, $\epsilon_{pq_1}\epsilon_{pq_1q_2}$ and $\epsilon_{q_1}\epsilon_{pq_2}\epsilon_{pq_1q_2}$.

According to  Lemma \ref{1:048},    $2\epsilon_{q_1}$ is a square in $\KK_2^+$ but $\epsilon_{q_1}$  is not.

 Put $\epsilon_{pq_2}=a+b\sqrt{pq_2}$, then $a^2-1=b^2pq_2$. Hence   Lemma \ref{1:046} allowed us to distinguish the following cases:
 \begin{enumerate}[\rm a.]
\item If $a\pm1$ is a square in $\NN$, then there exist $b_1$ and $b_2$ in $\ZZ$ such that
 $$\left\{\begin{array}{rl}
    a\pm1 &=b_1^2,\\
    a\mp1 &=b_2^2pq_2,
    \end{array}\right.$$
  thus $\sqrt{2\epsilon_{pq_2}}=b_1+b_2\sqrt{pq_2}$. Therefore  $2\epsilon_{pq_2}$  a square in $\KK_1^+$ but $\epsilon_{pq_2}$ is not.
  \item If $p(a\pm1)$ is a square in $\NN$, then there exist $b_1$ and $b_2$ in $\ZZ$ such that
 $$\left\{\begin{array}{rl}
    a\pm1 &=b_1^2p,\\
    a\mp1 &=b_2^2q_2,
    \end{array}\right.$$
 thus $\sqrt{2\epsilon_{pq_2}}=b_1\sqrt{p}+b_2\sqrt{q_2}$. Therefore   $2p\epsilon_{pq_2}$ and $2q_2\epsilon_{q_1q_2}$ are squares in $\KK_2^+$ but $\epsilon_{pq_2}$ and $2\epsilon_{pq_2}$ are not.
   \item If $2p(a\pm1)$ is a square in $\NN$, then there exist $b_1$ and $b_2$ in $\ZZ$ such
 $$\left\{\begin{array}{rl}
    a\pm1 &=2b_1^2p,\\
    a\mp1 &=2b_2^2q_2,
    \end{array}\right.$$
 thus $\sqrt{\epsilon_{pq_2}}=b_1\sqrt{p}+b_2\sqrt{q_2}$. Therefore  $p\epsilon_{pq_2}$ and $q_2\epsilon_{pq_2}$ are  squares in $\KK_2^+$ but $\epsilon_{pq_2}$  is not.
 \end{enumerate}

   As $N(\epsilon_{pq_1q_2})=1$, then  $x^2-1=y^2pq_1q_2$; hence Lemmas \ref{1:046} and \ref{3:105}  allowed us to distinguish the following cases:
    \begin{enumerate}[\rm a'.]
\item If $2p(x\pm1)$ is a square in $\NN$, then $\sqrt{\epsilon_{pq_1q_2}}=y_1\sqrt{p}+y_2\sqrt{q_1q_2}$, thus  $p\epsilon_{pq_1q_2}$ and $q_1q_2\epsilon_{pq_1q_2}$ are  squares in $\KK_2^+$ but $\epsilon_{pq_1q_2}$ is not.
 \item If  $2q_1(x\pm1)$ is a square in $\NN$,  then $\sqrt{\epsilon_{pq_1q_2}}=y_1\sqrt{q_1}+y_2\sqrt{pq_2}$, thus   $\epsilon_{pq_1q_2}$ is a square in $\KK_2^+$.
 \item If  $2q_2(x\pm1)$ is a square in $\NN$,  then $\sqrt{\epsilon_{pq_1q_2}}=y_1\sqrt{q_2}+y_2\sqrt{pq_1}$, thus $q_2\epsilon_{pq_1q_2}$ and $pq_1\epsilon_{pq_1q_2}$ are squares in $\KK_1^+$ but $\epsilon_{pq_1q_2}$  is not.
 \end{enumerate}
Consequently, we have
\begin{enumerate}[\rm 1.]
\item  Assume $2q_1(x\pm1)$ is a square in $\NN$, then $\epsilon_{pq_1q_2}$ is a square in $\KK_2^+$.
\begin{enumerate}[\rm i.]
\item If $a\pm1$ is a square in $\NN$, then  $2\epsilon_{pq_2}$ is a square in $\KK_2^+$;  thus $\epsilon_{q_1}\epsilon_{pq_2}$ is a square in $\KK_2^+$, since  $2\epsilon_{q_1}$ is. Therefore, by Lemma  \ref{2} $\left\{\epsilon_{q_1},  \sqrt{\epsilon_{q_1}\epsilon_{pq_2}}, \sqrt{\epsilon_{pq_1q_2}}\right\}$ is a  fundamental system of units of  $\KK_2^+$, and according to Lemma \ref{3}  $\left\{\sqrt{\epsilon_{q_1}\epsilon_{pq_2}}, \sqrt{\epsilon_{pq_1q_2}}, \sqrt{i\epsilon_{q_1}}\right\}$   is a  fundamental system of units of   $\KK_2$.
 \item Else    $\epsilon_{pq_1q_2}$ will be a square in $\KK_2^+$; hence by Lemma  \ref{2} $\left\{\epsilon_{q_1},  \epsilon_{pq_2}, \sqrt{\epsilon_{pq_1q_2}}\right\}$ is a  fundamental system of units of  $\KK_2^+$,  and according to Lemma \ref{3} $\left\{\epsilon_{pq_2}, \sqrt{\epsilon_{pq_1q_2}}, \sqrt{i\epsilon_{q_1}}\right\}$   is a  fundamental system of units of   $\KK_2$.
 \end{enumerate}
\item Assume $2q_1(x\pm1)$ is not a square in $\NN$,  then $\epsilon_{pq_1q_2}$ is not a square in $\KK_2^+$.
\begin{enumerate}[\rm i.]
\item If  $a\pm1$   is a square in $\NN$, then $2\epsilon_{pq_2}$ is a square in $\KK_2^+$;  hence  $\epsilon_{q_1}\epsilon_{pq_2}$ is a square in $\KK_2^+$, since $2\epsilon_{q_1}$ is a square in $\NN$. Thus  by Lemma  \ref{2} $\left\{\epsilon_{q_1},   \sqrt{\epsilon_{q_1}\epsilon_{pq_2}}, \epsilon_{pq_1q_2}\right\}$ is a  fundamental system of units of   $\KK_2^+$,  and according to Lemma \ref{3}  $\left\{\sqrt{\epsilon_{q_1}\epsilon_{pq_2}}, \epsilon_{pq_1q_2}, \sqrt{i\epsilon_{q_1}}\right\}$ is a  fundamental system of units of   $\KK_2$.
\item If $p(a\pm1)$   is a square in $\NN$, then $2p\epsilon_{pq_2}$ and $2q_2\epsilon_{pq_2}$ are squares in $\KK_2^+$. On the other hand, we have $p\epsilon_{pq_1q_2}$ or $q_2\epsilon_{pq_1q_2}$ is a square in $\KK_1^+$, thus  $\epsilon_{q_1}\epsilon_{pq_2}\epsilon_{pq_1q_2}$ is a square in $\KK_2^+$, since $2\epsilon_{q_1}$ is a square in $\NN$. Therefore  by Lemma  \ref{2}  $\left\{\epsilon_{q_1},   \epsilon_{pq_2}, \sqrt{\epsilon_{q_1}\epsilon_{pq_2}\epsilon_{pq_1q_2}}\right\}$ is a  fundamental system of units of   $\KK_2^+$,  and according to Lemma \ref{3}\\ $\left\{\epsilon_{pq_2}, \sqrt{\epsilon_{q_1}\epsilon_{pq_2}\epsilon_{pq_1q_2}}, \sqrt{i\epsilon_{q_1}}\right\}$ is a  fundamental system of units of   $\KK_2$.
\item The last case is treated similarly.
\end{enumerate}

  \end{enumerate}
\end{proof}
\subsection{ Fundamental system of units of the field  $\KK_3$}~\\
Since $q_1$ and $q_2$ play symmetrical roles, then the  fundamental system of units's of  $\KK_3^+=\QQ(\sqrt{q_2}, \sqrt{pq_1})$ and $\KK_3=\QQ(\sqrt{q_2}, \sqrt{pq_1}, i)$ are easily deduced.
\begin{propo}\label{32}
Keep the previous notations and put  $\epsilon_{pq_1}=a+b\sqrt{pq_1}$. Then $Q_{\KK_3}=2$. Moreover we have.
\begin{enumerate}[\upshape1.)]
\item Assume $2q_2(x\pm1)$ is a square in $\NN$,  then
\begin{enumerate}[\upshape i.]
\item If $a\pm1$ is a square in $\NN$, then $\left\{\epsilon_{q_2},  \sqrt{\epsilon_{q_2}\epsilon_{pq_1}}, \sqrt{\epsilon_{pq_1q_2}}\right\}$ is a  fundamental system of units of  $\KK_3^+$, and that of  $\KK_3$ is $\left\{\sqrt{\epsilon_{q_2}\epsilon_{pq_1}}, \sqrt{\epsilon_{pq_1q_2}}, \sqrt{i\epsilon_{q_2}}\right\}$.
\item Else   $\left\{\epsilon_{q_2}, \epsilon_{pq_1},  \sqrt{\epsilon_{pq_1q_2}}\right\}$ is a  fundamental system of units of  $\KK_3^+$, and that of  $\KK_3$ is $\left\{\epsilon_{pq_1}, \sqrt{\epsilon_{pq_1q_2}}, \sqrt{i\epsilon_{q_2}}\right\}$.
\end{enumerate}
\item Assume $2q_2(x\pm1)$ is not a square in $\NN$,  then
\begin{enumerate}[\upshape i.]
\item If $a\pm1$ is a square in $\NN$, then $\left\{\epsilon_{q_2},  \sqrt{\epsilon_{q_2}\epsilon_{pq_1}}, \epsilon_{pq_1q_2}\right\}$ is a  fundamental system of units of   $\KK_3^+$, and that of $\KK_3$ is   $\left\{\sqrt{\epsilon_{q_2}\epsilon_{pq_1}},  \epsilon_{pq_1q_2}, \sqrt{i\epsilon_{q_2}}\right\}$.
\item If $p(a\pm1)$ is a square in $\NN$, then $\left\{\epsilon_{q_2}, \epsilon_{pq_1}, \sqrt{\epsilon_{q_2}\epsilon_{pq_1}\epsilon_{pq_1q_2}}\right\}$ is a  fundamental system of units of   $\KK_3^+$, and that of $\KK_2$ is   $\left\{\epsilon_{pq_1},  \sqrt{\epsilon_{q_2}\epsilon_{pq_1}\epsilon_{pq_1q_2}}, \sqrt{i\epsilon_{q_2}}\right\}$.
\item If $2p(a\pm1)$ is a square in $\NN$, then $\left\{\epsilon_{q_2}, \epsilon_{pq_1}, \sqrt{\epsilon_{pq_1}\epsilon_{pq_1q_2}}\right\}$ is a  fundamental system of units of   $\KK_3^+$, and that of $\KK_3$ is   $\left\{\epsilon_{pq_1},  \sqrt{\epsilon_{pq_1}\epsilon_{pq_1q_2}}, \sqrt{i\epsilon_{q_2}}\right\}$.
\end{enumerate}
\end{enumerate}
\end{propo}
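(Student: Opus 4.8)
The plan is to reduce everything to Proposition~\ref{31} by symmetry, and then to indicate how the direct argument would run. Since the standing hypotheses $q_1\equiv q_2\equiv-1\pmod4$ are symmetric in $q_1$ and $q_2$, interchanging the labels $q_1\leftrightarrow q_2$ is a symmetry of the whole situation: it fixes $\kk$ and $\QQ(\sqrt{pq_1q_2})$ (hence $\epsilon_{pq_1q_2}=x+y\sqrt{pq_1q_2}$), swaps $\epsilon_{q_1}\leftrightarrow\epsilon_{q_2}$ and $\epsilon_{pq_2}\leftrightarrow\epsilon_{pq_1}$, and carries $\KK_2=\QQ(\sqrt{q_1},\sqrt{pq_2},i)$ to $\KK_3=\QQ(\sqrt{q_2},\sqrt{pq_1},i)$. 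Renaming $\epsilon_{pq_2}=a+b\sqrt{pq_2}$ as $\epsilon_{pq_1}=a+b\sqrt{pq_1}$ and the case hypothesis ``$2q_1(x\pm1)$ is a square'' as ``$2q_2(x\pm1)$ is a square'', the statement of Proposition~\ref{31} transcribes verbatim into the statement above; so the proposition is an immediate consequence of Proposition~\ref{31}.

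For a self-contained argument I would first record that the three real quadratic subfields of $\KK_3^+=\QQ(\sqrt{q_2},\sqrt{pq_1})$ are $\QQ(\sqrt{q_2})$, $\QQ(\sqrt{pq_1})$ and $\QQ(\sqrt{pq_1q_2})$, with fundamental units $\epsilon_{q_2}$, $\epsilon_{pq_1}$ and $\epsilon_{pq_1q_2}$. As $q_2\equiv-1\pmod4$ and each of $pq_1$, $pq_1q_2$ has the prime factor $q_1\equiv-1\pmod4$, these three units all have norm $+1$, so by Lemma~\ref{4} every element of $B$ is a priori a candidate square. Lemma~\ref{1:048} gives that $2\epsilon_{q_2}$ is a square in $\KK_3^+$ while $\epsilon_{q_2}$ is not, so $\epsilon_{q_2}$ survives in every fundamental system of $\KK_3^+$.

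Next I would expand $\epsilon_{pq_1}$ through $a^2-1=b^2pq_1$ and $\epsilon_{pq_1q_2}$ through $x^2-1=y^2pq_1q_2$, and apply Lemmas~\ref{1:046} and~\ref{3:105} to decide which of $a\pm1$, $p(a\pm1)$, $2p(a\pm1)$ and which of $2p(x\pm1)$, $2q_1(x\pm1)$, $2q_2(x\pm1)$ can be squares in $\NN$, reading off the resulting $\sqrt{2\epsilon_{pq_1}}$ or $\sqrt{\epsilon_{pq_1}}$ and $\sqrt{\epsilon_{pq_1q_2}}$. The governing alternative is whether $2q_2(x\pm1)$ is a square: this is precisely the condition making $\sqrt{\epsilon_{pq_1q_2}}=y_1\sqrt{q_2}+y_2\sqrt{pq_1}$ lie in $\KK_3^+$, i.e. $\epsilon_{pq_1q_2}$ a square there (case~1); otherwise $\epsilon_{pq_1q_2}$ is not a square in $\KK_3^+$ (case~2). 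Feeding in the behaviour of $\epsilon_{pq_1}$, and using that $2\epsilon_{q_2}$ is a square to convert ``$2\epsilon_{pq_1}$ a square'' into ``$\epsilon_{q_2}\epsilon_{pq_1}$ a square'' and ``$2p\epsilon_{pq_1}$ or $2q_1\epsilon_{pq_1}$ a square'' (together with ``$p\epsilon_{pq_1q_2}$ or $q_1\epsilon_{pq_1q_2}$ a square'') into ``$\epsilon_{q_2}\epsilon_{pq_1}\epsilon_{pq_1q_2}$ a square'', Lemma~\ref{2} yields the listed fundamental system of units of $\KK_3^+$ in each of the five subcases.

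Finally I would pass from $\KK_3^+$ to $\KK_3=\KK_3^+(i)$ via Lemma~\ref{3}. Since none of $q_2$, $pq_1$, $pq_1q_2$ equals $2$, we have $\sqrt2\notin\KK_3^+$, so $\xi_3\notin\KK_3$ and $n_0=2$, whence $\xi_{n_0}=i$ and $\mu_{n_0}=0$. Thus $(2+\mu_{n_0})\epsilon_{q_2}=2\epsilon_{q_2}$ is a square in $\KK_3^+$ by Lemma~\ref{1:048}, so the unit $\epsilon$ of Lemma~\ref{3} exists and may be taken to be $\epsilon_{q_2}$ (with exponents $i_1=i_2=0$); its second alternative then replaces $\epsilon_{q_2}$ in the system for $\KK_3^+$ by $\sqrt{i\epsilon_{q_2}}$, giving both the stated fundamental system of $\KK_3$ and the value $Q_{\KK_3}=2$. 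The one delicate point is the bookkeeping of the third paragraph: in each subcase one must verify which product of the $\epsilon$'s actually acquires a square root inside $\KK_3^+$ rather than merely inside the larger field $\KK_2^+$, since $\sqrt{q_1}$ and $\sqrt{pq_2}$ do not belong to $\KK_3^+$. This is exactly the step that the symmetry argument of the first paragraph renders automatic.
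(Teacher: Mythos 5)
Your proposal is correct and takes essentially the same approach as the paper: the paper offers no separate proof of this proposition, deducing it from Proposition~\ref{31} solely by the remark that $q_1$ and $q_2$ play symmetrical roles, which is exactly your first paragraph. Your supplementary self-contained sketch just transports the paper's proof of Proposition~\ref{31} (norm computations, Lemmas~\ref{1:046}, \ref{1:048}, \ref{3:105}, then Lemmas~\ref{2} and~\ref{3} with $n_0=2$) through that symmetry, so nothing further is needed.
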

\section{\textbf{The  ambiguous classes of $\kk/\QQ(i)$}}
 Let $F=\QQ(i)$ and $\kk=\QQ(\sqrt{pq_1q_2}, i)$. We denote by  $\mathrm{A}m(\kk/F)$  the group of the ambiguous classes of $\kk/F$ and by $\mathrm{A}m_s(\kk/F)$ the subgroup of $\mathrm{A}m(\kk/F)$ generated by  the strongly ambiguous classes.
As  $p\equiv 1\pmod4$, so there exist $e$ and $f$ in $\NN$ such that  $p=e^2+4f^2=\pi_1\pi_2$. Put  $\pi_1=e+2if$ and  $\pi_2=e-2if$. Let  $\mathcal{H}_j$ (resp. $\mathcal{Q}_j$)  be the prime ideal of  $\kk$ above $\pi_j$ (resp. $q_j$), where $j\in\{1, 2\}$. It is easy to see  that $\mathcal{H}_j^2=(\pi_j)$ and $\mathcal{Q}_j^2=(q_j)$. Therefore $[\mathcal{Q}_j]$ and  $[\mathcal{H}_j]$ are in $\mathrm{A}m_s(\kk/F)$, for all  $j\in\{1, 2\}$.  Keep the notation $\epsilon_{pq_1q_2}=x+y\sqrt{pq_1q_2}$. In this section,  we will determine  generators of $\mathrm{A}m_s(\kk/F)$ and $\mathrm{A}m(\kk/F)$.  Let us first prove the following result.
\begin{lem}\label{7}
Consider the prime ideals $\mathcal{H}_j$ and $\mathcal{Q}_j$ of $\kk$, $1\leq j\leq2$.
\begin{enumerate}[\rm\indent1.]
  \item If $2p(x\pm1)$ is  a square in $\NN$, then  $\left|\langle[\h], [\mathcal{Q}_1]\rangle\right|=4$.
     \item Else $\left|\langle[\h], [\hh]\rangle\right|=4$
\end{enumerate}
\end{lem}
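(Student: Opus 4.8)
The plan is to determine the unit group of $\kk$ first and then convert every principality question into the solvability of a square condition in the real quadratic field $\qq=\QQ(\sqrt{pq_1q_2})$. Since $q_1\equiv-1\pmod4$ divides $d=pq_1q_2$, one has $N(\epsilon_{pq_1q_2})=1$, and Lemma \ref{3:105} then shows that $2\epsilon_{pq_1q_2}$ is not a square in $\qq$; hence Lemma \ref{6} gives $E_\kk=\langle i\rangle\times\langle\epsilon_{pq_1q_2}\rangle$. As $\h^2=(\pi_1)$, $\hh^2=(\pi_2)$, $\mathcal{Q}_1^2=(q_1)$ and $\mathcal{Q}_2^2=(q_2)$ are principal, each of $[\h],[\hh],[\mathcal{Q}_1],[\mathcal{Q}_2]$ has order dividing $2$. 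Consequently a subgroup generated by two of them has order $4$ if and only if the two generators together with their product are all nontrivial, and this is what I would verify in each case.

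First I would dispose of the classes involving an odd number of the primes above $p$ by means of the relative norm $N_{\kk/\qq}$, under which $i\mapsto1$, $\epsilon_{pq_1q_2}\mapsto\epsilon_{pq_1q_2}^2$, $\pi_1,\pi_2\mapsto\pi_1\pi_2=p$ and $q_j\mapsto q_j^2$. If $\h^{a_1}\hh^{a_2}\mathcal{Q}_1^{b_1}\mathcal{Q}_2^{b_2}$ were principal, say equal to $(\gamma)$, then $\gamma^2=i^{s}\epsilon_{pq_1q_2}^{t}\pi_1^{a_1}\pi_2^{a_2}q_1^{b_1}q_2^{b_2}$ for suitable $s,t$, and applying $N_{\kk/\qq}$ yields $p^{a_1+a_2}\in(\qq^\times)^2$. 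Because $\sqrt p\notin\qq$ (otherwise $q_1q_2$ would be a rational square), this is impossible whenever $a_1+a_2$ is odd; in particular $[\h]\neq1$, $[\hh]\neq1$ and $[\h\mathcal{Q}_1]\neq1$ hold unconditionally.

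It then remains to analyse the classes with an even number of the $\h,\hh$; for these the generator reduces to a positive rational $m$. Using that $\sqrt2\notin\kk$, an element $\alpha\in\qq^\times$ is a square in $\kk$ iff $\pm\alpha\in(\qq^\times)^2$, and $i\alpha$ is a square in $\kk$ iff $\pm2\alpha\in(\qq^\times)^2$; since the units reduce modulo squares to $1,i,\epsilon_{pq_1q_2},i\epsilon_{pq_1q_2}$, such a class is trivial iff one of $m,\,2m,\,\epsilon_{pq_1q_2}m,\,2\epsilon_{pq_1q_2}m$ is a square in $\qq$. For $m\in\{p,q_1\}$ neither $m$ nor $2m$ is a square in $\qq$, and $2\epsilon_{pq_1q_2}m$ is ruled out by Lemmas \ref{1:046} and \ref{3:105}, so triviality is equivalent to $\epsilon_{pq_1q_2}m\in(\qq^\times)^2$. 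Exactly the computations carried out in Propositions \ref{27} and \ref{31} identify these: $\epsilon_{pq_1q_2}p$ is a square in $\qq$ iff $2p(x\pm1)$ is a square in $\NN$, and $\epsilon_{pq_1q_2}q_1$ is a square in $\qq$ iff $2q_1(x\pm1)$ is a square in $\NN$. Hence $[\h\hh]=1$ iff $2p(x\pm1)$ is a square, while $[\mathcal{Q}_1]=1$ iff $2q_1(x\pm1)$ is a square.

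Assembling these facts finishes the proof. If $2p(x\pm1)$ is a square in $\NN$, then by the mutual exclusiveness of the three factorization cases $2q_1(x\pm1)$ is not, so $[\h]\neq1$, $[\mathcal{Q}_1]\neq1$ and $[\h\mathcal{Q}_1]\neq1$, whence $\langle[\h],[\mathcal{Q}_1]\rangle$ has order $4$. Otherwise $2p(x\pm1)$ is not a square, so $[\h\hh]\neq1$, which together with $[\h]\neq1$ and $[\hh]\neq1$ shows $\langle[\h],[\hh]\rangle$ has order $4$. The delicate step is the third paragraph: one must check that principality of the even-$\h$ classes really collapses to the single condition $\epsilon_{pq_1q_2}m\in(\qq^\times)^2$ — i.e.\ that the alternatives $2m$ and $2\epsilon_{pq_1q_2}m$ cannot be squares — and that the three conditions ``$2p(x\pm1)$, $2q_1(x\pm1)$, $2q_2(x\pm1)$ a square'' are mutually exclusive and exhaustive, both of which rest on Lemmas \ref{1:046}, \ref{1:048} and \ref{3:105}.
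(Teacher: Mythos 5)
Your proof is correct, and it takes a genuinely different, more self-contained route than the paper's. The paper settles every principality question by citation to its companion paper on the generators of the $2$-class group of $\QQ(\sqrt{d},i)$: Proposition 1 there yields that $\h$ and $\hh$ are not principal (because $\sqrt{p}\notin\QQ(\sqrt{pq_1q_2})$), and Proposition 2 with Remark 1 yields that $\h\hh$ is principal exactly when $2p(x\pm1)$ is a square in $\NN$, in which case $\Q_j$ and $\h\Q_j$ are not; the lemma then drops out in two lines. You instead re-derive these criteria from first principles: your relative-norm argument (applying $N_{\kk/\QQ(\sqrt{pq_1q_2})}$ to a putative generator of any class containing an odd number of the primes above $p$ forces $p$ to be a square in $\QQ(\sqrt{pq_1q_2})$) replaces the citation for $[\h]\neq1$, $[\hh]\neq1$, $[\h\Q_1]\neq1$; and your reduction of the principality of $\h\hh$ and $\Q_1$ to ``one of $m$, $2m$, $\epsilon_{pq_1q_2}m$, $2\epsilon_{pq_1q_2}m$ is a square in $\QQ(\sqrt{pq_1q_2})$'' (with $m=p$ or $q_1$), followed by elimination of $m$ and $2m$ (elementary rational-square checks) and of $2\epsilon_{pq_1q_2}m$ (it would force $m(x\pm1)$ to be a rational square, contradicting Lemma \ref{3:105}(2)), replaces the citation for the conditional statements. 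The remaining identifications you invoke are sound: $p\epsilon_{pq_1q_2}$ (resp.\ $q_1\epsilon_{pq_1q_2}$) is a square in $\QQ(\sqrt{pq_1q_2})$ if and only if $2p(x\pm1)$ (resp.\ $2q_1(x\pm1)$) is a square in $\NN$, and the three conditions are mutually exclusive (if, say, $2p(x+1)$ and $2q_1(x\mp1)$ were both squares, then $pq_1$ or $q_2$ would be a rational square), which is exactly what your final assembly needs. What each approach buys: the paper's proof is short but opaque without the companion paper in hand; yours makes visible where the hypotheses $2p(x\pm1)$ and $2q_1(x\pm1)$ actually come from --- the factorization $(x+1)(x-1)=y^2pq_1q_2$ together with Lemmas \ref{1:046} and \ref{3:105} --- at the cost of redoing work the authors had already published elsewhere.
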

\begin{proof}
Since $\mathcal{H}_j^2=(\pi_j)$, $1\leq j\leq2$, and since also  $\sqrt{e^2+(2f)^2}=\sqrt{p}\not\in\QQ(\sqrt{pq_1q_2})$, so,  according to \cite[Proposition 1]{AZT12-2},  $\mathcal{H}_j$ are not  principal  in $\kk$.\par
1. If $2p(x\pm1)$ is  a square in $\NN$, and since
$(\h\hh)^2=(p_1)$,  $\mathcal{Q}_j^2=(q_j)$ and $(\h\mathcal{Q}_j)^2=(q_j)$, hence by \cite[Proposition 2 and Remark 1]{AZT12-2},  $\h\hh$ is principal in $\kk$ and $\mathcal{Q}_j$, $\h\mathcal{Q}_j$ are not. Thus the result.

2. If $2p(x\pm1)$ is not a square in $\NN$, i.e. $2q_1(x\pm1)$ or $2q_2(x\pm1)$ is a square in $\NN$; then $\h\hh$ is not principal in $\kk$ and $\mathcal{Q}_1$ or $\mathcal{Q}_2$ is (by \cite[Proposition 2]{AZT12-2}). On the other hand, if $\mathcal{Q}_1$ (resp. $\mathcal{Q}_2$) is principal,  then $[\h\hh]=[\mathcal{Q}_2]$ (resp. $[\h\hh]=[\mathcal{Q}_1]$).
\end{proof}

 Determine now  generators of $ \mathrm{A}m_s(\kk/F)$ and $\mathrm{A}m(\kk/F)$. According to the ambiguous class number formula (\cite{Ch-33}),
the genus number, $[(\kk/F)^*:\kk]$, is given by:
 \begin{equation}\label{51}
|\mathrm{A}m(\kk/F)|=[(\kk/F)^*:\kk]=\frac{h(F)2^{t-1}}{[E_F: E_F\cap N_{\kk/F}(\kk^\times)]},
\end{equation}
where $h(F)$ is the class number of $F$ and $t$ is the number of finite and infinite primes of $F$ ramified in $\kk/F$. Moreover as the class number
of  $F$ is equal to $1$, so the formula \eqref{51} yields that
 \begin{equation}\label{56}|\mathrm{A}m(\kk/F)|=[(\kk/F)^*:\kk]=2^r,\end{equation}
 where $r=\text{rank}\mathbf{C}l_2(\mathds{k})=t-e-1$ and $2^e=[E_F: E_F\cap N_{\kk/F}(\kk^\times)]$ (see for example \cite{McPaRa-95}).
  The relation between  $|\mathrm{A}m(\kk/F)|$ and $|\mathrm{A}m_s(\kk/F)|$ is given by the following formula (see for example \cite{Lem-13}):
\begin{equation}\label{50}
\frac{|\mathrm{A}m(\kk/F)|}{|\mathrm{A}m_s(\kk/F)|}=[E_F\cap N_{\kk/F}(\kk^\times):N_{\kk/F}(E_\kk)].
\end{equation}
 To continue, we need  the following lemma.
\begin{lem}\label{57}
Let $p\equiv -q_1\equiv-q_2\equiv1\pmod4$ be different primes,  $F=\QQ(i)$ and $\kk=\QQ(\sqrt{pq_1q_2}, i)$.
\begin{enumerate}[\rm\indent1.]
  \item If $p\equiv1 \pmod8$, then $i$ is  a norm in $\kk/F$.
  \item If $p\equiv 5\pmod8$, then $i$ is not a norm in $\kk/F$.
\end{enumerate}
\begin{proof}
  We proceed as in Lemma 11 of \cite{AZT14-3}.
\end{proof}
\end{lem}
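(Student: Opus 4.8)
The plan is to decide the solvability of $i=N_{\kk/F}(\beta)$ by a purely local analysis. Since $\kk=F(\sqrt{pq_1q_2})$ is a quadratic, hence cyclic, extension of $F=\QQ(i)$, the Hasse norm principle applies: $i\in N_{\kk/F}(\kk^{\times})$ if and only if $i$ is a local norm at every place of $F$, i.e. the norm residue symbol $\left(\frac{i,\,d}{\mathfrak{p}}\right)=1$ for every place $\mathfrak{p}$ of $F$, with $d=pq_1q_2$. As $i$ is a unit, only the places dividing $2d$ together with the archimedean place can contribute, so I would list them: the unique (complex) archimedean place, the prime $(1+i)$ above $2$, the two split primes $\pi_1,\pi_2$ above $p$, and the inert primes $(q_1),(q_2)$ above $q_1,q_2$ (recall $q_j\equiv3\pmod4$).

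First I would clear away the places that contribute trivially. At the complex place every element is a norm, so its symbol is $1$. At an inert prime $(q_j)$ the residue field is $\FF_{q_j^2}$, and since $q_j^2-1\equiv0\pmod8$ the group $\FF_{q_j^2}^{\times}$ contains a primitive $8$-th root of unity; hence $i$ is a square there, $\left(\frac{i}{(q_j)}\right)=1$, and the symbol vanishes. At the dyadic prime $(1+i)$ the integer $d$ is a unit, and because $\sqrt{-1}=i$ already lies in $F$, the local extension $F_{(1+i)}(\sqrt{d})/F_{(1+i)}$ is unramified or trivial: adjoining the square root of the odd unit $d$ to $\QQ_2(i)$ produces no new ramification once $i$ is present. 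Since units are norms from an unramified extension, $i$ is a local norm and the symbol is again $1$.

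The decisive contribution comes from the primes above $p$. As $p\equiv1\pmod4$ splits, $p=\pi_1\pi_2$, and $v_{\pi_j}(d)=1$, so the symbol collapses to the quadratic residue character of the unit $i$ in $\FF_p$; by Euler's criterion $\left(\frac{i,\,d}{\pi_j}\right)=\left(\frac{i}{\pi_j}\right)=(-1)^{(p-1)/4}$, since $i^{2}=-1$. This is the classical fact that $-1$ is a fourth power modulo $p$ exactly when $p\equiv1\pmod8$: the symbol equals $1$ if $p\equiv1\pmod8$ and $-1$ if $p\equiv5\pmod8$. The same value occurs at $\pi_1$ and $\pi_2$, because the two square roots of $-1$ in $\FF_p$ differ by $-1$, which is a square as $p\equiv1\pmod4$. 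Collecting these computations, all local symbols are trivial precisely when $p\equiv1\pmod8$, giving part~(1); and when $p\equiv5\pmod8$ the symbols at $\pi_1,\pi_2$ equal $-1$, so $i$ fails to be a local norm there and hence is not a global norm, giving part~(2).

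The main point to get right — and the place where a careless argument goes astray — is that Hilbert reciprocity only forces the \emph{product} of all the local symbols to be $1$; it does not by itself make any individual symbol vanish. Here the two contributions at $\pi_1$ and $\pi_2$ are equal rather than inverse, so they reinforce instead of cancelling, and their common value $(-1)^{(p-1)/4}$ is exactly what controls the answer. The only genuinely delicate verification is therefore the dyadic one, namely that $\left(\frac{i,\,d}{(1+i)}\right)=1$ unconditionally; this is handled as above by observing that $F_{(1+i)}(\sqrt d)/F_{(1+i)}$ is unramified because $i\in F$, exactly as in Lemma~11 of \cite{AZT14-3}.
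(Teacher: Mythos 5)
Your proof is correct and follows essentially the same route as the paper's proof (which is delegated to Lemma 11 of the companion paper \cite{AZT14-3}): one verifies via the Hasse norm theorem that $i$ is a norm in $\kk/\QQ(i)$ if and only if every local norm residue symbol $\left(\frac{i,\,pq_1q_2}{\mathfrak p}\right)$ is trivial, the symbols at the complex place, at $(1+i)$, and at the inert primes $(q_1),(q_2)$ being automatically $1$, while the decisive symbols at the two primes above $p$ both equal $\left(\frac{i}{\pi_j}\right)=(-1)^{(p-1)/4}$, which is $1$ exactly when $p\equiv1\pmod 8$.
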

\begin{propo}\label{248}
Let $(\kk/F)^*$ denote the relative genus field of $\kk/F$. Then
\begin{enumerate}[\rm\indent1.]
\item
\begin{enumerate}[\rm i.]
\item If $p\equiv1\pmod8$, then $\k\varsubsetneq (\kk/F)^*$ and $[(\kk/F)^*:\k]=2$.
\item Else $\k= (\kk/F)^*$.
\end{enumerate}
\item Assume  $p\equiv1 \pmod8$.
\begin{enumerate}[\rm i.]
  \item If $2p(x\pm1)$ is a square in  $\NN$, then $\mathrm{Am}_s(\kk/\QQ(i))=\langle[\h], [\mathcal{Q}_1]\rangle.$
  \item Else, $\mathrm{Am}_s(\kk/\QQ(i))=\langle[\h], [\hh]\rangle.$
   \item  there  exist an unambiguous ideal
   $\mathcal{I}$ in $\kk/\QQ(i)$ of order  $2$ such that
   $$\mathrm{Am}(\kk/\QQ(i))=
    \left\{\begin{array}{ll}
     \langle[\h], [\mathcal{Q}_1], [\mathcal{I}]\rangle,& \text{ if }2p(x\pm1)\text{ is a square in }\NN,\\
     \langle[\h], [\hh], [\mathcal{I}]\rangle, & \text{ otherwise}.
     \end{array}\right.$$
\end{enumerate}
\item Assume  $p\equiv5\pmod8$, then
$$\mathrm{Am}(\kk/\QQ(i))=\mathrm{Am}_s(\kk/\QQ(i))=
    \left\{\begin{array}{ll}
     \langle[\h], [\mathcal{Q}_1]\rangle,& \text{ if }2p(x\pm1)\text{ is a square in }\NN,\\
     \langle[\h], [\hh]\rangle, & \text{ otherwise}.
     \end{array}\right.$$
\end{enumerate}
\end{propo}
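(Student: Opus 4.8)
The plan is to run everything through the ambiguous class number formula \eqref{56}, so the first task is to pin down $t$ and $e$. Counting the primes of $F=\QQ(i)$ ramifying in $\kk/F$: the prime $p$ splits as $\pi_1\pi_2$ and each $\pi_j$ ramifies (recall $\mathcal H_j^2=(\pi_j)$), while $q_1,q_2$ are inert in $\QQ(i)$ and the prime above each ramifies (since $\mathcal Q_j^2=(q_j)$); the prime $(1+i)$ above $2$ stays \emph{unramified} because $d=pq_1q_2\equiv1\pmod4$, and $F$ is totally imaginary so there is no ramification at infinity. Hence $t=4$. For $e$, note $E_F=\langle i\rangle$ and $N_{\kk/F}(i)=i\,\sigma(i)=i^2=-1$, so $-1$ is always a norm; Lemma~\ref{57} then gives $E_F\cap N_{\kk/F}(\kk^\times)=\langle i\rangle$ (so $e=0$) when $p\equiv1\pmod8$, and $E_F\cap N_{\kk/F}(\kk^\times)=\langle-1\rangle$ (so $e=1$) when $p\equiv5\pmod8$. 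Feeding this into \eqref{56} yields $|\mathrm{Am}(\kk/F)|=2^{\,t-e-1}$, equal to $8$ if $p\equiv1\pmod8$ and to $4$ if $p\equiv5\pmod8$.

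Next I would compute $N_{\kk/F}(E_\kk)$ in order to apply \eqref{50}. Since $q_1\equiv3\pmod4$ divides $d$, the negative Pell equation is unsolvable and $N(\epsilon_{pq_1q_2})=1$; by Lemma~\ref{3:105} the number $2\epsilon_{pq_1q_2}$ is not a square in $\QQ(\sqrt d)$, so Lemma~\ref{6} shows $\{\epsilon_{pq_1q_2}\}$ is a fundamental system of units of $\kk$, whence $E_\kk=\langle i\rangle\langle\epsilon_{pq_1q_2}\rangle$. As $N_{\kk/F}(i)=-1$ and $N_{\kk/F}(\epsilon_{pq_1q_2})=N_{\QQ(\sqrt d)/\QQ}(\epsilon_{pq_1q_2})=1$, we get $N_{\kk/F}(E_\kk)=\langle-1\rangle$. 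Then \eqref{50} gives $[E_F\cap N_{\kk/F}(\kk^\times):N_{\kk/F}(E_\kk)]=2$ when $p\equiv1\pmod8$ and $=1$ when $p\equiv5\pmod8$; combining with the previous paragraph, $|\mathrm{Am}_s(\kk/F)|=4$ in both cases.

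The explicit generators now come from Lemma~\ref{7}: the classes $[\h],[\mathcal Q_1]$ (resp. $[\h],[\hh]$) are strongly ambiguous because the squares of the corresponding ideals are generated by elements of $F$, and they span a subgroup of order $4$, which must equal $\mathrm{Am}_s(\kk/F)$ according as $2p(x\pm1)$ is or is not a square in $\NN$. This gives the description of $\mathrm{Am}_s$ in parts~2 and~3. When $p\equiv5\pmod8$ we have $|\mathrm{Am}|=|\mathrm{Am}_s|=4$, so $\mathrm{Am}(\kk/F)=\mathrm{Am}_s(\kk/F)$, proving part~3. When $p\equiv1\pmod8$ we have $[\mathrm{Am}:\mathrm{Am}_s]=2$; this nontrivial coset corresponds, under the isomorphism $\mathrm{Am}/\mathrm{Am}_s\cong (E_F\cap N_{\kk/F}(\kk^\times))/N_{\kk/F}(E_\kk)$ implicit in \eqref{50}, to the class $i\,N_{\kk/F}(E_\kk)$, arising from $i$ being a global norm without being the norm of a unit. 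I would exhibit a non-invariant ideal $\mathcal I$ representing this coset and check that $[\mathcal I]$ has order exactly $2$, giving $\mathrm{Am}(\kk/F)=\langle\mathrm{Am}_s,[\mathcal I]\rangle$, which is part~2(iii).

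Finally, for part~1 I would compare degrees over $\kk$. One has $[(\kk/F)^*:\kk]=|\mathrm{Am}(\kk/F)|$, while the absolute genus field satisfies $[\k:\kk]=4$, since $\mathrm{Gal}(\k/\kk)$ is elementary abelian and has exactly the three quadratic subextensions $\KK_1,\KK_2,\KK_3$, forcing it to be of type $(2,2)$. Moreover $\k\subseteq(\kk/F)^*$, because $\k/\QQ$ abelian makes $\k/F$ abelian and $\k/\kk$ is unramified. Hence the two genus fields coincide precisely when $|\mathrm{Am}(\kk/F)|=4$: thus $\k=(\kk/F)^*$ when $p\equiv5\pmod8$, whereas for $p\equiv1\pmod8$ we get $[(\kk/F)^*:\kk]=8$ and therefore $[(\kk/F)^*:\k]=2$ with $\k\varsubsetneq(\kk/F)^*$. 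The main obstacle is twofold: the local analysis at $2$ showing that $(1+i)$ does not ramify (so that $t=4$), and the verification that the extra ambiguous class for $p\equiv1\pmod8$ is represented by an ideal of order exactly $2$; the clean identity $N_{\kk/F}(i)=-1$ is what makes the unit-index computation go through uniformly in both congruence cases.
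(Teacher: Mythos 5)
Your proposal is correct and follows essentially the same route as the paper: it rests on the ambiguous class number formula \eqref{56} and the ratio formula \eqref{50}, Lemma \ref{57} to decide when $i$ is a norm, the unit computation $E_\kk=\langle i,\epsilon_{pq_1q_2}\rangle$ via Lemmas \ref{3:105} and \ref{6}, Lemma \ref{7} for the explicit generators of $\mathrm{Am}_s(\kk/F)$, and the degree comparison $[\k:\kk]=4$ for part 1. The only differences are minor: you derive the rank $r=t-e-1$ directly (computing $t=4$, since $(1+i)$ is unramified as $d\equiv1\pmod4$, and $e\in\{0,1\}$ from $N_{\kk/F}(i)=-1$) where the paper simply cites McCall--Parry--Ranalli; your value $|\mathrm{Am}(\kk/F)|=2^{3}$ for $p\equiv1\pmod 8$ is the correct one (the paper's ``$2^4$'' is a typo, as its own conclusion $|\mathrm{Am}_s(\kk/F)|=4$ confirms); and both you and the paper leave the order-$2$ claim for $[\mathcal{I}]$ at the same level of brevity, which is harmless since $h(F)=1$ forces $c^{1+\sigma}=1$, hence $c^2=1$, for every ambiguous class $c$.
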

\begin{proof}
1. As $\kk=\QQ(\sqrt{pq_1q_2}, i)$, so $[\k:\kk]=4$. Moreover, according to \cite[Proposition 2, p. 90]{McPaRa-95}, $r=\text{rank}\mathbf{C}l_2(\mathds{k})=3$ if $p\equiv1 \pmod8$ and $r=\text{rank}\mathbf{C}l_2(\mathds{k})=2$ if $p\equiv5 \pmod8$, so $[(\kk/F)^*:\kk]=4 \text{ or } 8$. Hence $[(\kk/F)^*:\k]=1 \text{ or } 2$, and the   results derived.\par
2. Note first that, by Lemma \ref{3:105}, $x+1$ and $x-1$ are never  squares in  $\NN$. Thus from Lemma \ref{6} we get $E_\kk=\langle i, \epsilon_{pq_1q_2}\rangle$.\\
  Assume  $p\equiv1 \pmod8$, hence $i$ is a norm in $\kk/\QQ(i)$ (Lemma \ref{57}), thus Formula \eqref{50} yields that
    \begin{align*}\dfrac{|\mathrm{Am}(\kk/\QQ(i))|}{|\mathrm{Am}_s(\kk/\QQ(i))|}=[E_{\QQ(i)}\cap N_{\kk/\QQ(i)}(\kk^{\times}):N_{\kk/\QQ(i)}(E_\kk)]
                                                                               =2 \end{align*}
since  $[E_{\QQ(i)}\cap
N_{\kk/\QQ(i)}(\kk^{\times}):N_{\kk/\QQ(i)}(E_\kk)]=[< i >: < -1>]=2$.\\
\indent On the other hand, as  $p\equiv1 \pmod8$, we have just shown that   $r=3$. Therefore     $|\mathrm{Am}(\kk/\QQ(i))|=2^4$ and thus $|\mathrm{Am}_s(\kk/\QQ(i))|=4$\\
\indent i. If $2p(x\pm1)$ is a square in $\NN$ which is equivalent to $2q_1q_2(x\pm1)$ is a square in $\NN$, then $\mathrm{Am}(\kk/\QQ(i))=2\mathrm{Am}_s(\kk/\QQ(i))$, hence by Lemma \ref{7} we get  $$\mathrm{Am}_s(\kk/\QQ(i))=\langle[\h], [\mathcal{Q}_1]\rangle.$$
\indent ii.  If $2q_1(x\pm1)$ or $2q_2(x\pm1)$ is a square in $\NN$,  then    Lemma \ref{7} yields that $$\mathrm{Am}_s(\kk/\QQ(i))=\langle[\h], [\hh]\rangle.$$
\indent Consequently, in the two cases  there exists an unambiguous  ideal $\mathcal{I}$ in $\kk/F$ of order $2$ such that
    $$\mathrm{Am}(\kk/\QQ(i))=
    \left\{\begin{array}{ll}
     \langle[\h], [\mathcal{Q}_1], [\mathcal{I}]\rangle,& \text{ if }2p(x\pm1)\text{ is a square in }\NN,\\
     \langle[\h], [\hh], [\mathcal{I}]\rangle, & \text{ else}.
     \end{array}\right.$$
    By Chebotarev theorem,  $\mathcal{I}$ can  always be chosen  as a prime ideal of $\kk$ above a prime $\ell$ in $\QQ$, which splits completely in $\kk$.

3. Assume   $p\equiv5\pmod8$, hence $i$ is not a norm in $\kk/\QQ(i)$ (Lemma \ref{57}). Proceeding similarly as in 2., we get
  $$\mathrm{Am}(\kk/\QQ(i))=\mathrm{Am}_s(\kk/\QQ(i))=
    \left\{\begin{array}{ll}
     \langle[\h], [\mathcal{Q}_1]\rangle,& \text{ if }2p(x\pm1)\text{ is a square in }\NN,\\
     \langle[\h], [\hh]\rangle, & \text{ else}.
     \end{array}\right.$$
      This completes the proof.
\end{proof}
\section{\bf{Capitulation}}
Let  $p$,  $q_1$ and $q_2$ be primes satisfying  $p\equiv-q_1\equiv-q_2\equiv1 \pmod 4$. Set  $\kk=\QQ(\sqrt{pq_1q_2}, i)$ and denote by $\G$ the genus field of   $\kk$, then $\k=\QQ(\sqrt p, \sqrt{q_1}, \sqrt{q_2}, i)$. The unramified quadratic extensions of $\kk$, abelian over $\QQ$,  are  $\KK_1=\kk(\sqrt{p})=\QQ(\sqrt{p},\sqrt{q_1q_2}, i)$, $\KK_2=\kk(\sqrt{q_1})=\QQ(\sqrt{q_1},\sqrt{pq_2},i)$ and  $\KK_3=\kk(\sqrt{q_2})=\QQ(\sqrt{q_2}, \sqrt{pq_1}, i)$.  Keep the notations  $\epsilon_{pq_1q_2}=x+y\sqrt{pq_1q_2}$ denoting the fundamental unit of   $\QQ(\sqrt{pq_1q_2})$ and  $p=e^2+4f^2=\pi_1\pi_2$, where $\pi_1=e+2if$, $\pi_2=e-2if$. Let $Q_{\kk}$ be the unit index of  $\kk$, and $\mathcal{H}_j$ be the ideal of  $\kk$ lies above $\pi_j$. Denote also by $\mathcal{Q}_j$ the prime ideal of $\kk$ above $q_j$ , $j=1, 2$.

In this section,  we will determine the classes of  $\mathbf{C}l_2(\kk)$, the  $2$-class group of $\kk$, that capitulate in  $\KK_j$, for all $j\in\{1, 2, 3\}$. For this we need the following theorem.
\begin{The}[\cite{HS82}]\label{1}
 Let $K/k$ be a cyclic extension of prime degree, then  the number of classes that capitulate in $K/k$ is:
 $[K:k][E_k:N_{K/k}(E_K)],$
 where $E_k$ and $E_K$ are the  unit groups of $k$ and $K$ respectively.
\end{The}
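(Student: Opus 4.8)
The plan is to identify the capitulation kernel with a first cohomology group and then to evaluate its order by a Herbrand quotient computation. Write $G=\mathrm{Gal}(K/k)$, a cyclic group of prime order $\ell=[K:k]$, let $J\colon\mathbf{C}l(k)\to\mathbf{C}l(K)$ be the extension-of-ideals map, and let $A=\{\mathfrak a\in I_k:\mathfrak aO_K\text{ is principal in }K\}$ be the group of ideals of $k$ that capitulate, so that $\ker J=A/P_k$, where $I_k$ and $P_k$ denote the fractional and principal ideals of $k$. First I would build a homomorphism $\psi\colon A\to H^1(G,E_K)$: for $\mathfrak a\in A$ pick $\beta\in K^\times$ with $\mathfrak aO_K=(\beta)$; since $\mathfrak a$ comes from $k$, the ideal $(\beta)$ is $G$-invariant, so $\beta^{\sigma}/\beta\in E_K$ for every $\sigma\in G$ and $\sigma\mapsto\beta^{\sigma}/\beta$ is a $1$-cocycle. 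Replacing $\beta$ by a unit multiple changes this cocycle only by a coboundary, so $\psi(\mathfrak a):=[\sigma\mapsto\beta^{\sigma}/\beta]$ is well defined.

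The heart of the argument is to show that $\psi$ induces an isomorphism $\ker J\cong H^1(G,E_K)$. For the kernel: if $\psi(\mathfrak a)=0$ then $\beta^{\sigma}/\beta=u^{\sigma}/u$ for some $u\in E_K$ and all $\sigma$, whence $\beta/u$ is $G$-fixed, i.e. $\beta/u\in k^\times$; thus $\mathfrak aO_K=(\beta/u)O_K$, and since extension of ideals $I_k\hookrightarrow I_K$ is injective, $\mathfrak a=(\beta/u)$ is principal in $k$. Hence $\ker\psi=P_k$. For surjectivity I would invoke Hilbert's Theorem~90, $H^1(G,K^\times)=0$: any cocycle with values in $E_K\subseteq K^\times$ has the form $\sigma\mapsto\beta^{\sigma}/\beta$ with $\beta\in K^\times$, and then $(\beta)$ is a $G$-invariant ideal of $K$. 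Here the hypothesis that $K/k$ be \emph{unramified at the finite primes} enters decisively: for such an extension every $G$-invariant ideal of $K$ is the extension $\mathfrak aO_K$ of an ideal $\mathfrak a$ of $k$ (the invariant exponents are constant on the $G$-orbits $\{\mathfrak P\mid\mathfrak p\}$, and $\mathfrak pO_K=\prod_{\mathfrak P\mid\mathfrak p}\mathfrak P$). Thus $\mathfrak a\in A$ and $\psi(\mathfrak a)$ is the given class, so $\psi$ is onto and $\ker J=A/P_k\cong H^1(G,E_K)$.

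It remains to count $H^1(G,E_K)$. Since $G$ is cyclic its cohomology is $2$-periodic, so $|H^1(G,E_K)|=|\widehat H^{0}(G,E_K)|/q(E_K)$, where $q$ is the Herbrand quotient and $\widehat H^{0}(G,E_K)=E_K^{G}/N_{K/k}(E_K)=E_k/N_{K/k}(E_K)$ has order $[E_k:N_{K/k}(E_K)]$. I would then obtain $q(E_K)=1/[K:k]$ from Dirichlet's unit theorem: the logarithmic embedding places $E_K\otimes\QQ$ in the exact sequence of $\QQ[G]$-modules
$$0\to E_K\otimes\QQ\to\bigoplus_{v\mid\infty}\mathrm{Ind}_{G_w}^{G}\QQ\to\QQ\to0,$$
where $v$ runs over the archimedean places of $k$, $G_w$ is the decomposition group of a place $w\mid v$ of $K$, and the last term is the trivial module. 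Passing to $G$-stable lattices (the Herbrand quotient being an invariant of the rational $G$-module), multiplicativity together with Shapiro's lemma gives $q(E_K)\,q(\ZZ)=\prod_{v\mid\infty}q\big(\mathrm{Ind}_{G_w}^{G}\ZZ\big)=\prod_{v\mid\infty}|G_w|$, and since the trivial module has $q(\ZZ)=|G|$ we get $q(E_K)=|G|^{-1}\prod_{v\mid\infty}|G_w|$. In the situation relevant here $k$ and $K$ are totally imaginary (indeed $k=\kk$ contains $i$), so every $G_w$ is trivial and $q(E_K)=|G|^{-1}=[K:k]^{-1}$. Combining, $|\ker J|=|H^1(G,E_K)|=[K:k]\,[E_k:N_{K/k}(E_K)]$, as claimed.

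The hard part will be the Herbrand quotient evaluation: one must correctly pin down the rational $G$-module structure of the units and the local decomposition groups at infinity, since it is exactly the factor $1/[K:k]$ that produces the multiplicity $[K:k]$ in the formula — equivalently, it is this step that recovers Hilbert's Theorem~94. A second essential point is the unramified hypothesis, which is what makes $\psi$ surjective; without it a $G$-invariant ideal of $K$ need not descend to $k$ and a ramification correction term would appear.
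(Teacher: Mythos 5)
Your proposal is correct, but there is nothing in the paper to compare it against: Theorem~\ref{1} is stated as a citation of Heider--Schmithals \cite{HS82} and no proof is given in the paper. Your argument is in substance the classical one (and the one underlying \cite{HS82}): identify $\ker J$ with $H^1(G,E_K)$ via Hilbert~90 together with descent of $G$-invariant ideals, then evaluate $|H^1(G,E_K)|=[E_k:N_{K/k}(E_K)]\cdot|G|\big/\prod_{v\mid\infty}|G_w|$ by the Herbrand quotient of the unit lattice. The most valuable thing you did is to restore the hypothesis that the statement in the paper silently omits: as printed (``cyclic of prime degree'', with no unramifiedness assumption) the theorem is false --- for $k=\QQ$, $K=\QQ(\sqrt 2)$ one has $N_{K/\QQ}(1+\sqrt2)=-1$, so the formula would give $|\ker J|=2$ although $\mathbf{C}l(\QQ)$ is trivial. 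The original result of \cite{HS82} concerns \emph{unramified} prime-cyclic extensions, and your proof shows precisely where each part of that hypothesis enters: unramifiedness at the finite primes makes $\psi$ surjective (every $G$-invariant ideal of $K$ descends to $k$), and absence of ramification at the infinite places makes every $G_w$ trivial, which is what turns $\prod_{v\mid\infty}|G_w|\big/|G|$ into the factor $1/[K:k]$. In the paper's applications both conditions hold automatically, since the $\KK_j/\kk$ are unramified and $\kk=\QQ(\sqrt{pq_1q_2},i)$ is totally imaginary, so your restriction to that situation at the end is harmless. Two small steps you glossed over, both standard: $E_K$ has torsion, so passing from $q(E_K)$ to the Herbrand quotient of a lattice in $E_K\otimes\QQ$ uses that finite modules have trivial Herbrand quotient; and identifying $E_K\otimes\QQ$ with the sum-zero hyperplane as a $\QQ[G]$-module (not merely after extension to $\RR$, which is what the logarithmic embedding directly gives) uses that $\QQ[G]$-modules isomorphic over $\RR$ are isomorphic over $\QQ$. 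Neither affects the correctness of the argument; also note that primality of $[K:k]$ is never used, only cyclicity.
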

\subsection{The number of classes  capitulating in each $\KK_j$}~\\
Recall that $\kappa_{\KK_j}$ denotes  the capitulation kernel of the unramified extension $\KK_j/\kk$.
\begin{The}\label{226}
Let $\KK_j$, $1\leq j\leq3$, be the three unramified quadratic extensions of $\kk$ defined above. Then
\begin{enumerate}[\rm1.]
\item  $|\kappa_{\KK_1}|=4$.
\item Let $\epsilon_{pq_2}=a+b\sqrt{pq_2}$, then
\begin{enumerate}[\rm i.]
\item If  $a\pm1$ is a square in $\NN$ and $2q_1(x+1)$, $2q_1(x-1)$ are not, then $|\kappa_{\KK_2}|=4$.
\item In the other cases  $|\kappa_{\KK_2}|=2$.
\end{enumerate}
\item Let  $\epsilon_{pq_1}=a+b\sqrt{pq_1}$, then
\begin{enumerate}[\rm i.]
\item If  $a\pm1$ is a square in $\NN$  and    $2q_2(x+1)$, $2q_2(x-1)$ are not, then $|\kappa_{\KK_3}|=4$.
\item In the other cases  $|\kappa_{\KK_3}|=2$.
\end{enumerate}
\end{enumerate}
\end{The}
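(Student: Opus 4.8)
The whole computation is driven by Theorem~\ref{1}: each $\KK_j/\kk$ is an unramified cyclic extension of prime degree $2$, so
\[
|\kappa_{\KK_j}|=[\KK_j:\kk]\,[E_\kk:N_{\KK_j/\kk}(E_{\KK_j})]=2\,[E_\kk:N_{\KK_j/\kk}(E_{\KK_j})].
\]
Thus the plan is to reduce everything to one norm index for each $j$. As recorded in the proof of Proposition~\ref{248} (via Lemmas~\ref{6} and~\ref{3:105}), I may take $E_\kk=\langle i,\epsilon_{pq_1q_2}\rangle$ with $\langle i\rangle=W_\kk$ of order $4$ and $\epsilon_{pq_1q_2}$ of infinite order, so $E_\kk\simeq\ZZ/4\ZZ\oplus\ZZ$; likewise $W_{\KK_j}=\langle i\rangle$ since no $\KK_j$ contains $\sqrt2$, and Propositions~\ref{27}, \ref{31}, \ref{32} give $E_{\KK_j}=\langle i\rangle\times\langle\,\text{FSU}\,\rangle$. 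So it suffices to push each fundamental unit down to $\kk$ and identify the subgroup these norms generate.

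The engine for this is the following norm toolkit. Let $\sigma_j$ generate $\mathrm{Gal}(\KK_j/\kk)$; it fixes $\kk$ (hence $i$ and $\epsilon_{pq_1q_2}$) and negates each new square root. First I would record: $N_{\KK_j/\kk}(i)=i^2=-1$; for a unit $\epsilon_m$ of a real quadratic subfield with $\sqrt m\notin\kk$ one has $N_{\KK_j/\kk}(\epsilon_m)=N_{\QQ(\sqrt m)/\QQ}(\epsilon_m)$, which equals $-1$ for $m=p$ and $+1$ for $m\in\{q_1,q_2,q_1q_2,pq_1,pq_2\}$ (the latter being exactly the Pell relations $a^2-1=b^2m$ already noted inside Propositions~\ref{27} and~\ref{31}, together with Lemma~\ref{1:048}); $N_{\KK_j/\kk}(\epsilon_{pq_1q_2})=\epsilon_{pq_1q_2}^2$; and for a square-root unit $\sqrt\eta$ with $\eta$ a product of the above, $N_{\KK_j/\kk}(\sqrt\eta)^2=\eta\,\sigma_j(\eta)$, where $\sigma_j(\epsilon_m)=\epsilon_m^{-1}$ for the norm-$1$ units and $\sigma_j(\epsilon_{pq_1q_2})=\epsilon_{pq_1q_2}$. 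This pins $N_{\KK_j/\kk}(\sqrt\eta)$ down to sign, and the reality of $\sqrt\eta$ (resp. the genuinely complex character of $\sqrt{i\epsilon_{q_1}}$) forces the value into $\pm\epsilon_{pq_1q_2}^{k}$ (resp. $\pm i$).

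Feeding in the fundamental systems then finishes each part. For $\KK_1$, where $Q_{\KK_1}=1$ and the system is entirely real, the square-root unit norms to $\pm\epsilon_{pq_1q_2}$ while $\epsilon_p,\epsilon_{q_1q_2},i$ norm to $-1,+1,-1$; hence $N_{\KK_1/\kk}(E_{\KK_1})=\langle-1,\epsilon_{pq_1q_2}\rangle$, whose free part is full but whose torsion has dropped to $\langle-1\rangle$, giving index $2$ and $|\kappa_{\KK_1}|=4$. For $\KK_2$ (and $\KK_3$ by the $q_1\leftrightarrow q_2$ symmetry) the unit $\sqrt{i\epsilon_{q_1}}$ is always present and norms to $\pm i$, so the torsion $\langle i\rangle$ is always recovered in full; the index is therefore decided entirely by the free part, i.e. by whether the norm group contains $\epsilon_{pq_1q_2}$ or only $\epsilon_{pq_1q_2}^2$. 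It contains only $\epsilon_{pq_1q_2}^2$ exactly when the system consists of $\epsilon_{pq_1q_2}$ itself (norm $\epsilon_{pq_1q_2}^2$) together with the norm-$(\pm1)$ unit $\sqrt{\epsilon_{q_1}\epsilon_{pq_2}}$ --- that is, precisely the branch of Proposition~\ref{31} in which $2q_1(x\pm1)$ is not a square while $a\pm1$ is; there the index is $2$ and $|\kappa_{\KK_2}|=4$. In every remaining branch some square-root unit carries an odd power of $\epsilon_{pq_1q_2}$ and so norms to $\pm\epsilon_{pq_1q_2}$, whence $N_{\KK_2/\kk}(E_{\KK_2})=E_\kk$, the index is $1$, and $|\kappa_{\KK_2}|=2$.

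The routine part is the per-branch bookkeeping; the content I would emphasize is the clean dichotomy ``the free part is captured fully versus only up to squares,'' together with the observation that the torsion behaviour is dictated by Hasse's unit index: $Q_{\KK_2}=Q_{\KK_3}=2$ forces the complex unit $\sqrt{i\epsilon_{q_1}}$ into the system and so always restores $\langle i\rangle$, whereas $Q_{\KK_1}=1$ leaves the system real and costs one factor of $2$ in the torsion. The one delicate point is the sign/root-of-unity ambiguity in $N_{\KK_j/\kk}(\sqrt\eta)$, which is why I would systematically separate the reality of each square-root unit from the computation of its square; once that is settled, the index in each branch is immediate, and the symmetry $q_1\leftrightarrow q_2$ transfers the $\KK_2$ analysis to $\KK_3$ verbatim.
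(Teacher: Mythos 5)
Your proposal is correct and follows essentially the same route as the paper: apply the Heider--Schmithals formula (Theorem \ref{1}) with $E_\kk=\langle i,\epsilon_{pq_1q_2}\rangle$, read off $E_{\KK_j}$ from Propositions \ref{27}, \ref{31}, \ref{32}, and compute the norm index, finding index $2$ for $\KK_1$, index $2$ for $\KK_2,\KK_3$ exactly in the branch where the system is $\{\sqrt{\epsilon_{q_1}\epsilon_{pq_2}},\epsilon_{pq_1q_2},\sqrt{i\epsilon_{q_1}}\}$ (norm group $\langle i,\epsilon_{pq_1q_2}^2\rangle$), and index $1$ otherwise. The only difference is presentational: the paper simply asserts the resulting norm groups, whereas you spell out the Galois-theoretic norm computations ($N(\sqrt\eta)^2=\eta\,\sigma_j(\eta)$, the sign ambiguity, and the role of $\sqrt{i\epsilon_{q_1}}$ in restoring the torsion $\langle i\rangle$), which is a legitimate filling-in of the same argument.
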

\begin{proof}
 Note first that, according to   Lemma  \ref{3:105},  $x+1$ and $x-1$ are never squares in $\NN$, hence by  Lemma \ref{6}, $E_{\kk}=\langle i, \epsilon_{pq_1q_2}\rangle$.
 \begin{enumerate}[\rm1.]
 \item By   Proposition \ref{27} we have
  $E_{\KK_1}=\langle  i, \epsilon_{p}, \epsilon_{q_1q_2}, \sqrt{\epsilon_{pq_1q_2}}\rangle$ or
   $E_{\KK_1}=\langle  i, \epsilon_{p}, \epsilon_{q_1q_2},\\ \sqrt{\epsilon_{q_1q_2}\epsilon_{pq_1q_2}}\rangle$, hence $N_{\KK_1/\kk}(E_{\KK_1})=\langle -1, \epsilon_{pq_1q_2}\rangle$. Thus   $[E_{\kk}:N_{\KK_1/\kk}(E_{\KK_1})]=2$. Therefore  Theorem \ref{1} implies that  $|\kappa_{\KK_1}|=4$.

\item  i.  If  $a\pm1$ is a square in $\NN$ and $2q_1(x+1)$, $2q_1(x-1)$ are not, then    Proposition \ref{31}(2)(i) yields that $N_{\KK_2/\kk}(E_{\KK_2})=\langle i, \epsilon_{pq_1q_2}^2\rangle$, hence $[E_{\kk}:N_{\KK_2/\kk}(E_{\KK_2})]=2$. Thus  Theorem  \ref{1} implies that  $|\kappa_{\KK_2}|=4$.\\
   ii.  The other cases are grouped together in  Proposition \ref{31} (assertions 1, 2), then  $N_{\KK_2/\kk}(E_{\KK_2})=\langle i, \epsilon_{pq_1q_2}\rangle$. Thus  $[E_{\kk}:N_{\KK_2/\kk}(E_{\KK_2})]=1$,  and Theorem  \ref{1} implies that   $|\kappa_{\KK_2}|=2$.
\item This point is similarly treated.
\end{enumerate}
\end{proof}
\subsection{Capitulation in $\KK_1$}
\begin{The}\label{233}
Let   $p$,  $q_1$ and $q_2$ be different  primes such that  $p\equiv-q_1\equiv-q_2\equiv1 \pmod 4$. Put  $\kk=\QQ(\sqrt{pq_1q_2}, i)$,
$\KK_1=\QQ(\sqrt{p}, \sqrt{q_1q_2}, i)$ and $\epsilon_{pq_1q_2}=x+y\sqrt{pq_1q_2}$, then
\begin{enumerate}[\rm1.]
\item If $2p(x\pm1)$ is a square in  $\NN$, then $\kappa_{\KK_1}=\langle[\h], [\Q_1]\rangle$.
\item Else,  $\kappa_{\KK_1}=\langle[\h], [\hh]\rangle$.
\end{enumerate}
\end{The}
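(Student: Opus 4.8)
The plan is to identify $\kappa_{\KK_1}$ by combining the order count already at hand with the explicit capitulation of the proposed generators. By Theorem \ref{226} we have $|\kappa_{\KK_1}|=4$, and by Lemma \ref{7} the group $\langle[\h],[\Q_1]\rangle$ (resp. $\langle[\h],[\hh]\rangle$) has order $4$ in the first (resp. second) case; so it suffices to show that the two relevant generators lie in $\kappa_{\KK_1}$, and equality then follows by comparing orders. Two general facts cut the work down. Since $N_{\KK_1/\kk}\circ J_{\KK_1}$ is the squaring map on $\mathbf{C}l_2(\kk)$, any class killed by $J_{\KK_1}$ is $2$-torsion, so $\kappa_{\KK_1}\subseteq\mathbf{C}l_2(\kk)[2]$; and because $h(\QQ(i))=1$ forces $c^{1+\sigma}=1$ for every $c\in\mathbf{C}l_2(\kk)$ (with $\sigma$ generating $\mathrm{Gal}(\kk/\QQ(i))$), one has $c^\sigma=c^{-1}$ for all $c$, whence $\mathrm{Am}(\kk/\QQ(i))=\mathbf{C}l_2(\kk)[2]$.

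When $p\equiv5\pmod8$ the rank of $\mathbf{C}l_2(\kk)$ equals $2$ (Proposition \ref{248}), so $|\mathbf{C}l_2(\kk)[2]|=2^2=4=|\kappa_{\KK_1}|$, and therefore $\kappa_{\KK_1}=\mathbf{C}l_2(\kk)[2]=\mathrm{Am}(\kk/\QQ(i))=\mathrm{Am}_s(\kk/\QQ(i))$; by Proposition \ref{248}(3) this last group is exactly $\langle[\h],[\Q_1]\rangle$ or $\langle[\h],[\hh]\rangle$ according to the two cases, and we are done. Thus the real content lies in the case $p\equiv1\pmod8$, where $\mathbf{C}l_2(\kk)[2]$ has order $8$ and one must single out the correct subgroup of order $4$.

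For $p\equiv1\pmod8$ I would check that each generator of $\mathrm{Am}_s(\kk/\QQ(i))$ capitulates, which by the order count gives $\mathrm{Am}_s(\kk/\QQ(i))\subseteq\kappa_{\KK_1}$ and hence equality. The class $[\Q_1]$ is immediate: the proof of Proposition \ref{27} produces $b_1,b_2\in\ZZ$ with $\sqrt{\epsilon_{q_1q_2}}=b_1\sqrt{q_1}+b_2\sqrt{q_2}$, and then $\gamma=b_1q_1+b_2\sqrt{q_1q_2}\in\KK_1$ satisfies $\gamma^2=q_1\epsilon_{q_1q_2}$, so $(\gamma)=\Q_1\OO_{\KK_1}$ and $[\Q_1]\in\kappa_{\KK_1}$. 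For the primes above $p$, the equalities $(\h\hh)^2=(p)=(\sqrt p)^2$ give $\h\hh\OO_{\KK_1}=(\sqrt p)$, so $[\h\hh]$ capitulates; consequently it is enough to capitulate $\h$ alone, after which $[\hh]=[\h\hh][\h]^{-1}$ capitulates as well.

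Capitulating $\h$ itself is the main obstacle. The aim is to find a unit $w$ of $\KK_1$, drawn from the fundamental system $\{i,\epsilon_p,\epsilon_{q_1q_2},\sqrt{\epsilon_{pq_1q_2}}\}$ of Proposition \ref{27}, for which $\pi_1 w$ is a square in $\KK_1$; then $(\sqrt{\pi_1 w})=\h\OO_{\KK_1}$ and $[\h]\in\kappa_{\KK_1}$. The natural lead is the identity $\epsilon_p^2-1=2t\epsilon_p$, valid because $N(\epsilon_p)=-1$ (write $\epsilon_p=t+u\sqrt p$); it yields $(\epsilon_p+i)^2=2(t+i)\epsilon_p$, and the relation $t^2+1=pu^2$ links $t+i$ to the prime $\pi_1=e+2if$ above $p$. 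Converting this proportionality into a genuine equation $\pi_1 w=\delta^2$ with $\delta\in\KK_1$, and then verifying that $(\delta)=\h\OO_{\KK_1}$ rather than $\hh\OO_{\KK_1}$, is the one truly computational point; it is here that the unit $\sqrt{\epsilon_{pq_1q_2}}$ and the representation $p=e^2+4f^2$ are used to absorb the stray factor $2$ that cannot be removed inside $\QQ(\sqrt p,i)$, and it explains why $p\equiv1\pmod8$ resists the order argument that settled $p\equiv5\pmod8$. Everything else is bookkeeping with the orders and with the unit systems of Proposition \ref{27}.
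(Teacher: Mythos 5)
Your skeleton is sound and, for the most part, is the paper's own: $|\kappa_{\KK_1}|=4$ from Theorem \ref{226}, the order-$4$ subgroups from Lemma \ref{7}, and your principalization of $\Q_1$ via $\gamma=b_1q_1+b_2\sqrt{q_1q_2}$, $\gamma^2=q_1\epsilon_{q_1q_2}$, is exactly the paper's argument. Your handling of $p\equiv5\pmod 8$ is a genuinely different and self-contained shortcut that the paper does not use (the paper's proof makes no case distinction on $p$ modulo $8$): from $h(\QQ(i))=1$ you get $c^{\sigma}=c^{-1}$, hence $\mathrm{Am}(\kk/\QQ(i))=\mathbf{C}l_2(\kk)[2]$, and the two order counts $|\kappa_{\KK_1}|=|\mathbf{C}l_2(\kk)[2]|=4$ identify $\kappa_{\KK_1}$ with $\mathrm{Am}_s(\kk/\QQ(i))$, which Proposition \ref{248}(3) names explicitly; no principalization needs to be exhibited at all in that case. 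That part is correct and arguably cleaner than the paper.

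The gap is the step you flag yourself: for $p\equiv1\pmod 8$ you never prove that $[\h]$ capitulates in $\KK_1$, and that is the heart of the theorem (in case 1 it is one of the two generators; in case 2 it is needed to pass from $[\h\hh]$ to both $[\h]$ and $[\hh]$). Be aware that the paper does not prove it internally either: it simply quotes Proposition 6.3 of \cite{AZT-16}. So for your proof to stand alone you must finish your sketch, and it does finish -- in fact more easily than you suggest. Writing $\epsilon_p=t+u\sqrt p$ with $N(\epsilon_p)=-1$ (integer case; the half-integer case $\epsilon_p=(T+U\sqrt p)/2$ is identical with $T\pm2i$), one has $(\epsilon_p+i)^2=2(t+i)\epsilon_p$ and $(t+i)(t-i)=pu^2$; parity forces $t$ even, so $t+i$ and $t-i$ are coprime in $\ZZ[i]$, and unique factorization in the PID $\ZZ[i]$ gives $t+i=i^k\pi_jv^2$ for exactly one $j$. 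The ``stray factor $2$'' is absorbed inside $\QQ(i)$ itself, since $2=-i(1+i)^2$: thus $i^{k+3}\pi_j\epsilon_p=\bigl((\epsilon_p+i)/((1+i)v)\bigr)^2$, so either $\pi_j\epsilon_p$ or $i\pi_j\epsilon_p$ is a square $\delta^2$ in $\QQ(\sqrt p,i)\subseteq\KK_1$ (recall $-1=i^2$ is a square). Then $(\mathcal{H}_j\OO_{\KK_1})^2=(\pi_j)=(\delta)^2$ forces $\mathcal{H}_j\OO_{\KK_1}=(\delta)$. Neither $\sqrt{\epsilon_{pq_1q_2}}$, nor $p=e^2+4f^2$, nor the hypothesis $p\equiv1\pmod8$ is needed here, and your worry about whether $\delta$ generates $\h\OO_{\KK_1}$ rather than $\hh\OO_{\KK_1}$ is immaterial: since $[\h\hh]$ capitulates by your $\sqrt p$ argument, capitulation of either one of $[\h]$, $[\hh]$ yields the other. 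With that paragraph inserted, your proof is complete and, unlike the paper's, independent of the external citation.
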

\begin{proof}
We have already shown, in Lemma \ref{7}, that $\h$, $\hh$,  $\Q_j$ and $\mathcal{H}_k\Q_j$, $j, k=1,\ 2$, are not principal  in  $\kk$.
On the other hand, by  Proposition 6.3 of \cite{AZT-16} $\h$ and $\hh$ capitulate in $\KK_1$.

1. If $2p(x\pm1)$ is a square in  $\NN$, then  by  \cite[Proposition 2]{AZT12-2}  $\h\hh$ is principal in $\kk$, i.e. $[\h]=[\hh]$. The proof of  the Proposition \ref{27}, allows us to conclude that $q_1\epsilon_{q_1q_2}$ and $q_2\epsilon_{q_1q_2}$ are squares in  $\KK_1$; hence  there exists  $\gamma\in\KK_1$ such that $\Q_1^2=(\gamma^2)$. Thus $\Q_1=(\gamma)$, so the result.\\
2. If $2p(x+1)$ and $2p(x-1)$ are not squares in $\NN$, then $\h\hh$ is not principal in $\kk$; which yields the result.
\end{proof}
\begin{exams}~\\
$1.$ The case where $2p(x\pm1)$ is a square in $\NN$.
\footnotesize
\begin{longtable}{| c | c | c | c | c | c | c | }
\hline
 $d = p.q_1.q_2$  &  $2p(x+1)$ & $2p(x-1)$ & $\h\hh$ in $\kk$ & $\Q_1$ in $\kk$  & $\h$ & $\Q_1$ \\
\hline
\endfirsthead
\hline
 $d = p.q_1.q_2$  &  $2p(x+1)$ & $2p(x-1)$ & $\h\hh$ in $\kk$ & $\Q_1$ in $\kk$  & $\h$ & $\Q_1$ \\
\hline
\endhead
$105=5.3.7$ & $420$ & $400=20^2$ & $[0, 0]~$ & $[2, 0]~$ & $[0, 0]~$ & $[0, 0]~$ \\ \hline
$345=5.23.3$ & $67620$ & $67600=260^2$ & $[0, 0]~$ & $[2, 0]~$ & $[0, 0]~$ & $[0, 0]~$ \\ \hline
$357=17.3.7$ & $357$ & $289=17^2$ & $[0, 0, 0]~$ & $[1, 1, 0]~$ & $[0, 0]~$ & $[0, 0]~$ \\ \hline
$561=17.11.3$ & $17774724$ & $17774656=4216^2$ & $[0, 0, 0]~$ & $[0, 0, 1]~$ & $[0, 0, 0]~$ & $[0, 0, 0]~$ \\ \hline
$645=5.3.43$ & $645$ & $625=25^2$ & $[0, 0]~$ & $[4, 0]~$ & $[0, 0]~$ & $[0, 0]~$ \\ \hline
$705=5.47.3$ & $2371620$ & $2371600=1540^2$ & $[0, 0]~$ & $[6, 0]~$ & $[0, 0]~$ & $[0, 0]~$ \\ \hline
$805=5.7.23$ & $7245$ & $7225=85^2$ & $[0, 0]~$ & $[4, 0]~$ & $[0, 0]~$ & $[0, 0]~$ \\ \hline
\end{longtable}
\normalsize
$2.$ The case where  $2p(x+1)$ and $2p(x-1)$ are not squares in $\NN$.
\small
\begin{longtable}{| c | c | c | c | c | c | }
\hline
 $d = p.q_1.q_2$  &  $2p(x+1)$ & $2p(x-1)$ & $\h\hh$ in $\kk$  & $\h$ & $\hh$ \\
\hline
\endfirsthead
\hline
 $d = p.q_1.q_2$  &  $2p(x+1)$ & $2p(x-1)$ & $\h\hh$ in $\kk$ & $\h$ & $\hh$ \\
\hline
\endhead
$165=5.3.11$ & $75$ & $55$ & $[2, 0]~$ & $[0, 0]~$ & $[0, 0]~$ \\ \hline
$273=13.7.3$ & $18928$ & $18876$ & $[2, 0]~$ & $[0, 0]~$ & $[0, 0]~$ \\ \hline
$285=5.3.19$ & $95$ & $75$ & $[4, 0]~$ & $[0, 0]~$ & $[0, 0]~$ \\ \hline
$429=13.11.3$ & $1911$ & $1859$ & $[4, 0]~$ & $[0, 0]~$ & $[0, 0]~$ \\ \hline
$465=5.3.31$ & $158720$ & $158700$ & $[4, 0]~$ & $[0, 0]~$ & $[0, 0]~$ \\ \hline
$609=29.7.3$ & $35130368$ & $35130252$ & $[4, 0]~$ & $[0, 0]~$ & $[0, 0]~$ \\ \hline
$665=5.7.19$ & $137200$ & $137180$ & $[6, 0]~$ & $[0, 0]~$ & $[0, 0]~$ \\ \hline
$741=13.19.3$ & $3211$ & $3159$ & $[6, 0]~$ & $[0, 0]~$ & $[0, 0]~$ \\ \hline
$1533=73.3.7$ & $37303$ & $37011$ & $[3, 1, 0]~$ & $[0, 0, 0]~$ & $[0, 0, 0]~$ \\ \hline
\end{longtable}
\normalsize
\end{exams}
\subsection{Capitulation in $\KK_2$}~\\
Let   $p$,  $q_1$ and $q_2$ be different  primes such that  $p\equiv-q_1\equiv-q_2\equiv1 \pmod 4$. Put  $\kk=\QQ(\sqrt{pq_1q_2}, i)$,
$\KK_2=\QQ(\sqrt{q_1},\sqrt{pq_2}, i)$ and $\epsilon_{pq_2}=a+b\sqrt{pq_2}$.
\begin{lem}
If $a\pm1$ is a square in $\NN$, then $p\equiv1\pmod8$.
\end{lem}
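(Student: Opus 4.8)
The plan is to convert the square hypothesis on $a\pm1$ into a Pell-type identity and then extract a quadratic-residue condition modulo $p$. As a preliminary, I would record that $pq_2\equiv 3\pmod 4$ (since $p\equiv1$ and $q_2\equiv-1\pmod4$), so $\QQ(\sqrt{pq_2})$ has $N(\epsilon_{pq_2})=1$, whence $a,b\in\ZZ$ with $a^2-pq_2b^2=1$, i.e. $(a-1)(a+1)=pq_2b^2$. This is exactly the setup already used in the proof of Proposition \ref{31}.

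Next I would invoke case (a) of that same computation: under the hypothesis that $a\pm1$ is a square in $\NN$, a parity check shows $a$ is even (so $\gcd(a-1,a+1)=1$), and comparing the $p$-adic and $q_2$-adic valuations on the two sides of $(a-1)(a+1)=pq_2b^2$ forces the clean splitting $a\pm1=b_1^2$ and $a\mp1=pq_2b_2^2$ for suitable $b_1,b_2\in\ZZ$. This is precisely the alternative singled out by Lemma \ref{1:046}, which rules out the degenerate shapes $2(a\pm1)$ and $2pq_2(a\pm1)$.

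Subtracting the two equalities gives $b_1^2-pq_2b_2^2=\pm2$. Reducing modulo $p$ yields $b_1^2\equiv\pm2\pmod p$ with $\pm2\not\equiv0$, so $\left(\frac{\pm2}{p}\right)=1$. Since $p\equiv1\pmod4$ we have $\left(\frac{-1}{p}\right)=1$; hence in both sign cases $\left(\frac{2}{p}\right)=1$, which is equivalent to $p\equiv\pm1\pmod8$. Combined with $p\equiv1\pmod4$ this forces $p\equiv1\pmod8$, as claimed.

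The only delicate point is the factorization step, namely justifying that $a$ is even and that the product splits exactly as $b_1^2$ times $pq_2b_2^2$ rather than in some other arrangement. I expect this to be the main (though still routine) obstacle; it can be handled either by the explicit $2$-adic valuation argument indicated above or simply by quoting the corresponding lines in the proof of Proposition \ref{31}, after which the quadratic-residue conclusion is immediate.
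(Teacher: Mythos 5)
Your proposal is correct and follows essentially the same route as the paper: both rest on the splitting $a\pm1=b_1^2$, $a\mp1=pq_2b_2^2$ coming from $N(\epsilon_{pq_2})=1$, and both then deduce $\left(\tfrac{2}{p}\right)=1$ by reducing modulo $p$ (your subtraction $b_1^2-pq_2b_2^2=\pm2$ is the same computation as the paper's congruence $a\pm1=(a\mp1)\pm2\equiv\pm2\pmod p$), using $p\equiv1\pmod4$ to absorb the sign. The only difference is that you spell out the valuation/parity justification of the splitting, which the paper leaves implicit by citing the computation in Proposition \ref{31}.
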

\begin{proof}
If $a\pm1$ is a square in $\NN$, then
 $ \left\{\begin{array}{ll}
a\pm 1 &=y_1^2,\\
a\mp 1 &= pq_2y_2^2.
\end{array}\right.$ \\
Hence  $1=\left(\frac{a\pm1}{p}\right)=\left(\frac{a\mp1\pm2}{p}\right)=\left(\frac{2}{p}\right)$.
\end{proof}
\noindent Therefore, if we suppose that $a\pm1$ is a square in $\NN$, then from Proposition \ref{248} we get:

\begin{enumerate}[\rm i.]
  \item If $2p(x\pm1)$ is a square in  $\NN$, then $\mathrm{Am}_s(\kk/\QQ(i))=\langle[\h], [\mathcal{Q}_1]\rangle.$
  \item Else, $\mathrm{Am}_s(\kk/\QQ(i))=\langle[\h], [\hh]\rangle.$
   \item  there  exists an unambiguous ideal
   $\mathcal{I}$ in $\kk/\QQ(i)$ of order  $2$ such that
   $$\mathrm{Am}(\kk/\QQ(i))=
    \left\{\begin{array}{ll}
     \langle[\h], [\mathcal{Q}_1], [\mathcal{I}]\rangle,& \text{ if }2p(x\pm1)\text{ is a square in }\NN,\\
     \langle[\h], [\hh], [\mathcal{I}]\rangle, & \text{ otherwise}.
\end{array}\right.$$
\end{enumerate}

The ideal $\mathcal{I}$ can be constructed by using the result:
     \begin{lem}[\cite{Si-95}]\label{3:115}
     Let $p_1$, $p_2$,...,$p_n$ be distinct primes and for each $j$,  let $e_j=\pm1$.
  Then there exist infinitely many primes $\ell$ such that $\left(\frac{p_j}{\ell}\right)=e_j$, for all $j$.
    \end{lem}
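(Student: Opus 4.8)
The plan is to reduce the statement to Dirichlet's theorem on primes in arithmetic progressions, using quadratic reciprocity together with the Chinese Remainder Theorem. The key observation is that for a prime $\ell$ coprime to $2p_1\cdots p_n$, each symbol $\left(\frac{p_j}{\ell}\right)$ depends only on the residue of $\ell$ modulo $8\prod_j p_j$. Hence it suffices to exhibit a single residue class, coprime to the modulus, in which all the prescribed values $e_j$ are simultaneously realised, and then to invoke Dirichlet to produce infinitely many primes in that class.

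First I would separate the prime $2$ (if it occurs among the $p_j$) from the odd primes. For an odd prime $p_j$, quadratic reciprocity gives $\left(\frac{p_j}{\ell}\right)=(-1)^{\frac{p_j-1}{2}\cdot\frac{\ell-1}{2}}\left(\frac{\ell}{p_j}\right)$, so that $\left(\frac{p_j}{\ell}\right)$ equals $\left(\frac{\ell}{p_j}\right)$ multiplied by a sign that depends only on $\ell\bmod 4$ (and is constantly $+1$ when $p_j\equiv1\pmod 4$). For $p_j=2$ one has $\left(\frac{2}{\ell}\right)=(-1)^{\frac{\ell^2-1}{8}}$, which depends only on $\ell\bmod 8$. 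Thus all the data are governed by $\ell$ modulo $M:=8\prod_{p_j\ \mathrm{odd}}p_j$.

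Next I would select the residue class in two stages. I would first fix $\ell\bmod 8$: choose $r\in\{1,3,5,7\}$ so that $(-1)^{\frac{r^2-1}{8}}=e_{j_0}$ in case $2=p_{j_0}$ is one of the given primes (and an arbitrary such $r$ otherwise). This also fixes $\ell\bmod 4$, hence pins down every reciprocity sign $(-1)^{\frac{p_j-1}{2}\cdot\frac{\ell-1}{2}}$. Then, for each odd $p_j$, since exactly half the nonzero residues modulo $p_j$ are quadratic residues, I can pick $a_j\in(\ZZ/p_j\ZZ)^\times$ with $\left(\frac{a_j}{p_j}\right)$ equal to the value that forces $\left(\frac{p_j}{\ell}\right)=e_j$ once the fixed sign is incorporated. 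By the Chinese Remainder Theorem the conditions $\ell\equiv r\pmod 8$ and $\ell\equiv a_j\pmod{p_j}$ assemble into one congruence $\ell\equiv c\pmod M$ with $\gcd(c,M)=1$; every prime $\ell\equiv c\pmod M$ is then automatically coprime to $2p_1\cdots p_n$, so the reciprocity computations apply to it.

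Finally I would apply Dirichlet's theorem: since $\gcd(c,M)=1$ there are infinitely many primes $\ell\equiv c\pmod M$, and each of them satisfies $\left(\frac{p_j}{\ell}\right)=e_j$ for all $j$. The heart of the argument—and the one point where I would write the details carefully—is the independence of the choices: fixing $\ell\bmod 8$ consumes the control over $\left(\frac{2}{\ell}\right)$ and over all the reciprocity signs, yet each symbol $\left(\frac{\ell}{p_j}\right)$ at an odd $p_j$ remains freely adjustable through $\ell\bmod p_j$. Consequently the $n$ prescribed signs can be met simultaneously with no compatibility constraint, which is precisely why no obstruction arises.
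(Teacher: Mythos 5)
The paper does not actually prove this lemma: it is quoted directly from Sime \cite{Si-95}, so there is no internal argument to compare yours against. Your proposal is a correct, self-contained proof, and it is the standard one: quadratic reciprocity and the supplementary law for $\left(\tfrac{2}{\ell}\right)$ reduce each condition $\left(\tfrac{p_j}{\ell}\right)=e_j$ to a congruence condition on $\ell$ modulo $M=8\prod_{p_j\ \mathrm{odd}}p_j$; the Chinese Remainder Theorem assembles one admissible choice per modulus into a single residue class $c$ modulo $M$; and Dirichlet's theorem on primes in arithmetic progressions produces infinitely many primes in that class. The one point that genuinely needs care is the one you isolate: fixing $\ell\bmod 8$ spends your control over $\left(\tfrac{2}{\ell}\right)$ and over every reciprocity sign $(-1)^{\frac{p_j-1}{2}\cdot\frac{\ell-1}{2}}$, but each symbol $\left(\tfrac{\ell}{p_j}\right)$ remains independently adjustable through $\ell\bmod p_j$ because the moduli $8,p_1,\dots,p_n$ are pairwise coprime and both residues and nonresidues exist modulo each odd $p_j$. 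Your observation that $\gcd(c,M)=1$ is automatic (the class modulo $8$ is odd and each $a_j$ is a unit modulo $p_j$), so that Dirichlet applies and every prime in the class is coprime to $2p_1\cdots p_n$, closes the argument; I see no gap.
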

 Let  $\ell$ be a prime congruent to 1 $\pmod4$ and satisfying  $\left(\frac{pq_1q_2}{\ell}\right)=-\left(\frac{q_1}{\ell}\right)=1$, thus $\ell$ splits completely in  $\kk$. Therefore  $\mathcal{I}$ is one of the ideals of $\kk$  above  $\ell$; since   $\left(\frac{q_1}{\ell}\right)=-1$, so $\mathcal{I}$ remaind  inert in $\KK_2$. We proceed as in \cite{AZT14-3} to prove  that $\mathcal{I}$, $\h\mathcal{I}$,  $\hh\mathcal{I}$ and $\h\hh\mathcal{I}$ or $\mathcal{I}$, $\h\mathcal{I}$,  $\Q_1\mathcal{I}$ and $\Q_1\h\mathcal{I}$ are not principal in $\kk$.
\begin{The}\label{235}
Keep the previous hypothesis and notations and put  $\epsilon_{pq_2}=a+b\sqrt{pq_2}$, $\epsilon_{pq_1q_2}=x+y\sqrt{pq_1q_2}$.
\begin{enumerate}[\rm1.]
\item If   $a\pm1$ is a square in $\NN$ and   $2q_1(x+1)$, $2q_1(x-1)$ are not, then $\kappa_{\KK_2}=\langle[\Q_1], [\mathcal{I}]\rangle$ or $\langle[\Q_1], [\h\mathcal{I}]\rangle$.
\item If  $a\pm1$ and   $2q_1(x\pm1)$  are squares  in $\NN$, then $\kappa_{\KK_2}=\langle[\mathcal{I}]\rangle$ or $\langle[\mathcal{I}\h]\rangle$ or $\langle[\mathcal{I}\hh]\rangle$ or $\langle[\mathcal{I}\h\hh]\rangle$.
\item If $a+1$ and $a-1$ are not squares in $\NN$  and   $2q_1(x\pm1)$  is, then $\kappa_{\KK_2}=\langle[\Q_2]\rangle=\langle[\h\hh]\rangle$.
\item If  $a+1$, $a-1$,   $2q_1(x+1)$ and $2q_1(x-1)$ are not squares in $\NN$, then $\kappa_{\KK_2}=\langle[\Q_1]\rangle$.
\end{enumerate}
\end{The}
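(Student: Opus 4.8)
The plan is to decide capitulation through the elementary criterion that an ideal $\mathfrak a$ of $\kk$ with $\mathfrak a^2=(\alpha)$ capitulates in $\KK_2$ precisely when $\alpha v$ is a square in $\KK_2$ for some unit $v\in E_{\KK_2}$: indeed, if $\mathfrak a\OO_{\KK_2}=(\beta)$ then $(\alpha)=(\beta)^2$, forcing $\alpha v=\beta^2$ for a unit $v$, and conversely $\alpha v=\gamma^2$ yields $\mathfrak a\OO_{\KK_2}=(\gamma)$ since the ideal group is torsion-free. Because Theorem~\ref{226} already fixes $|\kappa_{\KK_2}|$ (namely $4$ in item~1 and $2$ in items~2--4), it will be enough in each case to produce capitulating classes generating a group of the right order and then to recognise them through the class relations of Lemma~\ref{7} and Proposition~\ref{248}.

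First I would dispose of the classes $[\Q_1]$ and $[\Q_2]$ by feeding the explicit squares of Proposition~\ref{31} into the criterion. Since $\sqrt{q_1}\in\KK_2$ we have $q_1=(\sqrt{q_1})^2$, so from $\Q_1^2=(q_1)$ the ideal $\Q_1$ always capitulates, $\Q_1\OO_{\KK_2}=(\sqrt{q_1})$; by Lemma~\ref{7} its class is nontrivial exactly when $2q_1(x\pm1)$ is not a square. For $\Q_2$, combining the permanent square $2\epsilon_{q_1}$ (Lemma~\ref{1:048}) with the squares $q_2\epsilon_{pq_2}$, $2q_2\epsilon_{pq_2}$, or $q_1q_2\epsilon_{pq_1q_2}$ coming from the proof of Proposition~\ref{31} (and using again $q_1=(\sqrt{q_1})^2$) exhibits a unit $v$ with $q_2v$ a square in $\KK_2$ in every situation except case~2 ($a\pm1$ and $2q_1(x\pm1)$ both squares), where no available square carries a factor $q_2$; thus $\Q_2$ capitulates off of case~2. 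This settles items~3 and~4: in item~3 one has $\Q_1$ principal and $[\Q_2]=[\h\hh]\neq1$ capitulating, whence $\kappa_{\KK_2}=\langle[\Q_2]\rangle=\langle[\h\hh]\rangle$; in item~4 the nontrivial capitulating class is $[\Q_1]$, while $\Q_2$ contributes nothing new because either $[\Q_1]=[\Q_2]$ (when $2p(x\pm1)$ is the square, as one sees from $q_1q_2\epsilon_{pq_1q_2}=\bigl(y_1\sqrt{pq_1q_2}+y_2q_1q_2\bigr)^2$) or $\Q_2$ is itself principal (when $2q_2(x\pm1)$ is the square), so $\kappa_{\KK_2}=\langle[\Q_1]\rangle$.

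For items~1 and~2 the hypothesis that $a\pm1$ is a square forces $p\equiv1\pmod8$ (the Lemma preceding the statement), so the unramified prime $\mathcal I$, inert in $\KK_2$ and of order $2$, is available. In item~1, $|\kappa_{\KK_2}|=4$ while the preceding paragraph shows that among the $\Q$'s only the single nontrivial class $[\Q_1]$ is contributed; hence a second, independent capitulating class must exist, and since $[\mathcal I]$ and $[\h\mathcal I]$ are not strongly ambiguous—hence lie outside $\langle[\Q_1]\rangle\subseteq\mathrm{Am}_s(\kk/\QQ(i))$—while differing by $[\h]$, the kernel is $\langle[\Q_1],[\mathcal I]\rangle$ or $\langle[\Q_1],[\h\mathcal I]\rangle$. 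In item~2, $\Q_1$ is principal and $\Q_2$ fails to capitulate, so the unique nontrivial class of the order-$2$ kernel is again one of the twists $[\mathcal I],[\mathcal I\h],[\mathcal I\hh],[\mathcal I\h\hh]$.

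The hard part is exactly this last point: confirming that some $\mathcal I$-twist genuinely lies in $\kappa_{\KK_2}$ and pinning it down only up to the stated ambiguity. Because $\mathcal I$ is inert in $\KK_2/\kk$, its capitulation cannot be read off from a square-root generator as for $\Q_1,\Q_2$, and must instead be extracted from the non-principality analysis of $\mathcal I,\h\mathcal I,\hh\mathcal I,\h\hh\mathcal I$ set up before the statement together with a norm-residue computation; I would carry this through exactly as in \cite{AZT14-3}, using the defining condition $\left(\frac{q_1}{\ell}\right)=-1$ to control which twist has trivial image in $\mathbf{C}l(\KK_2)$, the residual indeterminacy in whether $[\h]$ itself capitulates being precisely what produces the ``or'' in the statement. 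Finally I would match the order of the group so generated against Theorem~\ref{226} to conclude equality with $\kappa_{\KK_2}$.
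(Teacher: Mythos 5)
Your general criterion (an ideal $\mathfrak a$ with $\mathfrak a^2=(\alpha)$ capitulates in $\KK_2$ iff $\alpha v$ is a square in $\KK_2$ for some unit $v$) is correct and is indeed the paper's working tool, and your items 3 and 4 are essentially the paper's proof: $\Q_1$ capitulates because $\sqrt{q_1}\in\KK_2$, $\Q_2$ capitulates because $q_2\epsilon_{pq_2}$ or $2q_2\epsilon_{pq_2}$ (combined with the square $2\epsilon_{q_1}$) is a square in $\KK_2$, and the count $|\kappa_{\KK_2}|=2$ from Theorem \ref{226} together with the principality relations from \cite{AZT12-2} forces the stated equalities. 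Note also that the containment you and the paper both use tacitly, $\kappa_{\KK_2}\subseteq\mathrm{Am}(\kk/\QQ(i))$, does hold: $\kappa_{\KK_2}$ is $2$-torsion, and since $\QQ(i)$ has class number one every class $c$ of $\kk$ satisfies $c^{1+\sigma}=1$, so $2$-torsion classes are ambiguous.

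The genuine gap is in items 1 and 2. There the theorem implicitly asserts a \emph{negative} statement: $[\h]$ does not capitulate in $\KK_2$ (item 1), and none of $[\h]$, $[\hh]$, $[\h\hh]=[\Q_2]$ capitulates (item 2); otherwise $\langle[\Q_1],[\h]\rangle$, respectively $\langle[\h]\rangle$ or $\langle[\hh]\rangle$, would be admissible values of $\kappa_{\KK_2}$, which the statement excludes. Your proposal never proves this. In item 1 you pass from ``a second independent capitulating class must exist'' directly to ``it is $[\mathcal I]$ or $[\h\mathcal I]$'', a non sequitur, since nothing you have said prevents that second class from being $[\h]$ itself; in item 2 you infer that $\Q_2$ ``fails to capitulate'' merely because Proposition \ref{31} supplies no square carrying a factor $q_2$, but failure to exhibit a generator is not a proof of non-principality of $\Q_2\OO_{\KK_2}$ (by your own criterion you would need that $q_2v$ is a square for \emph{no} unit $v\in E_{\KK_2}$). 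This negative step is exactly the core of the paper's proof, and it is not what \cite{AZT14-3} is invoked for (that reference only covers non-principality \emph{in $\kk$} of the $\mathcal I$-twists): assuming $\h=(\alpha)$ in $\KK_2$ gives $\pi_1\epsilon=\alpha^2$ for a unit $\epsilon\in E_{\KK_2}$; one checks $\epsilon$ is neither real nor purely imaginary (else $\sqrt p\in\KK_2^+$), then applies $N_{\KK_2/\kk}$ and uses the explicit fundamental system of units of Proposition \ref{31} to rule out every value $N_{\KK_2/\kk}(\epsilon)\in\{\pm1,\pm i,\pm\epsilon_{pq_1q_2},\pm i\epsilon_{pq_1q_2}\}$ (each case yields an absurdity such as $\sqrt i\in\kk$ or $\sqrt{\epsilon_{pq_1q_2}}\in\kk$, or forces $\epsilon$ real or purely imaginary). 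Relatedly, your closing claim that the ``or'' in item 1 reflects ``residual indeterminacy in whether $[\h]$ itself capitulates'' is wrong and symptomatic of this gap: the paper proves $[\h]$ never capitulates; the ``or'' records only that one does not know which of $[\mathcal I]$, $[\h\mathcal I]$ lies in the kernel, and the paper's numerical tables show both cases actually occur.
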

\begin{proof}
Let $\h$, $\hh$, $\Q_1$ and $\Q_2$  denote always the ideals of  $\kk$ above $\pi_1=e+2if$, $\pi_2=e-2if$, $q_1$  and $q_2$ respectively.\\
1.  Suppose  $a\pm1$ is a square in $\NN$ and  $2q_1(x+1)$, $2q_1(x-1)$ are not. We know  according to Proposition \ref{31} that  $E_{\KK_2}=\langle i, \sqrt{\epsilon_{q_1}\epsilon_{pq_2}},  \epsilon_{pq_1q_2}, \sqrt{i\epsilon_{q_1}}\rangle$ and that four  classes capitulate in $\KK_2$ one of them is  $\Q_1$. To proof the result, it suffices to  prove that  $\h$ does not capitulate in $\KK_2$.

If $\h$ capitulates in  $\KK_2$, then there exists $\alpha\in\KK_2$ such that $\h=(\alpha)$; hence   $(\alpha^2)= (\pi_1)$. As a result, there exists a unit   $\epsilon\in\KK_2$ such that $\pi_1\epsilon=\alpha^2$. The unit  $\epsilon$ can not be real or purely imaginary. In fact, if it is real   (same  proof if it is purely imaginary), then by putting  $\alpha=\alpha_1+i\alpha_2$, where $\alpha_i$ are in $\KK_2^+$, we get $\alpha_1^2-\alpha_2^2+2\alpha_1\alpha_2=\epsilon(e+2if)$, thus
$$\left\{
 \begin{array}{ll}
 \alpha_1^2-\alpha_2^2&=e\epsilon,\\
 \alpha_1\alpha_2&=f\epsilon,
 \end{array}\right.$$
  hence $f\alpha_1^2-e\alpha_2\alpha_1-f\alpha_2^2=0$.  But this implies that  $\alpha_1=\frac{\alpha_2(e\pm\sqrt p)}{f}$, and thus  $\sqrt p\in\KK_2^+$, which is absurd.

As $\pi_1\epsilon=\alpha^2$, so, by the norm $N_{\KK_2/\kk}$,  we get $\pi_1^2N_{\KK_2/\kk}(\epsilon)=N_{\KK_2/\kk}(\alpha)^2$ with $N_{\KK_2/\kk}(\epsilon)\in E_{\kk}=\langle i, \epsilon_{pq_1q_2}\rangle$. Therefore, we have the following result  $$N_{\KK_2/\kk}(\epsilon)\in\{\pm1, \pm i, \pm \epsilon_{pq_1q_2}, \pm i\epsilon_{pq_1q_2}\}.$$
\begin{enumerate}[\rm a.]
  \item If $N_{\KK_2/\kk}(\epsilon)=\pm i$, then $\pi_1^2(\pm i)=N_{\KK_2/\kk}(\alpha)^2$; hence $\sqrt i\in\kk$, which is absurd.
  \item If  $N_{\KK_2/\kk}(\epsilon)=\pm\epsilon_{pq_1q_2}$, then $\pi_1^2(\pm\epsilon_{pq_1q_2})=N_{\KK_2/\kk}(\alpha)^2$; this in turn yields that  $\sqrt{\epsilon_{pq_1q_2}}\in\kk$, which is absurd.
  \item    If $N_{\KK_2/\kk}(\epsilon)=\pm i\epsilon_{pq_1q_2}$, then $\pi_1^2(\pm i\epsilon_{pq_1q_2})=N_{\KK_2/\kk}(\alpha)^2$; this in turn yields that   $\sqrt{i\epsilon_{pq_1q_2}}\in\kk$, which is absurd.
    \item If  $N_{\KK_2/\kk}(\epsilon)=1$, then there exist $a$, $b$, $c$ and $d$ in $\{0, 1\}$ such that $\epsilon=i^a\sqrt{\epsilon_{q_1}\epsilon_{pq_2}}^b\epsilon_{pq_1q_2}^c \sqrt{i\epsilon_{q_1}}^d$ and $N_{\KK_2/\kk}(\epsilon)=1$, hence $(-1)^a\epsilon_{pq_1q_2}^{2c}i^d=1$. Thus  obviously we must have $a=c=d=0$. As a result, we get  $\epsilon=\sqrt{\epsilon_{q_1}\epsilon_{pq_2}}^b$ is a real, which is absurd.
      \item If $N_{\KK_2/\kk}(\epsilon)=-1$, then, by applying the same  argument, we get  $\epsilon=i\sqrt{\epsilon_{q_1}\epsilon_{pq_2}}^b$, which is purely imaginary, and this is absurd.
\end{enumerate}
To complete the proof of the first point of the corollary, we give examples that affirm the two cases of capitulation:
\begin{exams}~\\
$a\pm1$ is a square in $\NN$ and   $2q_1(x+1)$, $2q_1(x-1)$ are not.
\begin{longtable}{| c | c | c | c | c |  }
\hline
 $d$  $=p.q_1.q_2$
             & $\mathcal{I}$ in $\kk$ & $\h$ & $\mathcal{I}$ in $\KK_2$  & $\h\mathcal{I}$ in $\KK_2$\\
\hline
\endfirsthead
\hline
 $d$  $=p.q_1.q_2$
             & $\mathcal{I}$ in $\kk$ & $\h$ & $\mathcal{I}$ in $\KK_2$  & $\h\mathcal{I}$ in $\KK_2$\\
\hline
\endhead
$4029=17.3.79$ & $[5, 1, 1]~$ & $[170, 0]~$ & $[170, 0]~$ & $[0, 0]~$ \\ \hline
$4029=17.79.3$ & $[5, 0, 0]~$ & $[30, 0]~$ & $[30, 0]~$ & $[0, 0]~$ \\ \hline
$4029=17.79.3$ & $[0, 0, 1]~$ & $[30, 0]~$ & $[30, 0]~$ & $[0, 0]~$ \\ \hline
$4029=17.79.3$ & $[5, 0, 0]~$ & $[30, 0]~$ & $[30, 0]~$ & $[0, 0]~$ \\ \hline
$4029=17.79.3$ & $[5, 0, 0]~$ & $[30, 0]~$ & $[30, 0]~$ & $[0, 0]~$ \\ \hline
$4029=17.79.3$ & $[0, 1, 1]~$ & $[30, 0]~$ & $[30, 0]~$ & $[0, 0]~$ \\ \hline
$4029=17.79.3$ & $[0, 0, 1]~$ & $[30, 0]~$ & $[30, 0]~$ & $[0, 0]~$ \\ \hline
$4029=17.79.3$ & $[5, 1, 0]~$ & $[30, 0]~$ & $[30, 0]~$ & $[0, 0]~$ \\ \hline
\end{longtable}
\end{exams}
2. Suppose  $a\pm1$ and  $2q_1(x\pm1)$ are squares in $\NN$; then  according to  Proposition \ref{31}, $2p(x+1)$, $2p(x-1)$,  $2q_2(x+1)$ and $2q_2(x-1)$ are not squares in $\NN$; but  $2pq_2(x\pm1)$ is. Therefore   \cite[Proposition 2]{AZT12-2}  implies that   $\h\hh$ and  $\Q_2$ are  not principal in  $\kk$, but $\Q_1$ and $\h\hh\Q_2$ are; hence $[\Q_2]=[\h\hh]$. Which implies that   $\mathrm{Am}(\kk/\QQ(i))=\langle[\h], [\hh], [\mathcal{I}]\rangle$. By using the same method applied in the above point, we show that $\h$, $\hh$ and  $[\Q_2]=[\h\hh]$ do not capitulate in $\KK_2$. Thus $\kappa_{\KK_2}$ consists of one of the following ideal classes: $\mathcal{I}$, $\h\mathcal{I}$, $\hh\mathcal{I}$ and $\h\hh\mathcal{I}$. The following examples highlight these statements:
\begin{exams}~\\
$a\pm1$ and  $2q_1(x\pm1)$ are squares in $\NN$.
\begin{longtable}{| c | c | c | c | c |}
\hline
 $d$  $=p.q_1.q_2$  & $\mathcal{I}$ in $\KK_2$ & $\mathcal{I}\h$ in $\KK_2$ & $\mathcal{I}\hh$ in $\KK_2$ & $\h\hh\mathcal{I}$ in $\KK_2$\\
\hline
\endfirsthead
\hline
 $d$  $=p.q_1.q_2$  & $\mathcal{I}$ in $\KK_2$ & $\mathcal{I}\h$ in $\KK_2$ & $\mathcal{I}\hh$ in $\KK_2$ & $\h\hh\mathcal{I}$ in $\KK_2$\\
\hline
\endhead
$969=17.19.3$ & $[0, 0, 0, 0]~$ & $[3, 1, 1, 1]~$ & $[0, 1, 1, 0]~$ & $[0, 1, 0, 0]~$ \\ \hline
$1533=73.3.7$ & $[0, 0, 1, 0]~$ & $[0, 0, 0, 0]~$ & $[21, 1, 0, 1]~$ & $[21, 1, 0, 0]~$ \\ \hline
$2037 = 97.3.7$ & $[9, 0, 0, 0]~$ & $[9, 0, 0, 1]~$ & $[0, 0, 0, 0]~$ & $[0, 0, 0, 1]~$ \\ \hline
$2193 = 17.43.3$ & $[3, 0, 0, 0]~$ & $[0, 0, 1, 0]~$ & $[0, 1, 1, 0]~$ & $[0, 0, 0, 0]~$ \\ \hline
\end{longtable}
\end{exams}
3.  Suppose  $a+1$ and $a-1$ are not squares in $\NN$, and assume  $2q_1(x\pm1)$ is. Then  Propositions 1 and 2 of \cite{AZT12-2}  imply that $\Q_1$ is principal in $\kk$,  $\Q_2$ and $\h\hh$ are not, and   $[\Q_2]=[\h\hh]$. Moreover, $p(a\pm1)$ or $2p(a\pm1)$ is a square in $\NN$, hence $q_2\epsilon_{pq_2}$ or $2q_2\epsilon_{pq_2}$ is a square in $\KK_2$; and this yields that $\Q_2$ and  $\h\hh$ capitulate  in $\KK_2$. Here are some examples that illustrate our results.
\begin{exams}~\\
$a+1$ and $a-1$ are not squares in $\NN$ and  $2q_1(x\pm1)$ is.
\footnotesize
\begin{longtable}{| c | c | c | c | c | c | c | c |}
\hline
 $d = p.q_1.q_2$  & $a$ & $2q_1(x+1)$ & $2q_1(x-1)$ & $\h\hh$   & $\Q_2$  & $\h\hh$ & $\Q_2$ \\
                 &    &                 &             &in $\kk$ & in $\kk$ & &\\
\hline
\endfirsthead
\hline
 $d = p.q_1.q_2$  & $a$ & $2q_1(x+1)$ & $2q_1(x-1)$ & $\h\hh$   & $\Q_2$  & $\h\hh$ & $\Q_2$ \\
                 &    &                 &             &in $\kk$ & in $\kk$ & &\\
\hline
\endhead
$165=5.11.3$ & $4$ & $165$ & $121=11^2$ & $[2, 0]~$ & $[2, 0]~$ & $[0, 0, 0]~$ & $[0, 0, 0]~$ \\ \hline
$273=13.3.7$ & $1574$ & $4368$ & $4356=66^2$ & $[2, 0]~$ & $[2, 0]~$ & $[0, 0, 0]~$ & $[0, 0, 0]~$ \\ \hline
$285=5.19.3$ & $4$ & $361=19^2$ & $285$ & $[4, 0]~$ & $[4, 0]~$ & $[0, 0, 0]~$ & $[0, 0, 0]~$ \\ \hline
$385=5.11.7$ & $6$ & $2108304=1452^2$ & $2108260$ & $[2, 0]~$ & $[2, 0]~$ & $[0, 0, 0]~$ & $[0, 0, 0]~$ \\ \hline
$429=13.3.11$ & $12$ & $441=21^2$ & $429$ & $[4, 0]~$ & $[4, 0]~$ & $[0, 0, 0]~$ & $[0, 0, 0]~$ \\ \hline
$465=5.31.3$ & $4$ & $984064=992^2$ & $983940$ & $[4, 0]~$ & $[4, 0]~$ & $[0, 0, 0]~$ & $[0, 0, 0]~$ \\ \hline
$609=29.7.3$ & $28$ & $8479744=2912^2$ & $8479716$ & $[4, 0]~$ & $[4, 0]~$ & $[0, 0, 0]~$ & $[0, 0, 0]~$ \\ \hline
$665=5.19.7$ & $6$ & $521360$ & $521284=722^2$ & $[6, 0]~$ & $[6, 0]~$ & $[0, 0, 0]~$ & $[0, 0, 0]~$ \\ \hline
$741=13.3.19$ & $85292$ & $741$ & $729=27^2$ & $[6, 0]~$ & $[6, 0]~$ & $[0, 0, 0]~$ & $[0, 0, 0]~$ \\ \hline
$777=37.7.3$ & $295$ & $3136=56^2$ & $3108$ & $[4, 0]~$ & $[4, 0]~$ & $[0, 0, 0]~$ & $[0, 0, 0]~$ \\ \hline
$885=5.59.3$ & $4$ & $14160$ & $13924=118^2$ & $[6, 0]~$ & $[6, 0]~$ & $[0, 0, 0]~$ & $[0, 0, 0]~$ \\ \hline
$897=13.3.23$ & $415$ & $3600=60^2$ & $3588$ & $[4, 0]~$ & $[4, 0]~$ & $[0, 0, 0]~$ & $[0, 0, 0]~$ \\ \hline
$1045=5.11.19$ & $39$ & $1089=33^2$ & $1045$ & $[4, 0]~$ & $[4, 0]~$ & $[0, 0, 0]~$ & $[0, 0, 0]~$ \\ \hline
\end{longtable}
\normalsize
\end{exams}
4. If  $a+1$,  $a-1$,   $2q_1(x+1)$ and $2q_1(x-1)$ are not squares in $\NN$, then $\Q_1$ is not  principal in $\kk$; and as $\sqrt{q_1}\in \KK_2$, so $\Q_1$ capitulate in $\KK_2$.
\begin{exams}~\\
$a+1$,  $a-1$,   $2q_1(x+1)$ and  $2q_1(x-1)$ are not squares in $\NN$.
\begin{longtable}{| c | c | c | c | c | c | c |}
\hline
 $d = p.q_1.q_2$  & $a+1$ & $a-1$ & $2q_1(x+1)$ & $2q_1(x-1)$  & $\Q_1$ in $\kk$ & $\Q_1$\\
\hline
\endfirsthead
\hline
 $d = p.q_1.q_2$  & $a+1$ & $a-1$ & $2q_1(x+1)$ & $2q_1(x-1)$  & $\Q_1$ in $\kk$ & $\Q_1$\\
\hline
\endhead
$105=5.7.3$ & $5$ & $3$ & $588$ & $560$ & $[2, 0]~$ & $[0, 0]~$ \\ \hline
$165=5.3.11$ & $90$ & $88$ & $45$ & $33$ & $[2, 0]~$ & $[0, 0]~$ \\ \hline
$273=13.7.3$ & $26$ & $24$ & $10192$ & $10164$ & $[2, 0]~$ & $[0, 0]~$ \\ \hline
$285=5.3.19$ & $40$ & $38$ & $57$ & $45$ & $[4, 0]~$ & $[0, 0]~$ \\ \hline
$345=5.3.23$ & $1127$ & $1125$ & $40572$ & $40560$ & $[2, 0]~$ & $[0, 0]~$ \\ \hline
$345=5.23.3$ & $5$ & $3$ & $311052$ & $310960$ & $[2, 0]~$ & $[0, 0]~$ \\ \hline
$385=5.7.11$ & $90$ & $88$ & $1341648$ & $1341620$ & $[2, 0]~$ & $[0, 0]~$ \\ \hline
$429=13.11.3$ & $26$ & $24$ & $1617$ & $1573$ & $[4, 0]~$ & $[0, 0]~$ \\ \hline
$465=5.3.31$ & $250$ & $248$ & $95232$ & $95220$ & $[4, 0]~$ & $[0, 0]~$ \\ \hline
\end{longtable}
\end{exams}
\end{proof}
\subsection{Capitulation in $\KK_3$}~\\
Let  $p$,  $q_1$ and $q_2$ be different primes satisfying  $p\equiv-q_1\equiv-q_2\equiv1 \pmod 4$. Put $\kk=\QQ(\sqrt{pq_1q_2}, i)$,  $\KK_3=\QQ(\sqrt{q_2},\sqrt{pq_1}, i)$ and $\epsilon_{pq_1}=a+b\sqrt{pq_1}$. As  $q_1$ and $q_2$ play symmetric roles, so the following results are deduced from the above  by analogy. Let $\mathcal{I}$ be the ideal defined as above and assume   the prime $\ell$  satisfies the conditions: $\ell\equiv1 \pmod4$ and  $\left(\frac{pq_1q_2}{\ell}\right)=-\left(\frac{q_2}{\ell}\right)=1$.
\begin{The}\label{237}
Keep the obvious notations and hypothesis. Put $\epsilon_{pq_1}=a+b\sqrt{pq_1}$, then
\begin{enumerate}[\rm1.]
\item If  $a\pm1$ is a square in $\NN$ and  $2q_2(x+1)$, $2q_2(x-1)$ are not, then $\kappa_{\KK_3}=\langle[\Q_2], [\mathcal{I}]\rangle$ or $\langle[\Q_2], [\h\mathcal{I}]\rangle$.
\item If $a\pm1$ and   $2q_2(x\pm1)$  are squares in $\NN$, then $\kappa_{\KK_3}=\langle[\mathcal{I}]\rangle$ or $\langle[\mathcal{I}\h]\rangle$ or $\langle[\mathcal{I}\hh]\rangle$ or $\langle[\mathcal{I}\h\hh]\rangle$.
\item If $a+1$ and $a-1$ are not squares  in $\NN$ and   $2q_2(x\pm1)$  is, then  $\kappa_{\KK_3}=\langle[\Q_1]\rangle$.
\item If  $a+1$,  $a-1$,  $2q_2(x+1)$ and $2q_2(x-1)$  are not squares in $\NN$, then  $\kappa_{\KK_3}=\langle[\Q_2]\rangle$.
\end{enumerate}
\end{The}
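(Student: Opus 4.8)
The plan is to deduce Theorem~\ref{237} from Theorem~\ref{235} by exploiting the fact that, in every hypothesis and construction above, the two primes $q_1$ and $q_2$ enter symmetrically. Indeed, the standing conditions ``$p\equiv -q_1\equiv -q_2\equiv 1\pmod 4$ are distinct primes'' are invariant under the transposition $q_1\leftrightarrow q_2$, and since $pq_1q_2=pq_2q_1$ this transposition fixes $\kk=\QQ(\sqrt{pq_1q_2},i)$ together with $p$, hence the ideals $\h,\hh$ above $\pi_1,\pi_2$ and the unit $\epsilon_{pq_1q_2}=x+y\sqrt{pq_1q_2}$, while interchanging $\KK_2=\kk(\sqrt{q_1})$ with $\KK_3=\kk(\sqrt{q_2})$, the primes $\Q_1$ and $\Q_2$, and the units $\epsilon_{pq_2}$ and $\epsilon_{pq_1}$ (and $\epsilon_{q_1}$ with $\epsilon_{q_2}$). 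First I would verify that each auxiliary statement feeding the proof of Theorem~\ref{235} has a counterpart under this relabeling: Proposition~\ref{32} is the image of Proposition~\ref{31}, Theorem~\ref{226}(3) is the image of Theorem~\ref{226}(2), and the description of $\mathrm{Am}(\kk/\QQ(i))$ in Proposition~\ref{248} is itself symmetric in $q_1,q_2$. I would use the auxiliary ideal $\mathcal{I}$ fixed just before the theorem, namely a prime of $\kk$ above $\ell\equiv1\pmod4$ with $\left(\frac{pq_1q_2}{\ell}\right)=-\left(\frac{q_2}{\ell}\right)=1$, so that $\ell$ splits completely in $\kk$ while $\mathcal{I}$ remains inert in $\KK_3=\kk(\sqrt{q_2})$; re-running the non-principality argument of \cite{AZT14-3} shows that the relevant products of $\mathcal{I}$ with $\h,\hh,\Q_1,\Q_2$ are non-principal in $\kk$.

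Next I would transfer the four cases one at a time. Applying Theorem~\ref{235} to the relabeled data sends its case~1 (hypotheses ``$a\pm1$ a square and $2q_1(x\pm1)$ not'') to the present case~1 (``$a\pm1$ a square and $2q_2(x\pm1)$ not''), with $a$ now the rational part of $\epsilon_{pq_1}$ and $\Q_1$ replaced by $\Q_2$, yielding $\kappa_{\KK_3}=\langle[\Q_2],[\mathcal{I}]\rangle$ or $\langle[\Q_2],[\h\mathcal{I}]\rangle$. Case~2 maps to case~2 verbatim. In case~3 the identity $[\Q_2]=[\h\hh]$ found for $\KK_2$ becomes $[\Q_1]=[\h\hh]$, giving $\kappa_{\KK_3}=\langle[\Q_1]\rangle$, and case~4 gives $\kappa_{\KK_3}=\langle[\Q_2]\rangle$; these are exactly assertions $1$--$4$. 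Since $\h,\hh$ lie over $p$, which is fixed, the distinguished classes $[\h\hh]$ and the cardinalities $|\kappa_{\KK_3}|$ coming from Theorem~\ref{226}(3) match those predicted for $\KK_2$ under the substitution.

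The point requiring genuine care --- the only place where one cannot simply invoke the relabeling --- is re-running the non-capitulation computations from the proof of Theorem~\ref{235} with the unit system of $\KK_3$. For instance, in case~1 one must show that $\h$ does not capitulate in $\KK_3$; in the $\KK_2$-setting this was done by writing $\pi_1\epsilon=\alpha^2$ with $\alpha\in\KK_2$ and a unit $\epsilon$, noting $N_{\KK_2/\kk}(\epsilon)\in E_{\kk}=\langle i,\epsilon_{pq_1q_2}\rangle$, and excluding all eight values $\pm1,\pm i,\pm\epsilon_{pq_1q_2},\pm i\epsilon_{pq_1q_2}$. I would reproduce this with the fundamental system of units of $\KK_3$ given by Proposition~\ref{32}, the new generator being $\sqrt{i\epsilon_{q_2}}$ in place of $\sqrt{i\epsilon_{q_1}}$. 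The crucial compatibility is that $N_{\KK_3/\kk}(\sqrt{i\epsilon_{q_2}})$ is again $\pm i$, which holds because $q_2\equiv-1\pmod4$ forces $N(\epsilon_{q_2})=1$ exactly as $q_1\equiv-1\pmod4$ forced $N(\epsilon_{q_1})=1$; together with $\sqrt p\notin\KK_3^+$ and $\sqrt i,\sqrt{\epsilon_{pq_1q_2}},\sqrt{i\epsilon_{pq_1q_2}}\notin\kk$, this makes the list of admissible norm values --- and hence every contradiction --- identical to the $\KK_2$ case. The bulk of the work is therefore this verification of compatibility; once it is in place, all four assertions follow, and I would close by recording the numerical examples confirming each case, as in the $\KK_2$ subsection.
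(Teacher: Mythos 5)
Your proposal is correct and follows exactly the paper's own route: the paper proves Theorem~\ref{237} precisely by invoking the symmetry $q_1\leftrightarrow q_2$ (swapping $\KK_2\leftrightarrow\KK_3$, $\Q_1\leftrightarrow\Q_2$, $\epsilon_{pq_2}\leftrightarrow\epsilon_{pq_1}$) to transfer Theorem~\ref{235}, using the ideal $\mathcal{I}$ above a prime $\ell\equiv1\pmod4$ with $\left(\frac{pq_1q_2}{\ell}\right)=-\left(\frac{q_2}{\ell}\right)=1$. In fact you are more thorough than the paper, which states the deduction ``by analogy'' without spelling out the compatibility checks (Proposition~\ref{32} as the image of Proposition~\ref{31}, the norm computation with $\sqrt{i\epsilon_{q_2}}$) that you carry out.
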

\subsection{Capitulation in $\k$}~\\
The following theorem is a simple deduction from Theorems \ref{233}, \ref{235} and \ref{237}.
\begin{The}
Let  $p$,  $q_1$ and $q_2$ be different primes satisfying  $p\equiv-q_1\equiv-q_2\equiv1 \pmod 4$. Put $\kk=\QQ(\sqrt{pq_1q_2}, i)$ and denote by $\k$ its genus field. Let $\epsilon_{pq_1q_2}=x+y\sqrt{pq_1q_2}$ be the fundamental unit of   $\QQ(\sqrt{pq_1q_2})$.
 \begin{enumerate}[\rm1.]
 \item Assume $p\equiv1\pmod8$, then there exists an unambiguous ideal  $\mathcal{I}$ of  $\kk/\QQ(i)$ of order $2$ such that:
\begin{enumerate}[\rm i.]
   \item If  $2p(x\pm1)$ is a square in $\NN$, then $\langle[\h], [\Q_1],  [\mathcal{I}]\rangle\subseteq \kappa_{\G}$.
  \item Else,  $\langle[\h], [\hh],  [\mathcal{I}]\rangle\subseteq \kappa_{\G}$.
  \end{enumerate}
\item Assume $p\equiv5\pmod8$.
  \begin{enumerate}[\rm i.]
   \item If  $2p(x\pm1)$ is a square in $\NN$, then $\langle[\h], [\Q_1]\rangle\subseteq \kappa_{\G}$.
  \item Else,  $\langle[\h], [\hh]\rangle\subseteq \kappa_{\G}$.
  \end{enumerate}
\end{enumerate}
\end{The}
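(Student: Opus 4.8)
The whole statement is a repackaging of Theorems \ref{233}, \ref{235} and \ref{237} together with one elementary observation about towers of fields, so the plan is to set up that observation once and then read off each case.

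\emph{Step 1 (capitulation is inherited by $\G$).} Since $\G=\QQ(\sqrt p,\sqrt{q_1},\sqrt{q_2},i)$ contains each of the three fields $\KK_1=\kk(\sqrt p)$, $\KK_2=\kk(\sqrt{q_1})$ and $\KK_3=\kk(\sqrt{q_2})$, any ideal of $\kk$ that becomes principal in some $\KK_j$ stays principal in $\G$: if $\mathfrak a\,\OO_{\KK_j}=(\alpha)$ then $\mathfrak a\,\OO_{\G}=(\alpha)\OO_{\G}$. Hence $\kappa_{\KK_j}\subseteq\kappa_{\G}$ for $j=1,2,3$, and because $\kappa_{\G}$ is a subgroup of $\mathbf{C}l_2(\kk)$ we obtain $\langle\kappa_{\KK_1},\kappa_{\KK_2},\kappa_{\KK_3}\rangle\subseteq\kappa_{\G}$. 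The theorem therefore follows once the subgroup on the left is shown to contain the claimed generators in each case.

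\emph{Step 2 (the strongly ambiguous generators).} By Theorem \ref{233}, $\kappa_{\KK_1}=\langle[\h],[\Q_1]\rangle$ when $2p(x\pm1)$ is a square in $\NN$ and $\kappa_{\KK_1}=\langle[\h],[\hh]\rangle$ otherwise; these are precisely the generating sets of $\mathrm{Am}_s(\kk/\QQ(i))$ produced in Proposition \ref{248}. Thus $\mathrm{Am}_s(\kk/\QQ(i))=\kappa_{\KK_1}\subseteq\kappa_{\G}$. This already settles assertion $2$ completely (the case $p\equiv5\pmod8$, where $\mathrm{Am}=\mathrm{Am}_s$) and delivers the two strongly ambiguous generators of assertion $1$.

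\emph{Step 3 (the extra class $[\mathcal{I}]$ when $p\equiv1\pmod8$).} Here $r=\mathrm{rank}\,\mathbf{C}l_2(\kk)=3$ and, by Proposition \ref{248}, $\mathrm{Am}(\kk/\QQ(i))$ arises from $\mathrm{Am}_s(\kk/\QQ(i))$ by adjoining one further ambiguous class $[\mathcal{I}]$ of order $2$, with $\mathcal{I}$ unambiguous and lying above a rational prime $\ell$ that splits completely in $\kk$ (Lemma \ref{3:115}). I would take $\mathcal{I}$ to be the very ideal used in Theorems \ref{235} and \ref{237}. In the relevant sub-cases those theorems place into $\kappa_{\KK_2}$ (resp. $\kappa_{\KK_3}$) a class of the form $[\mathcal{I}]$ or $[\h\mathcal{I}]$; since $[\h]\in\kappa_{\G}$ by Step $2$, multiplying by $[\h]^{-1}$ yields $[\mathcal{I}]\in\kappa_{\G}$. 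Combined with Step $2$ this gives $\langle[\h],[\Q_1],[\mathcal{I}]\rangle\subseteq\kappa_{\G}$ (resp. $\langle[\h],[\hh],[\mathcal{I}]\rangle\subseteq\kappa_{\G}$), which is assertion $1$.

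\emph{Main obstacle.} The only point that is not automatic is Step $3$: one must guarantee that for every admissible triple with $p\equiv1\pmod8$ the chosen $\mathcal{I}$ really does capitulate in $\KK_2$ or $\KK_3$, i.e. that the norm-residue conditions on $a\pm1$ appearing in Theorems \ref{235} and \ref{237} (where $\epsilon_{pq_2}=a+b\sqrt{pq_2}$, resp. $\epsilon_{pq_1}=a+b\sqrt{pq_1}$) that place $\mathcal{I}$ inside $\kappa_{\KK_2}$ or $\kappa_{\KK_3}$ are compatible with $2p(x\pm1)$ being (or not being) a square. Concretely, I would use Lemma \ref{3:115} to fix $\ell$ with the splitting prescribed just before Theorem \ref{235} (so that $\mathcal{I}$ is inert in the field where it capitulates), and then check, case by case, that the generator actually appearing ($[\mathcal{I}]$, $[\h\mathcal{I}]$, $[\mathcal{I}\hh]$ or $[\mathcal{I}\h\hh]$) differs from $[\mathcal{I}]$ only by strongly ambiguous classes already known to lie in $\kappa_{\G}$. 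This bookkeeping — extracting $[\mathcal{I}]$ modulo $\mathrm{Am}_s(\kk/\QQ(i))$ and confirming it is reached on at least one of the two sides $\KK_2,\KK_3$ — is the only real work; everything else is the inheritance recorded in Step $1$.
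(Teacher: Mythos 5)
Your Steps 1 and 2 are correct and are exactly what the paper itself intends: its entire ``proof'' is the sentence announcing the theorem as a simple deduction from Theorems \ref{233}, \ref{235} and \ref{237}, and the deduction works as you describe for everything except the class $[\mathcal{I}]$. Capitulation is inherited along $\kk\subset\KK_j\subset\G$, and Theorem \ref{233} already gives $\mathrm{Am}_s(\kk/\QQ(i))=\kappa_{\KK_1}\subseteq\kappa_{\G}$, which settles assertion 2 completely and produces the strongly ambiguous generators in assertion 1.

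The genuine gap is your Step 3, which you flag as the ``main obstacle'' but do not close --- and it cannot be closed by the bookkeeping you outline, because the hypothesis $p\equiv1\pmod 8$ does not force the sub-cases of Theorems \ref{235} and \ref{237} in which a class involving $\mathcal{I}$ capitulates. A class of the form $[\mathcal{I}]$, $[\h\mathcal{I}]$, $[\hh\mathcal{I}]$ or $[\h\hh\mathcal{I}]$ lies in $\kappa_{\KK_2}$ only in cases 1 and 2 of Theorem \ref{235}, i.e.\ only when $a\pm1$ is a square in $\NN$ for $\epsilon_{pq_2}=a+b\sqrt{pq_2}$ (similarly for $\kappa_{\KK_3}$ with $\epsilon_{pq_1}$); in cases 3 and 4 those kernels consist of strongly ambiguous classes only. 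The unnumbered lemma before Theorem \ref{235} gives the implication ``$a\pm1$ square $\Rightarrow p\equiv1\pmod 8$'', not its converse, and the converse genuinely fails when $\left(\frac{q_j}{p}\right)=1$. Concretely, take $p=17$, $q_1=19$, $q_2=47$, so $p\equiv1\pmod 8$ and $\left(\frac{q_1}{p}\right)=\left(\frac{q_2}{p}\right)=1$. Then $\epsilon_{pq_1}=\epsilon_{323}=18+\sqrt{323}$ has $a=18$, with $a+1=19$ and $a-1=17$, and $\epsilon_{pq_2}=\epsilon_{799}=424+15\sqrt{799}$ has $a=424$, with $a+1=5^2\cdot17$ and $a-1=3^2\cdot 47$; in both fields $a\pm1$ is not a square (instead $p(a\mp1)$ is), so Theorems \ref{235} and \ref{237} place only ambiguous classes in $\kappa_{\KK_2}$ and $\kappa_{\KK_3}$. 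Consequently $\langle\kappa_{\KK_1},\kappa_{\KK_2},\kappa_{\KK_3}\rangle\subseteq\mathrm{Am}_s(\kk/\QQ(i))$, which contains no unambiguous class whatsoever, and assertion 1 of the theorem simply does not follow from the three cited theorems for such triples; establishing it there (if it is true) would require an argument about the degree-$4$ extension $\G/\kk$ itself, not inheritance from its quadratic subextensions. (To be fair, the paper's one-line proof has the same defect; your proposal has the merit of making the missing step visible, but it does not supply it.)
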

\section{\bf{Application}}
Let $p\equiv-q_1\equiv-q_2\equiv1 \pmod 4$ be different primes such that  $\mathbf{C}l_2(\mathds{\kk})$ is of type $(2, 2, 2)$.
According to  \cite{AzTa-08}, $\mathbf{C}l_2(\kk)$  is of type $(2, 2, 2)$ if and only if  $p$, $q_1$ and $q_2$  satisfy the   following two conditions:
\begin{enumerate}[\rm A:]
 \item      $p\equiv -q_1\equiv -q_2\equiv1 \pmod4$ and  $\left(\frac{2}{p}\right)=\left(\frac{q_1}{q_2}\right)=-\left(\frac{q_2}{q_1}\right)=1$.
\item     One of the following three conditions is satisfied:
 \begin{enumerate}[\upshape\indent(I):]
   \item  $\left(\frac{p}{q_1}\right)\left(\frac{p}{q_2}\right)=-1$ and $\left(\frac{2}{q_1}\right)=\left(\frac{2}{q_2}\right)=-1$.
   \item $\left(\frac{p}{q_1}\right)\left(\frac{p}{q_2}\right)=-1$,  $\left(\frac{2}{q_1}\right)=1$ and $\left(\frac{2}{q_2}\right)=-1$.
   \item $\left(\frac{p}{q_1}\right)=\left(\frac{p}{q_2}\right)=-1$ and $\left(\frac{2}{q_1}\right)\left(\frac{2}{q_2}\right)=-1$.
 \end{enumerate}
 \end{enumerate}
 \begin{rema}
We  keep the notations defined in \cite[Definition 1]{ZAT-15}, and we add the following definition assuming  $p\equiv -q_1\equiv -q_2\equiv1 \pmod4$ satisfying the condition A.
\begin{enumerate}[\rm1.]
  \item  $p$, $q_1$ and $q_2$ are said of  type $B(III)(1)$ if    $\left(\frac{p}{q_1}\right)=\left(\frac{p}{q_2}\right)=-1$ and $-\left(\frac{2}{q_1}\right)=\left(\frac{2}{q_2}\right)=1$.
  \item $p$, $q_1$ and $q_2$ are said of  type $B(III)(2)$ if    $\left(\frac{p}{q_1}\right)=\left(\frac{p}{q_2}\right)=-1$ and $\left(\frac{2}{q_1}\right)=-\left(\frac{2}{q_2}\right)=1$.
\end{enumerate}
\end{rema}
To continue we need the following results.
\begin{lem}\label{11}
Let $p\equiv-q_1\equiv-q_2\equiv1 \pmod 4$ be different primes satisfying the condition A, and put $\epsilon_{pq_2}=a+b\sqrt{pq_2}$.
\begin{enumerate}[\rm1.]
  \item If  $p$, $q_1$ and $q_2$ are of  type $B(I)$ or  $B(II)$,  then  $a+1$ is not a  square in $\NN$.
    \item If  $p$, $q_1$ and $q_2$ are of  type $B(I)(1)$ or  $B(II)(1)$,  then  $p(a-1)$ and $2p(a+1)$ are not  squares in $\NN$.
  \item If  $p$, $q_1$ and $q_2$ are of  type $B(I)(2)$ or  $B(II)(2)$,  then  $p(a+1)$ and $2p(a-1)$ are not  squares in $\NN$.
   \item If  $p$, $q_1$ and $q_2$ are of  type $B(III)(1)$,  then  $a-1$ and $p(a+1)$ are  not  squares in $\NN$.
  \item If  $p$, $q_1$ and $q_2$ are of  type  $B(III)(2)$,  then  $a+1$ and $p(a-1)$ are not   squares in $\NN$.
  \item If  $p$, $q_1$ and $q_2$ are of  type  $B(III)$,  then  $2p(a+1)$ is not a square in $\NN$.
\end{enumerate}
\end{lem}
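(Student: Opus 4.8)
The plan is to convert the hypotheses on the residue symbols into statements about the factorization of $a^2-1$, and conversely to read off from each admissible factorization a necessary condition on $\left(\frac{2}{q_2}\right)$ and $\left(\frac{p}{q_2}\right)$. Since only $p$ and $q_2$ occur in $\epsilon_{pq_2}$, the conditions on $q_1$ will enter solely through the definition of the type.

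First I would fix the arithmetic setup. Because the prime $q_2\equiv-1\pmod 4$ divides $pq_2$, the number $-1$ is not a norm from $\QQ(\sqrt{pq_2})$, so $N(\epsilon_{pq_2})=1$ and hence $a^2-1=(a-1)(a+1)=b^2pq_2$. Exactly as in the proof of Proposition \ref{31} (where Lemma \ref{1:046} is used to discard the patterns in which $2(a\pm1)$ or $2pq_2(a\pm1)$ is a square), the only possibilities are that one of $a\pm1$, $p(a\pm1)$, $2p(a\pm1)$ is a square in $\NN$, arising respectively from $a\pm1=b_1^2,\ a\mp1=b_2^2pq_2$; from $a\pm1=b_1^2p,\ a\mp1=b_2^2q_2$; and from $a\pm1=2b_1^2p,\ a\mp1=2b_2^2q_2$. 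It therefore suffices, for each type, to exclude the factorizations that type forbids.

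Second, the key step attaches a residue condition to each factorization. Writing $\square$ for a square in $\NN$, I would subtract the two equations of a given pattern (obtaining $\pm2$, or $\pm1$ after dividing by $2$ in the last pattern) and reduce modulo $p$ and modulo $q_2$. Condition A gives $p\equiv1\pmod 8$, whence $\left(\frac{-1}{p}\right)=\left(\frac{2}{p}\right)=1$; moreover $q_2\equiv-1\pmod 4$ gives $\left(\frac{-1}{q_2}\right)=-1$, and reciprocity gives $\left(\frac{p}{q_2}\right)=\left(\frac{q_2}{p}\right)$. These reductions produce
\[
\begin{array}{rcl}
a+1=\square &\Rightarrow& \left(\tfrac{2}{q_2}\right)=1,\\
a-1=\square &\Rightarrow& \left(\tfrac{2}{q_2}\right)=-1,\\
p(a+1)=\square &\Rightarrow& \left(\tfrac{p}{q_2}\right)=1 \text{ and } \left(\tfrac{2}{q_2}\right)=1,\\
p(a-1)=\square &\Rightarrow& \left(\tfrac{p}{q_2}\right)=1 \text{ and } \left(\tfrac{2}{q_2}\right)=-1,\\
2p(a+1)=\square &\Rightarrow& \left(\tfrac{p}{q_2}\right)=1,\\
2p(a-1)=\square &\Rightarrow& \text{a contradiction.}
\end{array}
\]
In particular $2p(a-1)$ is never a square, since it would force both $\left(\frac{q_2}{p}\right)=1$ and $\left(\frac{p}{q_2}\right)=-1$. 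Then I would finish each part by inserting the symbol values prescribed by the type, recalling the definitions of the subtypes from \cite{ZAT-15}: types $B(\mathrm{I})$, $B(\mathrm{II})$ have $\left(\frac{2}{q_2}\right)=-1$, with subtype $(1)$ resp. $(2)$ fixing $\left(\frac{p}{q_2}\right)=-1$ resp. $+1$, while $B(\mathrm{III})$ has $\left(\frac{p}{q_2}\right)=-1$, with $\left(\frac{2}{q_2}\right)=+1$ in subtype $(1)$ and $=-1$ in subtype $(2)$. For example part 1 follows because $\left(\frac{2}{q_2}\right)=-1$ excludes ``$a+1=\square$''; part 2 because $\left(\frac{p}{q_2}\right)=-1$ additionally excludes ``$p(a-1)=\square$'' and ``$2p(a+1)=\square$''; part 6 because $\left(\frac{p}{q_2}\right)=-1$ excludes ``$2p(a+1)=\square$''; and parts 3, 4, 5 follow likewise by matching the prescribed pair $\big(\left(\frac{2}{q_2}\right),\left(\frac{p}{q_2}\right)\big)$ against the table (noting that ``$2p(a-1)=\square$'' is excluded unconditionally and ``$p(a+1)=\square$'' already by $\left(\frac{2}{q_2}\right)=-1$).

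The only delicate point, hence the main obstacle, is the bookkeeping: keeping the signs straight in the reductions modulo $p$ and modulo $q_2$, correctly pairing each ``$\pm$'' factorization with the quantity asserted in the statement, and translating the subtype definitions of \cite{ZAT-15} into the pair of $q_2$-symbols $\left(\frac{2}{q_2}\right),\left(\frac{p}{q_2}\right)$ that actually governs $\epsilon_{pq_2}$. No genuinely new idea beyond this is needed, since existence of a factorization is not required — only the necessary-condition direction is used to rule cases out.
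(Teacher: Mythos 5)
Your proof is correct and takes essentially the same route as the paper's: both factor $a^2-1=b^2pq_2$ (using $N(\epsilon_{pq_2})=1$, Lemma \ref{1:046} and unique factorization) into the three sign-patterns $a\pm1=b_1^2$, $a\pm1=pb_1^2$, $a\pm1=2pb_1^2$, reduce modulo $q_2$ (and modulo $p$ with reciprocity where needed) to attach conditions on $\left(\frac{2}{q_2}\right)$ and $\left(\frac{p}{q_2}\right)$ to each pattern, and then contradict the symbol values prescribed by each type; the paper works out only the case $a+1=b_1^2$, $a-1=pq_2b_2^2$ explicitly and declares the rest "similar". Your complete implication table (including the observation that $2p(a-1)$ can never be a square under condition A) is a correct and fuller version of that same argument, and your reading of the subtype conventions from \cite{ZAT-15} is the one consistent with the statement and with the paper's later use of this lemma in Theorem \ref{7:009}.
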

\begin{proof}
We know that  $N(\epsilon_{pq_2})=1$, then $a^2-1=b^2pq_2$, hence by  Lemma \ref{1:046} and the decomposition uniqueness in $\ZZ$  there exist $b_1$, $b_2$ in $\ZZ$ such that:
\begin{center}
   (1) $\left\{\begin{array}{ll}
 a\pm1=b_1^2,\\
 a\mp1=pq_2b_2^2;
 \end{array}\right.$ or (2)
    $\left\{\begin{array}{ll}
 a\pm1=pb_1^2,\\
 a\mp1=q_2b_2^2;
 \end{array}\right.$ or (3)
    $\left\{\begin{array}{ll}
 a\pm1=2pb_1^2,\\
 a\mp1=2q_2b_2^2;
 \end{array}\right.$
 \end{center}
1.  Suppose $$\left\{\begin{array}{ll}
 a+1=b_1^2,\\
 a-1=pq_2b_2^2,
 \end{array}\right.$$
 then $\left(\frac{2}{q_2}\right)=1$, but this contradicts the conditions  $B(I)$ and  $B(II)$, hence the result.

The other cases are checked similarly.
\end{proof}
\begin{rema}
If $\mathbf{C}l_2(\mathds{\kk})$ is of type $(2, 2, 2)$, then
by Proposition \ref{248} and  \cite[Lemma 3]{ZAT-15}, we deduce that:
\begin{enumerate}[\rm1.]
  \item $\mathrm{Am}_s(\kk/\QQ(i))=\langle[\h], [\mathcal{Q}_1]\rangle\varsubsetneq\mathrm{Am}(\kk/\QQ(i))=\mathbf{C}l_2(\kk)=\langle[\h], [\mathcal{Q}_1], [\mathcal{I}]\rangle$,  if $p$, $q_1$ and $q_2$ are of type $B(III)$,
  \item  $\mathrm{Am}_s(\kk/\QQ(i))=\langle[\h], [\hh]\rangle\varsubsetneq\mathrm{Am}(\kk/\QQ(i))=\mathbf{C}l_2(\kk)=\langle[\mathcal{H}_1], [\mathcal{H}_2], [\mathcal{I}]\rangle$, otherwise.
\end{enumerate}
\end{rema}
\begin{The}\label{7:009}
Let $p\equiv-q_1\equiv-q_2\equiv1 \pmod 4$ be different primes such that  $\mathbf{C}l_2(\mathds{\kk})$ is of type $(2, 2, 2)$, where
  $\kk=\QQ(\sqrt{pq_1q_2}, i)$.
  \begin{enumerate}[\rm1.]
\item Exactly four classes  of $\mathbf{C}l_2(\kk)$ capitulate in  $\KK_1$.
\begin{enumerate}[\rm i.]
 \item If $p$,  $q_1$ and $q_2$ are of type $B(III)$, then $\kappa_{\KK_1}=\langle[\h], [\mathcal{Q}_1]\rangle$.
 \item Else,  $\kappa_{\KK_1}=\langle[\h], [\hh]\rangle$.
 \end{enumerate}
 \item Put $\epsilon_{pq_2}=a+b\sqrt{pq_2}$, then the capitulation in $\KK_2$ is given by:
\begin{enumerate}[\rm i.]
\item If  $p$,  $q_1$ and $q_2$ are of type $B(I)(1)$ or $B(II)(1)$, then $\kappa_{\KK_2}=\langle[\mathcal{I}]\rangle$ or $\langle[\h\mathcal{I}]\rangle$ or $\langle[\hh\mathcal{I}]\rangle$ or $\langle[\h\hh\mathcal{I}]\rangle$.
\item If  $p$,  $q_1$ and $q_2$ are of type $B(I)(2)$ or $B(II)(2)$, then
\begin{enumerate}[\rm a.]
\item If  $a-1$ is a square in $\NN$, then $\kappa_{\KK_2}=\langle[\h\hh], [\mathcal{I}]\rangle$ or
 $\langle[\h\hh], [\h\mathcal{I}]\rangle$.
\item Else,   $\kappa_{\KK_2}=\langle[\Q_1]\rangle=\langle[\h\hh]\rangle$.
\end{enumerate}
\item If $p$,  $q_1$ and $q_2$ are of type  $B(III)$, then
 \begin{enumerate}[\rm a.]
\item If  $a\pm1$ is a square in $\NN$, then  $\kappa_{\KK_2}=\langle[\Q_1], [\mathcal{I}]\rangle$ or $\langle[\Q_1], [\h\mathcal{I}]\rangle$.
\item Else,   $\kappa_{\KK_2}=\langle[\Q_1]\rangle$.
\end{enumerate}
\end{enumerate}
\item Put $\epsilon_{pq_1}=a+b\sqrt{pq_1}$, then the capitulation in $\KK_3$ is given by:
\begin{enumerate}[\rm i.]
\item If  $p$,  $q_1$ and $q_2$ are of type $B(I)(2)$ ou $B(II)(2)$, then $\kappa_{\KK_3}=\langle[\mathcal{I}]\rangle$ or $\langle[\h\mathcal{I}]\rangle$ or $\langle[\hh\mathcal{I}]\rangle$ or $\langle[\h\hh\mathcal{I}]\rangle$.
\item If  $p$,  $q_1$ and $q_2$ are of type $B(I)(1)$ or $B(II)(1)$, then
\begin{enumerate}[\rm a.]
\item If  $a-1$ is a square in $\NN$, then $\kappa_{\KK_3}=\langle[\h\hh], [\mathcal{I}]\rangle$ or $\langle[\h\hh], [\h\mathcal{I}]\rangle$.
\item Else,   $\kappa_{\KK_3}=\langle[\Q_2]\rangle=\langle[\h\hh]\rangle$.
\end{enumerate}
\item If  $p$,  $q_1$ and $q_2$ are of type  $B(III)$, then
 \begin{enumerate}[\rm a.]
\item If  $a\pm1$ is a square in $\NN$, then  $\kappa_{\KK_3}=\langle[\Q_2], [\mathcal{I}]\rangle$ or $\langle[\Q_2], [\h\mathcal{I}]\rangle$.
\item Else,   $\kappa_{\KK_2}=\langle[\Q_2]\rangle$.
\end{enumerate}
\end{enumerate}
 \end{enumerate}
\end{The}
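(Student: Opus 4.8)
The plan is to derive the whole statement by specializing the general capitulation theorems already proved, Theorems \ref{233}, \ref{235} and \ref{237}, to the hypothesis $\mathbf{C}l_2(\kk)\simeq(2,2,2)$. By \cite{AzTa-08} this hypothesis is equivalent to conditions A and B, so the entire argument reduces to extracting, for each admissible type, two pieces of arithmetic data: first, which of $2p(x\pm1)$, $2q_1(x\pm1)$, $2q_2(x\pm1)$ is a square in $\NN$ --- exactly one of them is, by the factorization analysis of $x^2-1=y^2pq_1q_2$ performed via Lemmas \ref{1:046} and \ref{3:105} (cf.\ the proof of Proposition \ref{31}); and second, the square class of $a\pm1$, where $\epsilon_{pq_2}=a+b\sqrt{pq_2}$ when $\KK=\KK_2$ and $\epsilon_{pq_1}=a+b\sqrt{pq_1}$ when $\KK=\KK_3$. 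Once both data are in hand, each displayed kernel is read off directly from Theorems \ref{233}, \ref{235}, \ref{237}.

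First I would dispose of the parts that translate without effort. By Proposition \ref{248} together with \cite[Lemma 3]{ZAT-15}, as recorded in the Remark preceding the theorem, $2p(x\pm1)$ is a square in $\NN$ precisely when $p,q_1,q_2$ are of type $B(III)$; in the remaining types it is $2q_1(x\pm1)$ or $2q_2(x\pm1)$ that is the square. Substituting the type-$B(III)$ alternative into Theorem \ref{233}(1) gives $\kappa_{\KK_1}=\langle[\h],[\Q_1]\rangle$ and the complementary alternative into Theorem \ref{233}(2) gives $\kappa_{\KK_1}=\langle[\h],[\hh]\rangle$, while Theorem \ref{226}(1) supplies $|\kappa_{\KK_1}|=4$; this settles part 1. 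The type-$B(III)$ cases of parts 2 and 3 are equally immediate: here $2p(x\pm1)$ is the square, so neither $2q_1(x\pm1)$ nor $2q_2(x\pm1)$ is, and Theorems \ref{235}, \ref{237} collapse to their cases 1 and 4, which are exactly the sub-alternatives ``$a\pm1$ a square'' and ``$a\pm1$ not a square'' displayed in 2(iii) and 3(iii).

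The substance lies in types $B(I)$ and $B(II)$, where both data require work. For the first, I would match the subtype to the trichotomy through \cite[Lemma 3]{ZAT-15}: subtype $(1)$ forces $2q_1(x\pm1)$ to be the square and subtype $(2)$ forces $2q_2(x\pm1)$. By Lemma \ref{7} and \cite[Proposition 2]{AZT12-2} the first alternative yields the principality identity $[\Q_2]=[\h\hh]$ and the second $[\Q_1]=[\h\hh]$, which is what converts the $\Q_j$-generators produced by Theorems \ref{235}, \ref{237} into the $\h\hh$-generators appearing in 2(ii) and 3(ii). For the second datum I would start from Lemma \ref{11}, which already excludes several of the factorizations of $a^2-1=b^2pq_2$ permitted by Lemma \ref{1:046}, and then use the residue conditions defining the subtype in \cite[Definition 1]{ZAT-15} to narrow $a\pm1$ to a single square class: for the subtype adapted to $\KK_2$ this forces $a-1$ to be a square (recall $a+1$ is never a square here by Lemma \ref{11}(1)), placing us in Theorem \ref{235}(2), i.e.\ the $\langle[\mathcal{I}]\rangle$-family of 2(i); for the opposite subtype $2q_1(x\pm1)$ is not a square and the explicit dichotomy ``$a-1$ a square or not'' lands us in Theorem \ref{235}(1) or (4), whose outputs $\langle[\Q_1],\ast\rangle$ and $\langle[\Q_1]\rangle$ are rewritten by $[\Q_1]=[\h\hh]$ as the answers of 2(ii). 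Part 3 is obtained verbatim under the symmetry $q_1\leftrightarrow q_2$, $\epsilon_{pq_2}\leftrightarrow\epsilon_{pq_1}$, replacing Theorem \ref{235} by Theorem \ref{237}.

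The main obstacle is precisely the square-class bookkeeping of the second determination: one must check that Lemma \ref{11}, the factorization constraints of Lemmas \ref{1:046} and \ref{3:105}, and the quadratic-residue conditions of A and B are jointly consistent and together pin $a\pm1$ into a unique class across all of $B(I)(1),(2)$, $B(II)(1),(2)$ and $B(III)(1),(2)$, while simultaneously confirming which of $2q_1(x\pm1)$, $2q_2(x\pm1)$ is the square. No single step is deep, but a single slip in the Legendre-symbol analysis would misplace one into the wrong case of Theorem \ref{235} or \ref{237}; once all these determinations are secured, every equality in the statement is an immediate substitution.
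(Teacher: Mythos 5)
Your proposal is correct and follows essentially the same route as the paper's own proof: both arguments specialize Theorems \ref{233}, \ref{235} and \ref{237} by using \cite[Lemma 3]{ZAT-15} to decide which of $2p(x\pm1)$, $2q_1(x\pm1)$, $2q_2(x\pm1)$ is a square in $\NN$ for each type, Lemma \ref{11} to pin down the square class of $a\pm1$, and the propositions of \cite{AZT12-2} to convert $[\Q_j]$-generators into $[\h\hh]$-generators, with part 3 deduced by the $q_1\leftrightarrow q_2$ symmetry. The only cosmetic difference is that you cite Proposition \ref{248} and Theorem \ref{226} explicitly where the paper leaves them implicit.
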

\begin{proof} Let $\epsilon_{pq_1q_2}=x+y\sqrt{pq_1q_2}$ denote the fundamental unit of $\QQ(\sqrt{pq_1q_2})$.
\begin{enumerate}[\rm1.]
 \item We know, by \cite[Lemma 3]{ZAT-15}, that if  $p$,  $q_1$ and $q_2$ are of  type  $B(III)$, then $2p(x-1)$ is a square in $\NN$, and otherwise  $2p(x-1)$, $2p(x+1)$ are not squares in $\NN$. Thus   Theorem \ref{233}  implies  the results.
\item Put $\epsilon_{pq_2}=a+b\sqrt{pq_2}$.
 \begin{enumerate}[\rm i.]
 \item Suppose  $p$,  $q_1$ and $q_2$ satisfy the conditions  $A$ and  $B(I)(1)$ or $B(II)(1)$, then, by \cite[Lemma 3]{ZAT-15},  $2q_1(x+1)$ is a square in $\NN$. On the other hand, from  Lemma \ref{11},  $p(a-1)$ and  $2p(a+1)$ are not squares in  $\NN$, thus  $a-1$ is a square in $\NN$. Therefore, we are in the hypotheses of Theorem \ref{235}(2), thus the results.
 \item  Suppose  $p$,  $q_1$ and $q_2$ satisfy the conditions $A$ and  $B(I)(2)$ or $B(II)(2)$, then, by \cite[Lemma 3]{ZAT-15}, $2q_2(x-1)$ is a square in $\NN$ i.e. $2pq_1(x+1)$ is a square in $\NN$. Thus  \cite[Proposition 1]{AZT12-2} implies that $[\h\hh]=[Q_1]$.
 On the other hand, from  Lemma \ref{11}, one of the numbers  $a-1$,  $p(a-1)$ or   $2p(a+1)$ is  a square in  $\NN$. So we are in the hypotheses of Theorem \ref{235} (1) or (4), thus the results.
\item  Suppose  $p$,  $q_1$ and $q_2$ satisfy the conditions $A$ and  $B(III)$, then, by \cite[Lemma 3]{ZAT-15}, $2p(x-1)$ is a square in $\NN$, and by Lemma \ref{11}, $2p(a+1)$ is not a square in $\NN$.\\
    $\bullet$ If $p$,  $q_1$ and $q_2$ are of type   $B(III)(1)$, then Lemma \ref{11} implies that one of the numbers $a+1$, $p(a-1)$ or $2p(a-1)$ is a square in $\NN$.\\
    $\bullet$ If $p$,  $q_1$ and $q_2$ are of type   $B(III)(2)$, then Lemma \ref{11} implies that one of the numbers $a-1$, $p(a+1)$ or $2p(a-1)$ is a square in $\NN$.\\
    Therefore,\\
    a. If $a\pm1$ is a square in $\NN$, then the result is assured by Theorem \ref{235}(1).\\
    b. Else, the result is assured by Theorem \ref{235}(4).
  \end{enumerate}
 \item These results are shown as in 2.
\end{enumerate}
\end{proof}
\begin{coro}\label{18}
Keep the hypotheses and notations mentioned in  Theorem $\ref{7:009}$. Then all the classes of   $\mathbf{C}l_2(\kk)$ capitulate in $\G$ i.e.
$$\kappa_{\G}=\mathbf{C}l_2(\kk)=\mathrm{Am}(\kk/\QQ(i)).$$
\end{coro}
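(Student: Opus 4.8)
The plan is to exploit that $\KK_1,\KK_2,\KK_3$ are exactly the quadratic subextensions of $\G/\kk$, so that any ideal class becoming principal in one of them is \emph{a fortiori} principal in $\G$; concretely $\ker J_{\KK_j}\subseteq\ker J_{\G}$ because $J_{\G}$ factors through $J_{\KK_j}$. Hence
$$\kappa_{\G}\supseteq\langle\kappa_{\KK_1},\kappa_{\KK_2},\kappa_{\KK_3}\rangle,\qquad \kappa_{\G}\subseteq\mathbf{C}l_2(\kk).$$
Since the remark preceding Theorem \ref{7:009} gives $\mathbf{C}l_2(\kk)=\mathrm{Am}(\kk/\QQ(i))$ (both of order $8$, as the rank is $3$ when $p\equiv1\pmod 8$), it suffices to prove that the three capitulation kernels jointly generate all of $\mathbf{C}l_2(\kk)$.

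First I would record the common ground. By Theorem \ref{7:009}(1) together with that remark, $\kappa_{\KK_1}$ equals $\mathrm{Am}_s(\kk/\QQ(i))$ in every case (it is $\langle[\h],[\Q_1]\rangle$ in type $B(III)$ and $\langle[\h],[\hh]\rangle$ otherwise), a subgroup of order $4$, while $\mathbf{C}l_2(\kk)=\langle\mathrm{Am}_s(\kk/\QQ(i)),[\mathcal{I}]\rangle$. Thus the entire problem reduces to extracting, from $\kappa_{\KK_2}$ or $\kappa_{\KK_3}$, a single class lying outside $\mathrm{Am}_s(\kk/\QQ(i))$, i.e. a class congruent to $[\mathcal{I}]$ modulo $\mathrm{Am}_s(\kk/\QQ(i))$.

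Next I would read this off from Theorem \ref{7:009}(2),(3) type by type. In types $B(I)(1)$ and $B(II)(1)$, part (2)(i) shows $\kappa_{\KK_2}$ is generated by one of $[\mathcal{I}],[\h\mathcal{I}],[\hh\mathcal{I}],[\h\hh\mathcal{I}]$, each of which is the sought class; in types $B(I)(2)$ and $B(II)(2)$, part (3)(i) achieves the same through $\kappa_{\KK_3}$. In all four cases $\langle\kappa_{\KK_1},\kappa_{\KK_2},\kappa_{\KK_3}\rangle=\langle\mathrm{Am}_s(\kk/\QQ(i)),[\mathcal{I}]\rangle=\mathbf{C}l_2(\kk)$, so these types are settled immediately, and with them $\kappa_{\G}=\mathbf{C}l_2(\kk)$.

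The main obstacle is type $B(III)$. Parts (2)(iii) and (3)(iii) show that $\kappa_{\KK_2}$ supplies a class outside $\mathrm{Am}_s(\kk/\QQ(i))$ exactly when $a\pm1$ is a square for $\epsilon_{pq_2}$, and $\kappa_{\KK_3}$ does so exactly when $a\pm1$ is a square for $\epsilon_{pq_1}$; otherwise both kernels reduce to $\langle[\Q_1]\rangle$, respectively $\langle[\Q_2]\rangle$, which lie inside $\mathrm{Am}_s(\kk/\QQ(i))$ (being classes of strongly ambiguous ideals) and contribute nothing new. Hence the degenerate possibility to be excluded is that \emph{neither} $\epsilon_{pq_1}$ nor $\epsilon_{pq_2}$ has $a\pm1$ a perfect square; in that event the subfield bound would only yield $\kappa_{\G}\supseteq\mathrm{Am}_s(\kk/\QQ(i))$, which is too weak. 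Ruling this out is the crux. The plan here is to combine \cite[Lemma 3]{ZAT-15}, which makes $2p(x-1)$ a square and therefore forces $2q_1(x\pm1)$ and $2q_2(x\pm1)$ to be non-squares, with Lemma \ref{11} applied to the factorizations of $a^2-1=b^2pq_j$ under the Legendre-symbol conditions defining $B(III)(1)$ and $B(III)(2)$; a short reciprocity computation then shows the admissible factorization types of $\epsilon_{pq_1}$ and $\epsilon_{pq_2}$ cannot both avoid the ``$a\pm1$ is a square'' case. Once this is established, one of $\KK_2,\KK_3$ always contributes a class $\equiv[\mathcal{I}]$, whence $\langle\kappa_{\KK_1},\kappa_{\KK_2},\kappa_{\KK_3}\rangle=\mathbf{C}l_2(\kk)$ and finally $\kappa_{\G}=\mathbf{C}l_2(\kk)=\mathrm{Am}(\kk/\QQ(i))$, as claimed.
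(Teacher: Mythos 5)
Your overall route is the one the paper intends: the paper prints no proof of this corollary at all, presenting it as an immediate consequence of Theorem \ref{7:009} and the preceding remark via the inclusion $\langle\kappa_{\KK_1},\kappa_{\KK_2},\kappa_{\KK_3}\rangle\subseteq\kappa_{\G}$, and your treatment of types $B(I)$ and $B(II)$ through parts (2)(i) and (3)(i), together with the reduction to finding one class in the coset $[\mathcal{I}]\,\mathrm{Am}_s(\kk/\QQ(i))$, is exactly that argument. The genuine added value of your write-up is that you notice the deduction is \emph{not} immediate in type $B(III)$: taken at face value, subcases (2)(iii)(b) and (3)(iii)(b) of Theorem \ref{7:009} would leave the three kernels generating only $\mathrm{Am}_s(\kk/\QQ(i))$, of order $4$, and the subfield argument would stall. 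The paper never addresses this point.

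The exclusion you only sketch does go through, and in the strongest possible form: under conditions A and $B(III)$, \emph{neither} $\epsilon_{pq_1}$ nor $\epsilon_{pq_2}$ can avoid the case ``$a\pm1$ is a square'', so those two subcases of Theorem \ref{7:009} are in fact vacuous. Concretely, write $\epsilon_{pq_2}=a+b\sqrt{pq_2}$; by the case analysis in the proof of Lemma \ref{11} (resting on Lemma \ref{1:046}), exactly one of $a\pm1$, $p(a\pm1)$, $2p(a\pm1)$ is a square in $\NN$. In each of the last four cases the complementary factorization is $a\mp1=q_2b_2^2$ or $a\mp1=2q_2b_2^2$ with $a\equiv\pm1\pmod p$, so reduction modulo $p$ shows that $q_2$ times a square is congruent to one of $\pm1,\pm2$ modulo $p$; since condition A forces $p\equiv1\pmod 8$, all of $\left(\frac{\pm1}{p}\right),\left(\frac{\pm2}{p}\right)$ equal $1$, whence $\left(\frac{q_2}{p}\right)=1$, and quadratic reciprocity (with $p\equiv1\pmod4$) gives $\left(\frac{p}{q_2}\right)=1$, contradicting the condition $\left(\frac{p}{q_2}\right)=-1$ defining $B(III)$. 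The identical computation with $q_1$ in place of $q_2$ handles $\epsilon_{pq_1}$, since $B(III)$ also demands $\left(\frac{p}{q_1}\right)=-1$. (Note that \cite[Lemma 3]{ZAT-15} is not actually needed for this step; the Legendre-symbol conditions of A and $B(III)$ suffice, though it is what places you in case (1) rather than (2) of Theorems \ref{235} and \ref{237}.) With this short computation written out, your proof is complete, and it is more careful than the paper's own presentation.
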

\section{Acknowledgement}
We would like to thank the referee of our paper for his   precious remarks and suggestions.
{\small
}

\end{document}